\crefname{appsec}{Appendix}{Appendices}
\crefname{lemma}{Lemma}{Lemmata}
\Crefname{lemma}{Lemma}{Lemmata}
\crefname{thm}{theorem}{theorems}
\Crefname{thm}{Theorem}{Theorems}
\crefname{alg}{algorithm}{algorithms}
\Crefname{alg}{Algorithm}{Algorithms}
\crefname{table}{Table}{Tables}
\Crefname{table}{Table}{Tables}
\renewcommand{\vec}[1]{\boldsymbol{\mathbf{#1}}}
\newcommand{\mat}[1]{\boldsymbol{\mathbf{#1}}}
\newcommand{\Bezier}{{B\'{e}zier} }
\newcommand{\fspace}[1]{\mathcal{#1}}
\newcommand{\trans}{\mathrm{T}}
\newtheorem{theorem}{Theorem}[section]
\newtheorem{remark}[theorem]{Remark}
\newtheorem{lemma}[theorem]{Lemma}
\newtheorem{algorithm}[theorem]{Algorithm}
\date{}
\begin{document}
\begin{frontmatter}
\title{\Bezier projection: a unified approach for local projection
  and quadrature-free refinement and coarsening of NURBS and T-splines with particular
  application to isogeometric design and analysis}

\author[byu1]{D. C. Thomas\corref{cor1}}
\ead{dthomas@byu.edu}

\author[byu2]{M. A. Scott}

\author[cuboulder]{J. A. Evans}

\author[byu3]{K. Tew}

\author[byu4]{E. J. Evans}

\cortext[cor1]{Corresponding author}

\address[byu1]{Department of Physics and Astronomy,
  Brigham Young University,
  Provo, Utah 84602, USA}
\address[byu2]{Department of Civil and Environmental Engineering,
  Brigham Young University,
  Provo, Utah 84602, USA}

\address[cuboulder]{Department of Aerospace Engineering Sciences,
  University of Colorado, Boulder,
  Boulder, Colorado 80309, USA}

\address[byu3]{Department of Information Technology,
  Brigham Young University,
  Provo, Utah 84602, USA}
\address[byu4]{Department of Mathematics,
  Brigham Young University,
  Provo, Utah 84602, USA}

\begin{abstract}
  We introduce B\'{e}zier projection as an element-based local projection methodology for B-splines, NURBS, and T-splines.
This new approach relies on the concept of B\'{e}zier extraction and an associated operation introduced here, spline reconstruction, enabling the use of B\'{e}zier projection in standard finite element codes.
B\'{e}zier projection exhibits provably optimal convergence and yields projections that are virtually indistinguishable from global $L^2$ projection.
B\'{e}zier projection is used to develop a unified framework for spline operations including cell subdivision and merging, degree elevation and reduction, basis roughening and smoothing, and spline reparameterization.
In fact, B\'{e}zier projection provides a \emph{quadrature-free} approach to refinement and coarsening of splines.
In this sense, B\'{e}zier projection provides the fundamental building block for $hpkr$-adaptivity in isogeometric analysis.
\end{abstract}

\begin{keyword}
 \Bezier extraction, spline reconstruction, isogeometric analysis, local refinement and coarsening, local projection, quasi-interpolation
\end{keyword}

\end{frontmatter}
%\tableofcontents

\section{Introduction}
Projection is a ubiquitous operation in numerical analysis and scientific computing.  Consequently, there is a great need to develop projection technologies that are cheap, accurate, and reliable.
This is particularly true in isogeometric analysis where the spline basis may not be interpolatory.
In this paper we present \Bezier projection, a methodology for local projection (i.e., quasi-interpolation) onto and between spline
spaces. \Bezier projection converges optimally, is
virtually indistinguishable from global projection (which requires the
solution of a global system of equations), and provides a unified framework for
\emph{quadrature-free} refinement \textit{and} coarsening of B-splines, NURBS, and
T-splines. In particular \Bezier projection can accommodate
\begin{itemize}
\item (Cell) Subdivision or $h$-refinement,
\item (Cell) Merging or $h$-coarsening,
\item (Degree) Elevation or $p$-refinement,
\item (Degree) Reduction or $p$-coarsening,
\item (Basis) Roughening or $k$-refinement,
\item (Basis) Smoothing or $k$-coarsening,
\item Reparameterization or $r$-adaptivity.
\end{itemize}
These operations can be combined in a straightforward fashion to produce isogeometric
$hpkr$-adaptivity. Note that only $h$-refinement of T-splines has
appeared previously in the
literature~\cite{SeCaFiNoZhLy04,ScLiSeHu10}. \Bezier projection is an
extension of \Bezier
extraction~\cite{ScBoHu10,Borden:2010nx} in that it is derived entirely
in terms of \Bezier elements and element extraction operators. This
means it can be applied to existing finite 
element frameworks in straightforward fashion as an
\emph{element-level} technology. 
Additionally, we formulate \Bezier projection in terms of Kronecker
products facilitating its application in high-dimensional settings.

Potential applications of \Bezier projection are varied and include:
\begin{itemize}
\item Curve and surface fitting,
\item Mesh adaptivity,
\item Enforcement of boundary conditions, 
\item Solution methods with nonconforming meshes,
\item Multi-level solver technology, 
\item Data compression for image, signal, and data processing.
\end{itemize}

In the following, we briefly give a basic background on the building blocks of our approach, namely isogeometric analysis, NURBS and T-splines, and quasi-interpolation.  We additionally present a summary of our paper.

\subsection{Isogeometric analysis}
Isogeometric analysis~\cite{HuCoBa04,Cottrell:2009rp} is a
generalization of finite element analysis which improves the link 
between geometric design and analysis. The isogeometric paradigm is
simple: the smooth spline basis used to define the geometry is used as
the basis for analysis. As a result, exact 
geometry is introduced into the analysis. The 
smooth basis can be leveraged by the
analysis~\cite{EvBaBaHu09,HuEvRe13,CoHuRe07} leading to innovative approaches to 
model design~\cite{CoMaKiLyRi10,WaZhScHu11,LiZhHuScSe14},
analysis~\cite{SchDeScEvBoRaHu12,ScSiEvLiBoHuSe12,ScWuBl12,BeBaDeHsScHuBe09},
optimization~\cite{Wall08}, and
adaptivity~\cite{Bazilevs2009,DoJuSi09,ScThEv13, ScThEv13}.

Many of the early isogeometric developments were restricted to NURBS
but the use of T-splines as an isogeometric basis has gained
widespread attention across a number of
application areas~\cite{Bazilevs2009,ScBoHu10,ScLiSeHu10,Verhoosel:2010vn,
 Verhoosel:2010ly,BoScLaHuVe11,BeBaDeHsScHuBe09,SchDeScEvBoRaHu12,
 ScSiEvLiBoHuSe12,SiScTaThLi14,DiLoScWrTaZa13,HoReVeBo14,BaHsSc12,BuSaVa12,GiKoPoKaBeGeScHu14}.
Particular focus has been placed on the use of T-spline local refinement in an analysis
context~\cite{sc11,ScLiSeHu10,BoScLaHuVe11,Verhoosel:2010ly,Verhoosel:2010vn}. 

\subsection{B-splines, NURBS, T-splines, and more}
\Bezier curves and surfaces \cite{dec63,bez66,bez67}, B-splines
\cite{Bo72,Ri73}, and NURBS \cite{Ve75,Pi91,PiegTil97} have become the
standard for computer graphics and computer-aided design \cite{Pi91}. 
This ubiquity has driven the development of many spline-based
algorithms. Important examples include knot
insertion and knot removal \cite{CoLyRi80,GoLy93,PiegTil97,EcHa95} to
subdivide and merge cells in the mesh as well as modify the smoothness
of the spline functions and degree elevation and reduction
\cite{Pr84,HuHuMa05,PiegTil97} to modify the polynomial degree of the
basis.

T-splines, introduced in the CAD community~\cite{SeZhBaNa03}, are a
generalization of non-uniform 
rational B-splines (NURBS) which address fundamental
limitations in NURBS-based design. For example, a T-spline can model a complicated design as a single,
watertight geometry and are also locally
refineable~\cite{SeCaFiNoZhLy04, ScLiSeHu10}. Since their advent they
have emerged as an important technology across multiple disciplines
and can be found in several major commercial CAD products~\cite{TSManual12,Autodesk360}.
Recent developments include analysis-suitable
T-splines~\cite{LiZhSeHuSc10,ScLiSeHu10,BeBuChSa12,BeBuSaVa12,LiScSe12},
and their hierarchical extension~\cite{EvScLiTh13}. 

We note that while NURBS and T-splines have become standard technology in IGA there exist a growing number of alternative spline technologies which have been proposed as a basis for IGA.
These are not considered in this paper however it should be noted that \Bezier projection can be used in all of these cases for which \Bezier extraction can be defined.
Hierarchical B-splines~\cite{FoBa88,VuGiJuSi11,SchEvReScHu13,GiJuSp12,
KiGiJu14,GiJuSp13,BeKiBrChMoOhKi14} are a multi-level extension of
B-splines. B-spline forests~\cite{ScThEv13} are a generalization of
hierarchical B-splines to surfaces and volumes of arbitrary
topological genus. Subdivision surfaces generalize smooth B-splines to
arbitrary
topology~\cite{CaCl78,Lo87,GrKrSch02,CiOrShr00,BuHaUm10}. Splines
posed over triangulations have been pursued in the context of
piecewise quadratic $C^1$ Powell-Sabin
splines~\cite{SpMaPeSa12,SpCaFr13}, and $C^0$ \Bezier
triangles~\cite{JaQi14}. Polynomial splines over hierarchical 
T-meshes 
(PHT-splines)~\cite{htspline2, htspline3, 
  htspline4, htspline5}, modified T-splines~\cite{KaChDe13}, and locally refined splines
(LR-splines)~\cite{DoLyPe13, Br13} are closely related to T-splines
with varying levels of smoothness and approaches to local refinement. Generalized
B-splines~\cite{MaPeSa11,CoMaPeSa10} and T-splines~\cite{BrBeChOhKi14}
enhance a piecewise polynomial spline basis by including
non-polynomial functions, typically
trigonometric or hyperbolic functions.

% In addition to NURBS and T-splines, a number of alternative spline
% technologies have been proposed as a basis for IGA, with varying
% strengths and weaknesses. Hierarchical B-splines~\cite{FoBa88,VuGiJuSi11,SchEvReScHu13,GiJuSp12,
%   KiGiJu14,GiJuSp13,BeKiBrChMoOhKi14} are a multi-level extension of
% B-splines. B-spline forests~\cite{ScThEv13} are a generalization of
% hierarchical B-splines to surfaces and volumes of arbitrary
% topological genus. Subdivision surfaces generalize smooth B-splines to
% arbitrary
% topology~\cite{CaCl78,Lo87,GrKrSch02,CiOrShr00,BuHaUm10}. Splines
% posed over triangulations have been pursued in the context of
% piecewise quadratic $C^1$ Powell-Sabin
% splines~\cite{SpMaPeSa12,SpCaFr13}, and $C^0$ \Bezier
% triangles~\cite{JaQi14}. Polynomial splines over hierarchical 
% T-meshes 
% (PHT-splines)~\cite{htspline2, htspline3, 
%   htspline4, htspline5}, modified T-splines~\cite{KaChDe13}, and locally refined splines
% (LR-splines)~\cite{DoLyPe13, Br13} are closely related to T-splines
% with varying levels of smoothness and approaches to local refinement. Generalized
% B-splines~\cite{MaPeSa11,CoMaPeSa10} and T-splines~\cite{BrBeChOhKi14}
% enhance a piecewise polynomial spline basis by including
% non-polynomial functions, typically
% trigonometric or hyperbolic functions. Compatible
% splines or isogeometric discrete differential
% forms~\cite{BuRiSaVa11,BuSaVa12,BuSaVa10,EvHu12-3,EvHu12-1,EvHu12-2}
% are (vector-valued) sequences of spline spaces which possess important
% geometric structure.

\subsection{Quasi-interpolation and local least-squares projection}

Quasi-interpolation methods were originally developed as an efficient
means to obtain spline
representations~\cite{deboor1973b,Bo90,lee2000,barrera2008,sablonniere2005,CoMaPeSa10}.
\Bezier projection can be viewed as an
extension or generalization of the integral quasi-interpolants
presented by \citet{sablonniere2005}. 
Interested readers are referred to \citet{sablonniere2005} for an
overview and classification of quasi-interpolation methods.

The technique for projection onto a spline basis that is most closely
related to our work is the local least-squares
projection method of \citet{govindjee2012} in which 
 local projections onto the spline basis over an element are computed and
then averaged to obtain a global
control value. The averaging step in the method of \citeauthor{govindjee2012} was not presented as
an average, but rather as the application of the pseudoinverse of the
assembly operator to the control values computed for the elements. 
It can be shown that this is equivalent to a simple average of the
local values and hence the local least-squares projection method can
be viewed as a special case of \Bezier projection. 
\subsection{Summary of the paper}
In \cref{sec:notation} required notation and conventions are established.
The Bernstein basis is defined along with expressions to relate
Bernstein basis polynomials over different intervals. 
Expressions for the Gramian matrix of the Bernstein basis and its inverse are also given.
B-splines and NURBS are defined and an informal presentation of two-dimensional T-splines is given.
\Bezier extraction is presented and the element reconstruction
operator is defined as the inverse of the element extraction
operator. 

\Cref{sec:localized-projection} introduces \Bezier projection as a
localized projection operation related to quasi-interpolation. 
A proof of the optimal convergence of the method is given in \cref{sec:proof-cont-proj}.
The \Bezier projection method has three distinct steps.
\begin{enumerate}
\item A function is projected onto the Bernstein basis over each
  element in the mesh.
\item An element reconstruction operator is used to compute the
  representation of the \Bezier curve, surface, or volume in terms of the spline basis
  functions over each element.
\item The local spline coefficients are averaged to obtain the
  coefficients or control values of the global basis. 
\end{enumerate}
Several applications of the method are given including lifting of the
normal field of a spline surface and projection between
nonconforming meshes. 

\cref{sec:proj-betw-spline} focuses on \Bezier projection between spline spaces.
The element extraction operator and the spline element reconstruction
operator are used to develop {\it quadrature-free} algorithms for knot insertion, knot
removal, degree elevation, degree reduction, and reparameterization. 
The \Bezier projection algorithms developed are summarized in \cref{table:spline-ops}.
Simple one-dimensional examples are given for all spline operations in
the associated sections.

\begin{table}
\small
  \caption{Summary of \Bezier projection algorithms developed in this paper.}
\label{table:spline-ops}
\begin{center}
\renewcommand{\arraystretch}{1.5}
    \begin{longtable}{ | m{3cm}  | m{5.5cm} |}%| m{6.5cm}
      \hline
    Operation & Algorithmic Description \\ \hline
    General projection &
    \parbox[t]{5.5cm}
    {
      \begin{tabular}{l l}
        B-splines, NURBS, & \cref{alg:local-proj}\\
        and T-splines&
      \end{tabular}
    } \\ \hline
    degree elevation\newline $p$-refinement & 
% $
% \vec{P}^{e,b} = (\mat{R}^{e,b})^{\trans}(\mat{E}^{\vec{p},\vec{p}^{\prime}})^{\trans}(\mat{C}^{e,a})^{\trans}\vec{P}^{e,a} 
% $
%  &
 \parbox[t]{5.5cm}
 {
\begin{tabular}{l l}
  % \cref{sec:degr-elev-reduct}&\\
  B-splines/NURBS & \cref{alg:spline-elev}\\
  T-splines & \cref{alg:t-spline-elev}% \\
  % $\mat{E}^{\vec{p},\vec{p}^{\prime}}$ & \Cref{eq:elev-mat-kp}
\end{tabular}
}
\\
\hline
    degree reduction \newline $p$-coarsening & 
% $
% \vec{P}^{e,b} = (\mat{R}^{e,b})^{\trans}(\mat{R}^{\vec{p},\vec{p}^{\prime}})^{\trans}(\mat{C}^{e,a})^{\trans}\vec{P}^{e,a} 
% $
%  &
 \parbox[t]{5.5cm}
 {
\begin{tabular}{l l}
  % \cref{sec:degr-elev-reduct}&\\
  B-splines/NURBS & \cref{alg:spline-red}\\
  T-splines & \cref{alg:t-spline-red}% \\
  % $\mat{R}^{\vec{p},\vec{p}^{\prime}}$ & \Cref{eq:deg-red-mat-kp}
\end{tabular}
}
\\
    \hline
    knot insertion\newline basis roughening\newline $k$-refinement & 
% $
% \vec{P}^{e,b} = (\mat{R}^{e,b})^{\trans}(\mat{C}^{e,a})^{\trans}\vec{P}^{e,a} 
% $
%  &
 \parbox[t]{5.5cm}
 {
\begin{tabular}{l l}
  % \cref{sec:knot-insert-remov}&\\
  B-splines/NURBS & \cref{alg:knot-insertion-spline}\\
  T-splines & \cref{alg:knot-insertion-t-spline}
\end{tabular}
}
\\
    \hline
    knot removal\newline basis smoothing \newline $k$-coarsening& 
% $
% \vec{P}^{e,b} = (\mat{R}^{e,b})^{\trans}(\mat{C}^{e,a})^{\trans}\vec{P}^{e,a} 
% $
%  &
 \parbox[t]{5.5cm}
 {
\begin{tabular}{l l}
  % \cref{sec:knot-insert-remov}&\\
  B-splines/NURBS & \cref{alg:knot-removal-spline}\\
  T-splines & \cref{alg:knot-removal-t-spline}
\end{tabular}
}
\\
    \hline
    knot insertion\newline cell subdivision \newline $h$-refinement& 
% $
% \vec{P}^{e_i} = (\mat{R}^{e_i})^{\trans}\mat{A}_i(\mat{C}^e)^{\trans}\vec{P}^{e}
% $
%  &
 \parbox[t]{5.5cm}
 {
\begin{tabular}{l l}
  % \Cref{sec:knot-insert-remov-1}&\\
  B-splines/NURBS & \cref{alg:h-refine-spline}\\
  T-splines & \cref{alg:h-refine-t-spline}\\
  % $\mat{A}_i$ & \Cref{eq:interval-op}\\& \Cref{eq:big-A-def}
\end{tabular}
}
\\
    \hline
    knot removal\newline cell merging\newline $h$-coarsening& 
% $
%   \vec{P}^{\bar{e}} = (\mat{R}^{\bar{e}})^{\trans}\left[\sum_{i=1}^n \phi_i\mat{G}^{-1}\mat{A}_i^{-\trans}\mat{G}(\mat{C}^{e_i})^\trans\vec{P}^{e_i}\right]
% $
%  &
 \parbox[t]{5.5cm}
 {
\begin{tabular}{l l}
  % \Cref{sec:knot-insert-remov-1}&\\
  B-splines/NURBS & \cref{alg:h-coarsen-spline}\\
  T-splines & \cref{alg:h-coarsen-t-spline}% \\
  % $\mat{A}_i^{-1}$ & \Cref{eq:inv-interval-op}\\
  % $\mat{G}$ & \Cref{eq:gramian-expr}\\
  % $\mat{G}^{-1}$&\Cref{eq:inv-gramian-expr}\\
  % $\phi_i$&\Cref{eq:overlap-frac-def}
\end{tabular}
}
\\
    \hline
    reparameterization\newline $r$-refinement& 
% \Cref{eq:large-to-small,eq:small-to-large}
%  &
 \parbox[t]{5.5cm}
 {
\begin{tabular}{l l}
  B-splines, NURBS, & \cref{alg:reparam}\\
 and T-splines&
\end{tabular}
}
\\
    \hline
    \end{longtable}
\end{center}
\end{table}
\clearpage
\section{Notation and preliminaries}\label{sec:notation}
\subsection{Univariate Bernstein basis}\label{sec:univ-bernst-basis}
The univariate Bernstein basis functions are defined as
\begin{equation}
  B_{i}^p(\xi) =\frac{1}{2^p}{p \choose
    i-1}(1-\xi)^{p-(i-1)}(1 + \xi)^{i-1},
\end{equation}
where $\xi \in [-1,1]$ and the binomial coefficient ${p \choose i-1} = \frac{p!}{(i-1)!(p+1-i)!}$, $1 \le i \le p + 1$.
We choose to define the Bernstein basis over the biunit interval to facilitate Gaussian quadrature in finite element analysis
rather than use the CAGD convention where the Bernstein polynomials are defined over the unit interval $[0,1]$.
The univariate Bernstein basis has the following properties:
\begin{itemize}
\item {\textit{Partition of unity}}. $$\sum_{i=1}^{p+1} B_{i}^p(\xi)=1 \quad \forall
  \xi \in [-1, 1]$$
\item {\textit{Pointwise nonnegativity}}. $$B_{i}^p(\xi) \ge 0 \quad \forall \xi \in
  [-1,1]$$
\item {\textit{Endpoint interpolation}}. $$B_{1}^p(-1)=B_{p+1}^p(1)=1$$
\item {\textit{Symmetry}}. $$B_{i}^p(\xi) = B_{p+1-i, p}(-\xi) \quad \forall
  \xi \in [-1, 1]$$
\end{itemize}
The Bernstein basis functions for polynomial degrees $p=1,2,3$ are shown in \cref{fig:bernstein}.
\begin{figure}[htb]
  \centering
  \includegraphics[width=5in]{./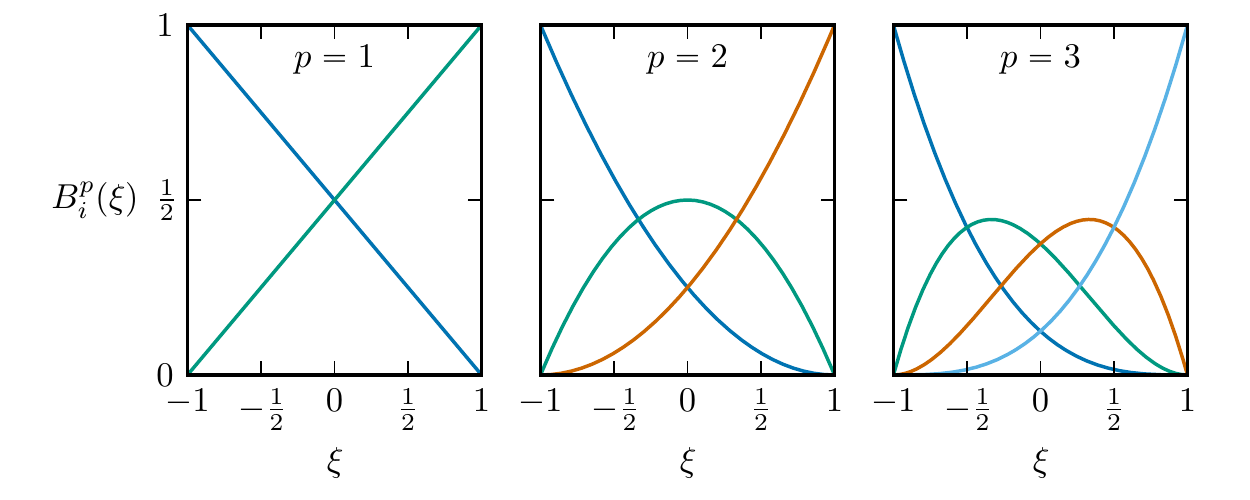}
  \caption{\label{fig:bernstein}The Bernstein basis for polynomial degrees $p=1,2,3$.}
\end{figure}
It is often useful to define a vector of basis functions
\begin{equation}
  \vec{B}^p(\xi) = 
  \begin{bmatrix}
    B_{1}^p(\xi)\\
    B_{2}^p(\xi)\\
    \vdots\\
    B_{p+1}^p(\xi)
  \end{bmatrix}.
\end{equation}
The degree superscript is suppressed when unnecessary.
\begin{lemma}\label{lemma:bern-lin-indep}
  The Bernstein polynomials of degree $p$ are linearly independent and
  form a complete basis for the polynomials of degree $p$ over the
  biunit interval. 
\end{lemma}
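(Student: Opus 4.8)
The plan is to establish two things: first, that the $p+1$ Bernstein polynomials $B_1^p,\dots,B_{p+1}^p$ are linearly independent, and second, that they span the space $\mathbb{P}^p$ of polynomials of degree at most $p$ on $[-1,1]$. Since $\dim \mathbb{P}^p = p+1$, either property together with the correct count implies the other, so in practice I would prove linear independence and then invoke dimension, or equivalently exhibit the monomials as explicit combinations of the $B_i^p$.

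For linear independence, suppose $\sum_{i=1}^{p+1} c_i B_i^p(\xi) = 0$ for all $\xi \in [-1,1]$. The cleanest route is a change of variable: set $t = (1+\xi)/2 \in [0,1]$, so that $1-\xi = 2(1-t)$ and $1+\xi = 2t$, and the definition collapses to the classical Bernstein form $B_i^p(\xi) = \binom{p}{i-1} t^{i-1}(1-t)^{p-(i-1)}$ (the factor $2^{-p}$ cancels the $2^{p}$ coming from the two linear factors). Then one can either (a) differentiate the identity $j$ times at $t=0$ for $j = 0,1,\dots,p$ and use endpoint interpolation together with the fact that $B_i^p$ vanishes to order $i-1$ at $t=0$, peeling off the coefficients $c_1, c_2, \dots$ one at a time; or (b) multiply through by $(1-t)^{-p}$ on $(0,1)$ to get a polynomial identity $\sum_i c_i \binom{p}{i-1} s^{i-1} = 0$ in the variable $s = t/(1-t)$, which ranges over $(0,\infty)$, and conclude all coefficients vanish because the monomials $1, s, \dots, s^p$ are linearly independent. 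Approach (b) is essentially a one-liner once the substitution is in place.

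To get a complete basis, it then suffices to note that $\{B_i^p\}_{i=1}^{p+1}$ is a linearly independent set of $p+1$ elements inside the $(p+1)$-dimensional vector space $\mathbb{P}^p$ (restricted to $[-1,1]$, polynomials are determined by their values, so this restriction map is injective and the dimension is unchanged), hence it is a basis. Alternatively, and more constructively, I would record the binomial identity $1 = \bigl((1-\xi)/2 + (1+\xi)/2\bigr)^p = \sum_{i=1}^{p+1} B_i^p(\xi)$ — which is just the partition of unity already listed — and more generally expand $\bigl((1+\xi)/2\bigr)^k \bigl((1-\xi)/2\bigr)^{p-k}$ by the binomial theorem to exhibit every degree-$p$ monomial in $\xi$ (equivalently in $t$) as a combination of Bernstein polynomials, which shows spanning directly.

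The only mild subtlety — the "hard part," such as it is — is bookkeeping around the author's nonstandard biunit normalization and the shifted index $i-1$: one must check that the prefactor $2^{-p}$ and the exponents $(1-\xi)^{p-(i-1)}(1+\xi)^{i-1}$ really do reproduce the classical Bernstein polynomials under $t=(1+\xi)/2$, so that no degrees collapse and the leading behavior at the two endpoints is as claimed. Once that identification is made the argument is entirely standard, and no results beyond elementary linear algebra and the binomial theorem are needed.
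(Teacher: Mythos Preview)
Your proposal is correct and complete: the substitution $t=(1+\xi)/2$ reduces to the classical Bernstein basis on $[0,1]$, and either of your two arguments for linear independence (triangular system from derivatives at $t=0$, or the one-line monomial identity in $s=t/(1-t)$) works, after which the dimension count $p+1 = \dim \mathbb{P}^p$ gives completeness.

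By way of comparison, the paper does not actually prove this lemma at all. It is stated without proof as a standard fact, with a pointer to the review article of Farouki (2012) on Bernstein polynomials. So your write-up supplies more detail than the paper itself; either of your routes would be an acceptable supporting argument, and approach (b) is the cleaner one to record if you want a self-contained proof.
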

We denote the space of functions over the biunit interval spanned by
the Bernstein basis of degree $p$ by $\fspace{B}^p$. 
A useful review of Bernstein polynomials and their properties is
provided by \citet{farouki2012}.

\subsection{Multivariate Bernstein basis}\label{sec:mult-bernst-basis}
We define a multivariate Bernstein basis over the box of dimension $d_p$, $[-1,1]^{d_p}$, by the tensor product.
The polynomial degree may be different in each direction and so we define the vector of degrees $\vec{p}=\left\{ p_\ell \right\}_{\ell=1}^{d_p}$.
The vector of multivariate Bernstein basis functions is defined by the Kronecker product
\begin{equation}
\label{eq:5}
\vec{B}^{\vec{p}}=\vec{B}^{p_{d_p}}(\xi_{d_p})\otimes\cdots\otimes\vec{B}^{p_1}(\xi_1).
\end{equation}
Thus, there are $n_b =\prod_{\ell=1}^{d_p}(p_{\ell}+1)$ basis functions in the vector.
All of the properties of the univariate Bernstein basis are inherited by the multivariate Bernstein basis.
For two dimensions, the Bernstein basis functions can be indexed by the map 
\begin{equation}
\label{eq:bern-map-2d}
a\left(i,j\right) = (p_1+1)(j-1) + i
\end{equation}
so that
\begin{equation}
\label{eq:bern-basis-2d}
B^{\vec{p}}_{a(i,j)}(\xi_1,\xi_2)=B^{p_1}_i(\xi_1) B^{p_2}_j(\xi_2),
\end{equation}
where $\vec{p}=\left\{ p_1, p_2 \right\}$.
For the trivariate case we have that
\begin{equation}
B_{a(i,j,k)}^{\vec{p}} (\boldsymbol{\xi}) =  B_i^{p_1}(\xi_1)
B_j^{p_2} (\xi_2 ) B_k^{p_3} (\xi_3 ),
\end{equation}
where $\vec{p}=\left\{ p_1, p_2, p_3 \right\}$
with the map
\begin{equation}
\label{eq:bern-map-3d}
a\left(i,j,k\right) = (p_1 + 1 )(p_2 + 1)(k-1) + (p_1+1)(j-1) + i.
\end{equation}
\subsubsection{Relations between Bernstein polynomials over different intervals}\label{sec:relat-betw-bernst}
The Bernstein polynomials over the interval $[a,b]$ are
\begin{equation}
  \label{eq:bern-def}
  B_{i}^p(t) = \binom{p}{i}\frac{(b-t)^{p-(i-1)}(t-a)^{i-1}}{(b-a)^p}.
\end{equation}
Given another interval $[\tilde{a},\tilde{b}]$, the Bernstein polynomials are
\begin{equation}
  \label{eq:bern-def-til}
  \tilde{B}_{i}^p(t) = \binom{p}{i}\frac{(\tilde{b}-t)^{p-(i-1)}(t-\tilde{a})^{i-1}}{(\tilde{b}-\tilde{a})^p}.
\end{equation}
A polynomial function $f$ of degree $p$ can be represented by a linear combination of the Bernstein polynomials over $[a,b]$ or by a combination of the Bernstein polynomials over $[\tilde{a},\tilde{b}]$
\begin{equation}
  f(t) = \sum_{i = 1}^{p+1}c_iB_i^p(t) = \sum_{i = 1}^{p+1}\tilde{c}_i\tilde{B}_i^p(t).
\end{equation}
As shown by \citet{farouki1990}, the coefficient vectors $\vec{c} = \{c_i\}_{i=1}^{p+1}$ and $\tilde{\vec{c}} = \{\tilde{c}_i\}_{i=1}^{p+1}$ can be related by the transformation matrix $\mat{A}$
\begin{equation}
  \tilde{\vec{c}} = \mat{A}\vec{c}
\end{equation}
where the entries of $\mat{A}$ are given by
\begin{equation}
  \label{eq:interval-op}
  A_{jk} = \sum_{i=\max(1,j+k-p+1)}^{\min(j,k)}B_i^{j-1}(\tilde{b})B_{k-i}^{p-j-1}(\tilde{a})\quad\textrm{for $j,k=1,2,\hdots,p+1$}.
\end{equation}
This can be extended to multiple dimensions by a tensor product.
The elements of the inverse of $\mat{A}$ are given by 
\begin{equation}
  \label{eq:inv-interval-op}
  [\mat{A}^{-1}]_{jk} = \sum_{i=\max(1,j+k-p+1)}^{\min(j,k)}\tilde{B}_i^{j-1}(b)\tilde{B}_{k-i}^{p-j-1}(a)\quad\textrm{for $j,k=1,2,\hdots,p+1$}.
\end{equation}
The inverse matrix provides a relationship between the basis functions over each interval
\begin{equation}
\label{eq:inv-interval-op-basis}
\tilde{\vec{B}}^p=\mat{A}^{-\trans}\vec{B}^p.
\end{equation}
Both \cref{eq:interval-op,eq:inv-interval-op} are defined using one-based indexing for both the matrix entries and the Bernstein basis as opposed to the zero-based indexing used by \citet{farouki1990}.
\subsubsection{The Gramian of the Bernstein basis and its inverse}
When computing the projection of an arbitrary function onto the Bernstein polynomials, an expression for the Gramian matrix $\mat{G}^p$ for the basis of degree $p$ is required.
The entries in the matrix are
\begin{equation}
\label{eq:gramian-entries}
G_{jk}^p=\int_{-1}^1B_j^p(\xi)B_k^p(\xi)d\xi\quad\textrm{for $j,k=1,2,\hdots,p+1$}.
\end{equation}
Expressions for products and integrals of the Bernstein polynomials given by \citet{doha2011,farouki2012} permit \cref{eq:gramian-entries} to be written in closed form as
\begin{equation}
  \label{eq:gramian-expr}
  G_{jk}^p=\frac{2}{2p+1}{2p\choose j+k-2}^{-1}{p\choose j-1}{p\choose k-1}.
\end{equation}
The polynomial degree $p$ will generally be suppressed.
The Gramian matrix for a multivariate Bernstein basis of dimension $d_p$ and with the vector of polynomial degrees $\vec{p}=\left\{ p_1,\dots,p_{d_p} \right\}$ is obtained from a Kronecker product
\begin{equation}
\label{eq:multi-var-gramian}
\mat{G}^{\vec{p}}=\mat{G}^{p_{d_p}}\otimes\cdots\otimes\mat{G}^{p_1}.
\end{equation} 

% zero based expression
% \begin{equation}
% [\mat{G}^{-1}]_{jk}=\frac{(-1)^{j+k}}{2}\left[ {p\choose j} {p\choose k} \right]^{-1}\sum_{i=0}^{\min(j,k)}(2i+1){p-i\choose p-j}{p-i\choose p-k}{p+i+1\choose p-j}{p+i+1\choose p-k}
% \end{equation}
An expression for the inverse of the Gramian of the Bernstein basis can be obtained by considering the Bernstein-\Bezier representation of the dual basis given by \citet{juttler1998}.
The \Bezier coefficients of the dual basis are precisely the entries in the inverse of the Gramian and so the expression for the dual basis can be used to obtain
\begin{equation}
\label{eq:inv-gramian-expr}
[(\mat{G}^p)^{-1}]_{jk}=\frac{(-1)^{j+k}}{2}\left[ {p\choose j-1} {p\choose k-1} \right]^{-1}\sum_{i=1}^{\min(j,k)}(2i-1){p-i+1\choose p-j+1}{p-i+1\choose p-k+1}{p+i\choose p-j+1}{p+i\choose p-k+1}
\end{equation}
after modification to use one-based indexing and the Bernstein basis over the biunit interval.
The inverse of a Kronecker product of matrices is given by the Kronecker product of the inverses and so \cref{eq:inv-gramian-expr} can be used to compute the inverse of a multivariate Gramian matrix
\begin{equation}
\label{eq:multi-var-inv-gramian}
(\mat{G}^{\vec{p}})^{-1}=(\mat{G}^{p_{d_p}})^{-1}\otimes\cdots\otimes(\mat{G}^{p_1})^{-1}.
\end{equation} 
\subsection{Univariate spline basis}\label{sec:univ-basis-funct}
A univariate spline is defined by the polynomial degree of the spline $p$ and the knot vector $\mathsf{G}$, a set of non-decreasing parametric coordinates $\mathsf{G}=\left\{ \sigma_i \right\}_{i=1}^{n+p+1}$, $\sigma_i\leq \sigma_{i+1}$ where $n$ is the number of spline basis functions.
We require that the knot vector be open, that is the first $p+1$ knots are equal $\sigma_1=\cdots=\sigma_{p+1}$ and the last $p+1$ knots are equal $\sigma_{n+1}=\cdots=\sigma_{n+p+1}$.
The $A$th spline basis function over the knot vector is defined using the Cox-de Boor recursion formula:
\begin{align}\label{eq:cox-de-boor}
  N_A^0(s) = 
  \begin{cases}
    1 & \sigma_{A} \leq s < \sigma_{A} \\
    0 & \text{otherwise}.
  \end{cases}
\end{align}
\begin{align}
  N_A^p(s) = \frac{s - \sigma_{A}}{\sigma_{A+p} - \sigma_{A}}N_A^{p-1}(s) 
  + \frac{\sigma_{A+p+1} - s}{\sigma_{A+p+1}-\sigma_{A+1}}N_{A+1}^{p-1}(s).
\end{align}
It is also possible to associate a local knot vector with each spline basis function.
The local knot vector $\mathsf{g}_A\subset\mathsf{G}$ is the set of $p+2$ knots chosen contiguously from the knot vector $\mathsf{G}$ that defines the function $N_A$.
Application of the Cox-de Boor recursion formula to the local knot vector $\mathsf{g}_A$ produces the basis function $N_A$ and so it is also possible to index a basis function by its local knot vector:
\begin{equation}
N_A(s)=N_{\mathsf{g}_A}(s).
\end{equation}
The B-spline basis enjoys the following properties:
\begin{itemize}
\item \textit{Partition of unity}. $$\sum_{A=1}^{n} N_A^p(s) = 1,
  \quad \forall s \in [\sigma_1,\sigma_{n+p+1}]$$
\item \textit{Pointwise nonnegativity}. $$N_A^p(s) \geq 0, \quad
  j = 1,\ldots, n, \quad \forall s \in [\sigma_1,\sigma_{n+p+1}]$$
\item \textit{Global linear independence}. $$\sum_{j=1}^{n} c_{j}
  N_A^p(s) = 0 \Leftrightarrow c_i=0, \quad 
  i=1,\ldots,n, \quad \forall s \in [\sigma_1,\sigma_{n+p+1}]$$
\item \textit{Local linear independence}.
Given an open set $\hat{\Omega}' \subseteq \hat{\Omega}$ the B-spline
basis functions having some support in $\hat{\Omega}'$ are linearly independent on
$\hat{\Omega}'$.
\item \textit{Compact support}.
$$\{s \in [\sigma_1,\sigma_{n+p+1}] : N_A^p(s)>0\}\subset[\sigma_i, \sigma_{i+p+1}]$$
\item \textit{Control of continuity}. If $\sigma_{i}$ has multiplicity $k $
  (i.e., $\sigma_i=\sigma_{i+1}=\ldots=\sigma_{i+k-1}$), then 
  the basis functions are $C^{p-k}$-continuous at $\sigma_i$.  When
  $k=p$, the basis is $C^0$ and interpolatory at that location. 
\end{itemize}
It is interesting to observe that the Bernstein basis defined previously is the set of spline functions given by the knot vector
\begin{equation}
\label{eq:bern-kv}
\{\underbrace{-1,\dots,-1}_{p+1}, \underbrace{1,\dots,1}_{p+1} \}.
\end{equation}
As with the Bernstein basis, it is possible to define a vector of basis functions
\begin{equation}
\label{eq:8}
\vec{N}^p(s) = \left\{ N^p_A(s) \right\}_{A=1}^n.
\end{equation}
Note that the number of basis functions is $p+1$ less than the number of knots in the knot vector $\mathsf{G}$.
The space of functions spanned by the functions in $\vec{N}(s)$ is called a spline space.

A spline curve of dimension $d_s$ is a function mapping $\mathbb{R}$ to $\mathbb{R}^{d_s}$.
The curve $\vec{x}(s)$ is defined by a set of $d_s$ dimensional control points $\vec{P}_A$ as
\begin{align}
\label{eq:spline-curve-def}
\vec{x}(s) = \sum_{A=1}^n\vec{P}_AN_A(s).
\end{align}
Due to the variation diminishing property of the spline basis, the curve will generally only interpolate the control points at the ends of the curve or at locations where the spline basis is $C^0$.
An alternate form of \cref{eq:spline-curve-def} can be obtained by defining the vector of control points $\vec{P}=\left\{\vec{P}_A\right\}_{A=1}^n$ so that
\begin{align}
\label{eq:spline-curve-def-2}
\vec{x}(s) = \vec{P}^T\vec{N}(s).
\end{align}
The vector of control points $\vec{P}$ can be interpreted as a matrix of dimension $n\times d_s$.
\subsection{Rational univariate splines}
\label{sec:rat-univ-splines}
The spline basis defined in the previous section provides a flexible means to represent curves, however certain curves of interest such as circular arcs cannot be represented by a polynomial basis.
A rational spline basis can be used to remedy this deficiency.
The rational basis is defined by associating a weight with each basis function $N_A$ and introducing the weight function
\begin{equation}
\label{eq:weight-func-def}
w(s)=\sum_{A=1}^nw_AN_A(s)
\end{equation}
The rational basis functions are then defined as
\begin{equation}
\label{eq:rat-basis-def}
R_A(s)=\frac{w_AN_A(s)}{w(s)}
\end{equation}
and a rational curve is defined as
\begin{equation}
\label{eq:rat-curve-def}
\vec{x}(s)=\sum_{A=1}^n\vec{P}_AR_A(s).
\end{equation}
A rational curve of this type is commonly referred to as a Non-Uniform Rational B-Spline (NURBS) curve.
It is customary to represent the rational curve by a polynomial curve in a $d_s+1$ dimensional space known as a projective space.
The control points $\vec{P}_A$ are converted to so-called homogeneous form $\tilde{\vec{P}}_A=\{w_A\vec{P}_A^\trans,w_A\}^{\trans}$.
This definition permits the definition of the $d_s+1$-dimensional polynomial curve
\begin{equation}
\label{eq:homogen-curve-def}
\tilde{\vec{x}}(s)=\sum_{A=1}^n\tilde{\vec{P}}_AN_A(s).
\end{equation}
This construction permits the application of standard B-spline algorithms to rational spline curves.
Therefore all of the algorithms in this paper can be applied to rational splines.
\subsection{Multivariate spline basis}
Just as the multivariate Bernstein basis is defined by a tensor product of the univariate basis, a multivariate spline basis is defined from a tensor product of univariate spline bases.
The univariate spline basis in each parametric direction are defined by a polynomial degree $p_i$ and a knot vector $\mathsf{G}_i$.
The number of parametric dimensions is denoted by $d_p$.
The total number of basis functions in the spline basis is given by
\begin{equation}
\label{eq:num-spl-funcs}
n=\prod_{i=1}^{d_p}n_i
\end{equation}
where $n_i$ is the number of univariate spline basis functions in the $i$th parametric dimension.

For two dimensions, we define the map
$A(i,j)=n_1(j-1)+i$
and the $A$th basis function is then given by
\begin{equation}
\label{eq:spl-2d-def}
N^{\vec{p}}_{A(i,j)}(s,t)=N_i^{p_1}(s)N_j^{p_2}(t).
\end{equation}
Similarly, for three dimensions,
$A(i,j,k)=n_1n_2(k-1)+n_1(j-1)+i$
and the $A$th basis function is given by
\begin{equation}
\label{eq:spl-3d-def}
N^{\vec{p}}_{A(i,j,k)}(s,t,u)=N_i^{p_1}(s)N_j^{p_2}(t)N_k^{p_3}(u).
\end{equation}
It is also possible to define a single basis function in terms of the local knot vectors that define the function in each parametric direction:
\begin{equation}
\label{eq:spl-2d-local-def}
N^{\vec{p}}_{A(i,j)}(s,t)=N_{\mathsf{g}_i,\mathsf{g}_j}(s,t)=N_{\mathsf{g}_i}(s)N_{\mathsf{g}_j}(t).
\end{equation}
Here $\mathsf{g}_i$ is the $i$th set of $p+1$ contiguous entries in the knot vector $\mathsf{G}_1$ that defines the spline in the first parametric dimension and $\mathsf{g}_j$ is similarly chosen from $\mathsf{G}_2$.
A vector of spline basis functions can be define by a Kronecker product of the vectors of univariate basis functions:
\begin{equation}
\label{eq:kron-prod-multi-var-spl-basis}
\vec{N}^{\vec{p}}(s,t,u)=\vec{N}^{p_3}(u)\otimes\vec{N}^{p_2}(t)\otimes\vec{N}^{p_1}(s)
\end{equation}
A multivariate rational spline basis can be defined in the same manner as the univariate case.

Spline surfaces and volumes can be constructed by associating a control point with each basis function.
The geometric map is defined as the sum of the product of control points and spline basis functions:
\begin{equation}
\label{eq:multi-geom-map}
\vec{x}(\vec{s})=\sum_{A=1}^n\vec{P}_AN_A(\vec{s})
\end{equation}
where $\vec{x}=\left\{ x_i \right\}_{i=1}^{d_s}$ is a spatial position vector and $\vec{s}=\left\{ q_i \right\}_{i=1}^{d_p}$ is a parametric position vector.
The geometric map is given in matrix form as
\begin{equation}
\label{eq:multi-geom-map-mat}
\vec{x}(\vec{s})=\vec{P}^{\trans}\vec{N}^{\vec{p}}(\vec{s})
\end{equation}
where $\vec{P}$ is the $n\times d_s$ vector of control points.% (or $n\times(d_s+1)$ for homogeneous coordinates).

The geometric map is a bijective map from the parametric domain $\hat{\Omega}\subset \mathbb{R}^{d_p}$, which defines the spline, to the physical or spatial domain $\Omega\subset\mathbb{R}^{d_s}$.
Due to the convex hull property of splines, the spatial domain is contained in the convex hull of the control points.
The geometric map defines the spline surface or volume in physical
space and parameterizes it by the parametric coordinates. A rational spline basis can be constructed from these ideas in a manner similar to what is
described for the one-dimensional case in~\cref{sec:rat-univ-splines}.
\subsection{T-splines}
T-splines represent a generalization of B-splines and NURBS.
Whereas a B-spline is constructed from a tensor product of univariate splines, a T-spline permits meshes with hanging nodes or T-junctions.
T-splines contain standard B-splines and NURBS as special cases.
The theory of T-splines is rich and dynamic. We do not delve deeply
into T-spline theory in this paper and instead refer the interested
reader to~\citet{SeCaFiNoZhLy04,ScBoHu10,ScLiSeHu10,LiScSe12} and the references contained therein.
We limit our discussion to two-dimensional T-splines of arbitrary degree.

\subsubsection{The T-mesh}
Given polynomial degrees, $p_1$ and $p_2$, and two index vectors, $\mathsf{I}_i=\left\{ 1, 2, \dots,
  n_i+p_i+1\right\}$, $i=1,2$, we define the index domain of the T-mesh as
$\bar{\Omega}=[1,n_1+p_1+1]\otimes[1,n_2+p_2 +1]$. We associate a knot
vector $\mathsf{G}_i=\left\{\sigma_{1}^i\leq \sigma_{2}^i\leq
  \cdots\leq \sigma_{{n_i+p_i+1}}^i\right\}$, $i=1,2$, with the
corresponding index vector $\mathsf{I}_i$.
Any repeated entries in $\mathsf{G}_i$ are referred to as knots of multiplicity $m$.
We require that the first and last knots have multiplicity $p_i+1$, this is commonly called an open knot vector.
We also require that no knot in $\mathsf{G}_i$ have multiplicity greater than $p_i+1$.
It should be noted that repeated knots have unique indices. We define
the parametric domain of the T-mesh as $\hat{\Omega}=[\sigma_1^1,
\sigma_{{n_1+p_1+1}}^1] \otimes [\sigma_1^2,
\sigma_{{n_2+p_2+1}}^2]$.

A T-mesh $\mathsf{T}$ is a rectangular partition of the index domain
such that all vertices have integer coordinates taken from
$\mathsf{I}_1$ and $\mathsf{I}_2$, all cells are rectangular,
non-overlapping, and open, and all edges are horizontal or vertical
line segments which do not intersect any cell. Because there are
corresponding parametric values assigned to each vertex in the index space, the T-mesh can be mapped to the parametric domain.
Cells in the index domain that are bounded by repeated knot values are mapped to cells of zero parametric area in the parametric domain.
An example T-mesh is shown in \cref{fig:tmesh}.
Cells that are bounded by repeated knots in at least one dimension have zero parametric area and are shown in gray in the figure.
	\begin{figure}[htb]
	\centering
	{\includegraphics [width=2in]{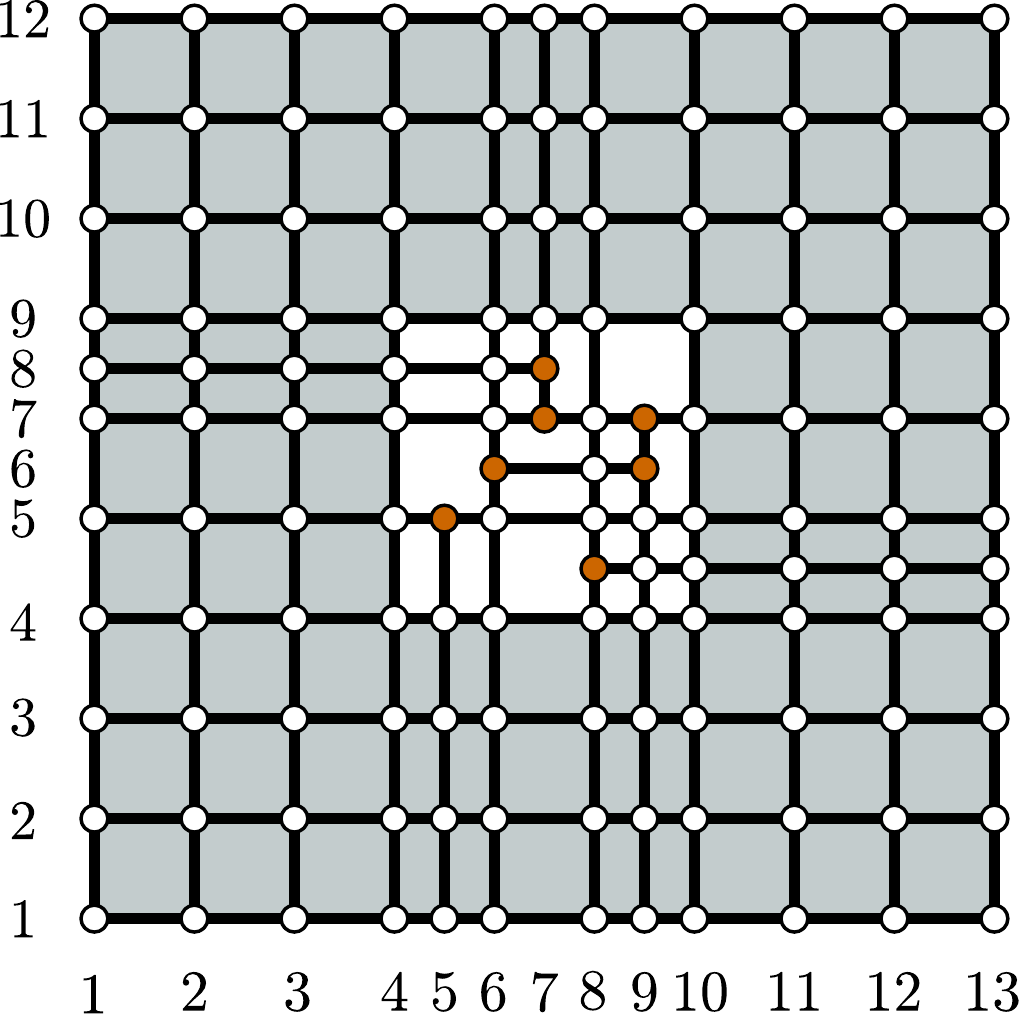}}
	\caption{An example T-mesh.  Vertices are marked with open circles and the vertices corresponding to T-junctions are marked with orange circles.  Cells with zero parametric area are gray.} \label{fig:tmesh}
	\end{figure}
We say that two T-meshes $\mathsf{T}^a$ and $\mathsf{T}^b$ are nested if $\mathsf{T}^b$ can be created by adding vertices and edges to $\mathsf{T}^a$.
We use the notation $\mathsf{T}^a\subseteq\mathsf{T}^b$ to indicate this relationship.

\subsubsection{T-spline basis functions}
The T-spline basis functions are constructed from the T-mesh and the
knot vectors. Note that we use the term basis function throughout this section
although there is no guarantee that the set of blending functions inferred
from a T-mesh form a basis for the space. This question is resolved by
analysis-suitable T-splines. A basis function is anchored to unique T-mesh entities
(i.e., vertices, edges, cells) as follows:
\begin{itemize}
\item If $p_1$ and $p_2$ are odd then the anchors are all the vertices in the T-mesh with indices greater than $i_{p_1}$ and less than ${n_1+1}$ in the first dimension and greater than ${p_2}$ and less than ${n_2+1}$ in the second dimension.
\item If $p_1$ and $p_2$ are even then the anchors are all the cells in the T-mesh bounded by vertices with indices greater than ${p_1}$ and less than ${n_1+1}$ in the first dimension and greater than ${p_2}$ and less than ${n_2+1}$ in the second dimension.
\item If $p_1$ is even and $p_2$ is odd then the anchors are all the horizontal edges in the T-mesh bounded by vertices with indices greater than ${p_1}$ and less than ${n_1+1}$ in the first dimension and greater than ${p_2}$ and less than ${n_2+1}$ in the second dimension.
\item If $p_1$ is odd and $p_2$ is even then the anchors are all the vertical edges in the T-mesh bounded by vertices with indices greater than ${p_1}$ and less than ${n_1+1}$ in the first dimension and greater than ${p_2}$ and less than ${n_2+1}$ in the second dimension.
\end{itemize}
The function anchors for these four cases are illustrated in \cref{fig:anchor-loc}.
We assume that the anchors can be enumerated and refer to each anchor by its index $A$.

\begin{figure}
  \centering
\begin{tabular}{*{3}{c}}
\includegraphics[scale=0.6]{./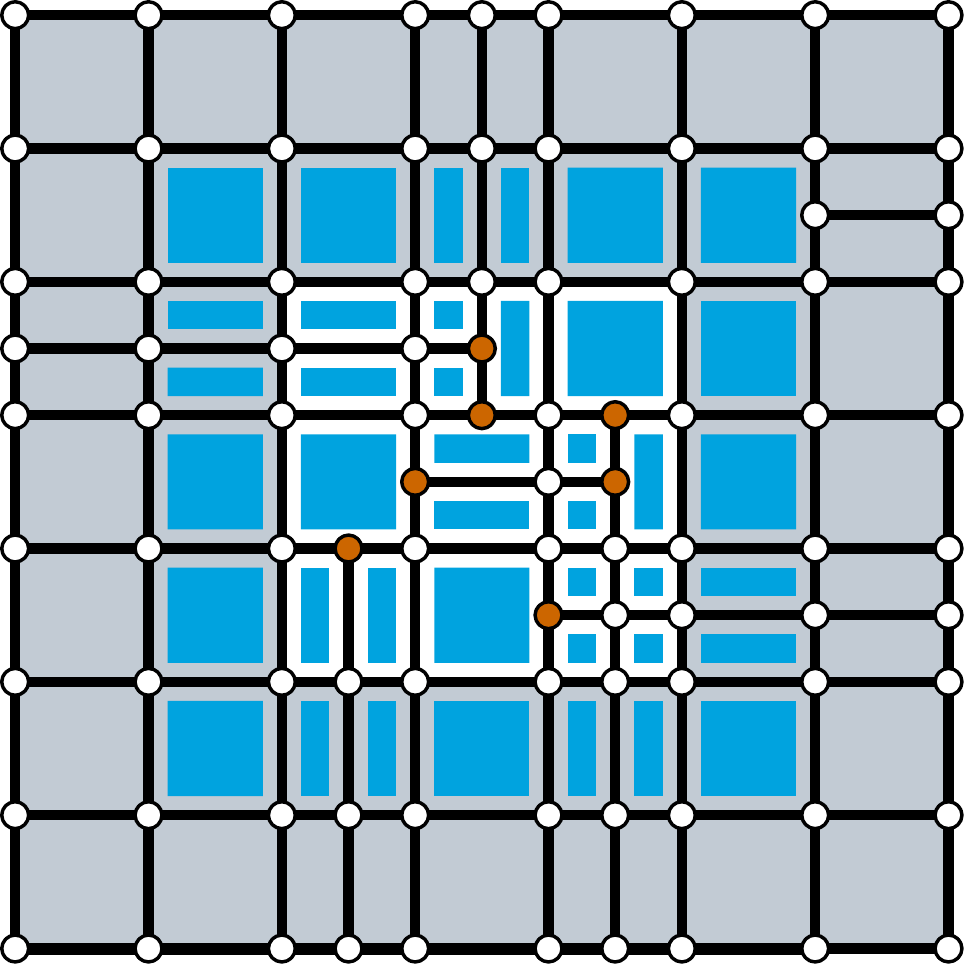}
&
\includegraphics[scale=0.6]{./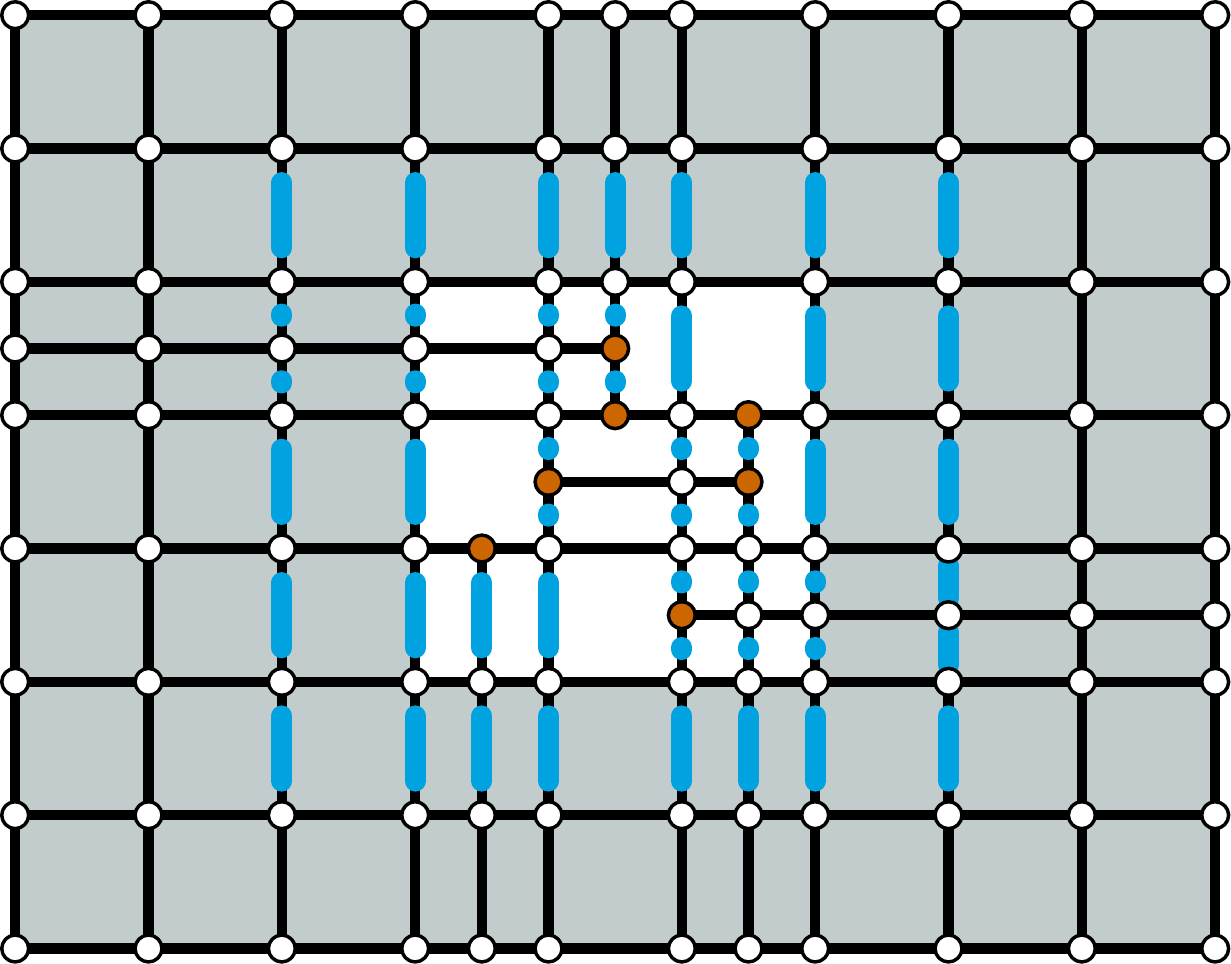}
\\
(a) $p_1 =2,\;p_2=2$ anchors are faces.
&
(b) $p_1=3,\;p_2 =2$ anchors are vertical edges.
\\
&
\\
\includegraphics[scale=0.795]{./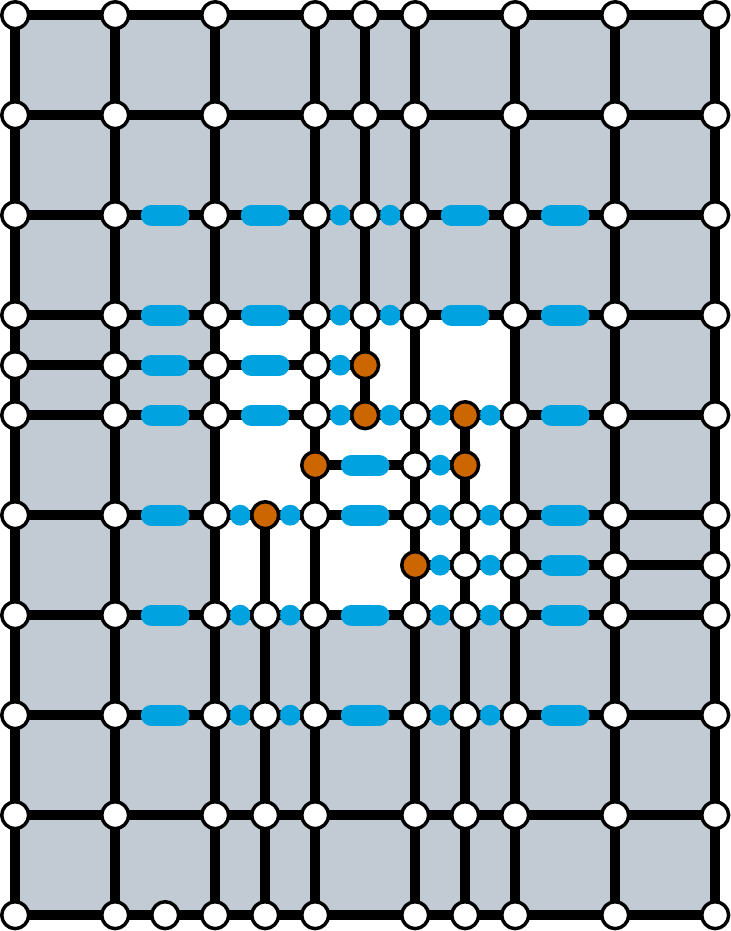}
&
\includegraphics[scale=0.795]{./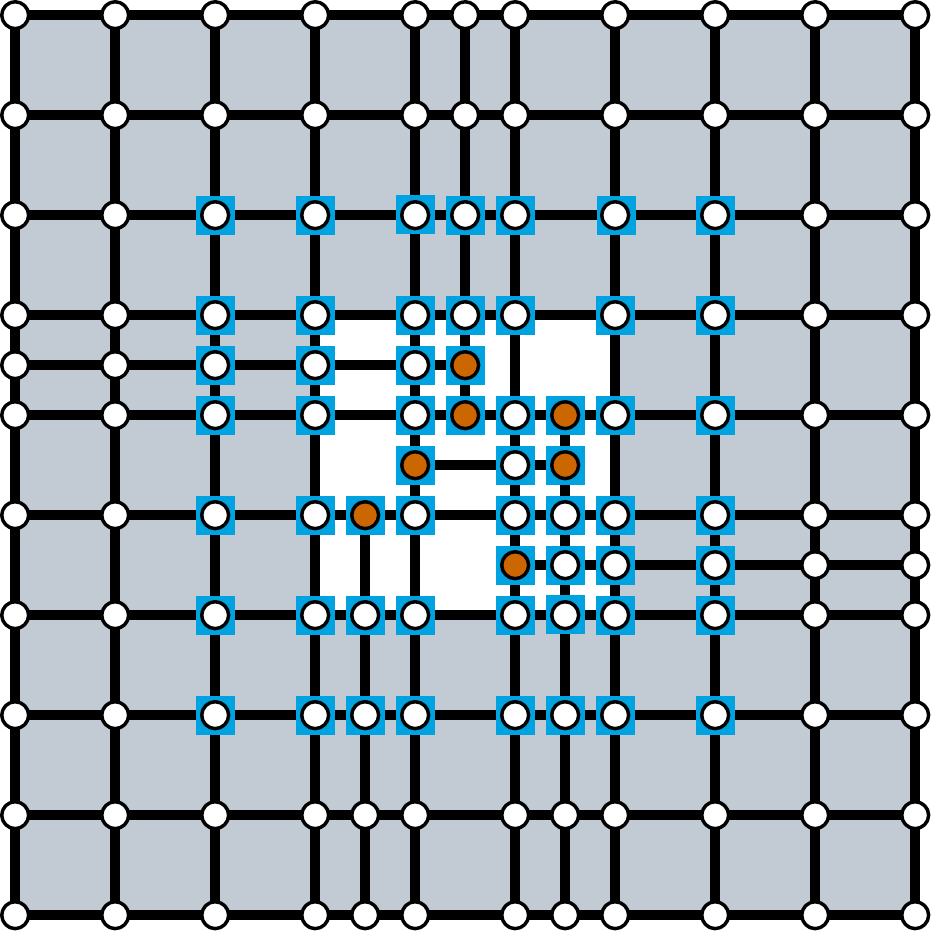}
\\
(c) $p_1=2,\;p_2=3$ anchors are horizontal edges.
&
(d) $p_1=3,\;p_2=3$ anchors are vertices.
\end{tabular}
\caption{The set of anchors for varying values of $p_1$ and $p_2$.  In this picture blue represents the anchor locations, gray is the boundary cells with zero parametric area, and the orange vertices are T-junctions.}
\label{fig:anchor-loc}
\end{figure}

The basis functions associated with each anchor are defined by constructing a local knot vector in each parametric direction.
The function anchor is indicated by its index $A$ and so the local knot vector associated with $A$ in the $i$th parametric direction is denoted by $\mathsf{g}_{A,i}$.
The algorithm for constructing the local knot vector in the $i$th parametric direction associated with the $A$th anchor is given here with examples of its application following.
\begin{algorithm}\label{alg:local-kv}
  Construction of the local knot vector in the $i$th parametric direction for the $A$th function anchor from the T-mesh.
\begin{enumerate} 
\item Find the  line that lies in the $i$th parametric direction and that passes through the center of the function anchor.  We refer to this as the anchor line associated with the $i$th parametric direction.  This line is used to find the indices that define the local knot vector.
\item Determine the width $h_i$ of the anchor of interest in the direction perpendicular to the anchor line and thicken the line so that it has width $h_i$ and is centered on the anchor line.  If the polynomial degree $p_i$ is odd in all directions then $h_i=0$ and so the anchor line and thickened anchor line coincide.
\item Find the indices of vertices or perpendicular edges in the T-mesh whose intersection with the thickened anchor line is nonempty and of length $h_i$.%, are perpendicular to it, and that support a line of length $h_i/2$ or greater drawn in the T-mesh on either side of the anchor line and lying perpendicular to it.
\item The local index vector $\mathsf{i}_{A,i}$ is the ordered set of indices formed by collecting the closest $\lceil (p_i+1)/2 \rceil$ indices found in the previous step on either side of the anchor $A$.  If $p_i$ is odd, then the index of the edge or vertex associated with the anchor is added also to the local index vector.  The local index vector $\mathsf{i}_{A,i}$ is of length $p_i+2$.
\item The local knot vector $\mathsf{g}_{A,i}$ is formed by collecting the knot entries in the global knot vector $\mathsf{G}_i$ given by the indices in the local index vector $\mathsf{i}_{A,i}$.
\end{enumerate}
\end{algorithm}
% The local knot vector is constructed by finding the $\lceil (p_i+1)/2 \rceil$ mesh edges or vertices on either side of the anchor that lie on a line in the $i$th parametric direction that passes through the anchor and forming the ordered vector of the knot values associated with the $i$th index that defines each relevant edge or vertex.
% For odd degrees, the knot value associated with the anchor is also included.
% For splines that have even degree in at least one direction, the mesh edges crossed must be large enough to span the width of the anchor.
By carrying this process out in each parametric direction, a set of local knot vectors $\left\{ \mathsf{g}_{A,1}, \mathsf{g}_{A,2} \right\}$ associated with the anchor $A$ can be obtained.
The basis function associated with the anchor $A$ is then defined in the same manner as the local spline basis function for B-splines indexed by local knot vectors:
\begin{equation}
\label{eq:t-spl-basis-def}
N_A(s,t)=N_{\mathsf{g}_{A,1},\mathsf{g}_{A,2}}(s,t)=N_{\mathsf{g}_{A,1}}(s)N_{\mathsf{g}_{A,2}}(t)
\end{equation}
where the functions $N_{\mathsf{g}_{A,i}}$ are obtained by applying the Cox-de Boor formula to the local knot vectors $\mathsf{g}_{A,i}$.
The process for constructing local knot vectors is illustrated in \cref{fig:local-func-examples} for four cases: $\vec{p}=\left\{ 2,2 \right\}$, $\vec{p}=\left\{ 3,2 \right\}$, $\vec{p}=\left\{ 2,3 \right\}$, and $\vec{p}=\left\{ 3,3 \right\}$.

T-splines with even degree in both directions have cell faces as anchors; thus for the $\vec{p}=\left\{ 2,2 \right\}$ case shown in part (a), the function anchor is marked by a large, light blue box covering the cell face.
We use the global knot vectors 
\begin{equation}
\mathsf{G}_1=\left\{ 0,0,0,1,2,3,4,5,6,7,7,7 \right\}
\end{equation} 
and 
\begin{equation}
\mathsf{G}_2=\left\{ 0,0,0,1,2,3,4,5,6,7,7,7 \right\}.
\end{equation}
The anchor line used to construct the horizontal knot vector is shown in dark blue and the anchor line used to construct the vertical knot vector is shown in green.
Because the spline has even polynomial degree, the thickened anchor line is shown in both directions.
The indices that contribute to the local knot vector are marked with a $\boldsymbol{\times}$.
The indices used to construct the horizontal local index vector for the marked function are $\mathsf{i}_{A,1}=\left\{ 3, 4, 8, {10} \right\}$ and so the local knot vector for the function is $\mathsf{g}_{A,1}=\left\{ 0,1,5,7 \right\}$.
The indices $5$ and $7$ were skipped because there are no edges associated with those indices that intersect with the horizontal anchor line used to determine the local knot vector.
Note that the index $9$ was skipped because the edges that intersect the horizontal line do not span the thickened anchor line due to the missing edge between $(9,6)$ and $(9,7)$.
Similarly, the vertical local index vector is $\mathsf{i}_{A,2}=\left\{ 2, 3, 7, 9 \right\}$ and so the vertical local knot vector for the function is $\mathsf{g}_{A,2}=\left\{ 0,0, 4, 6\right\}$.

\begin{figure}
  \centering
\begin{tabular}{*{3}{c}}
\includegraphics[scale=0.9]{./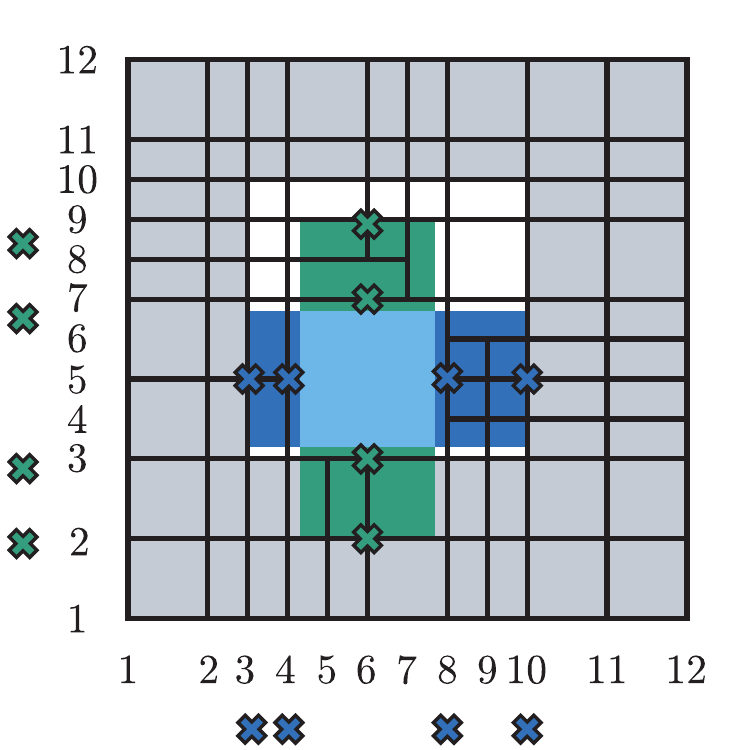}
&
\includegraphics[scale=0.9]{./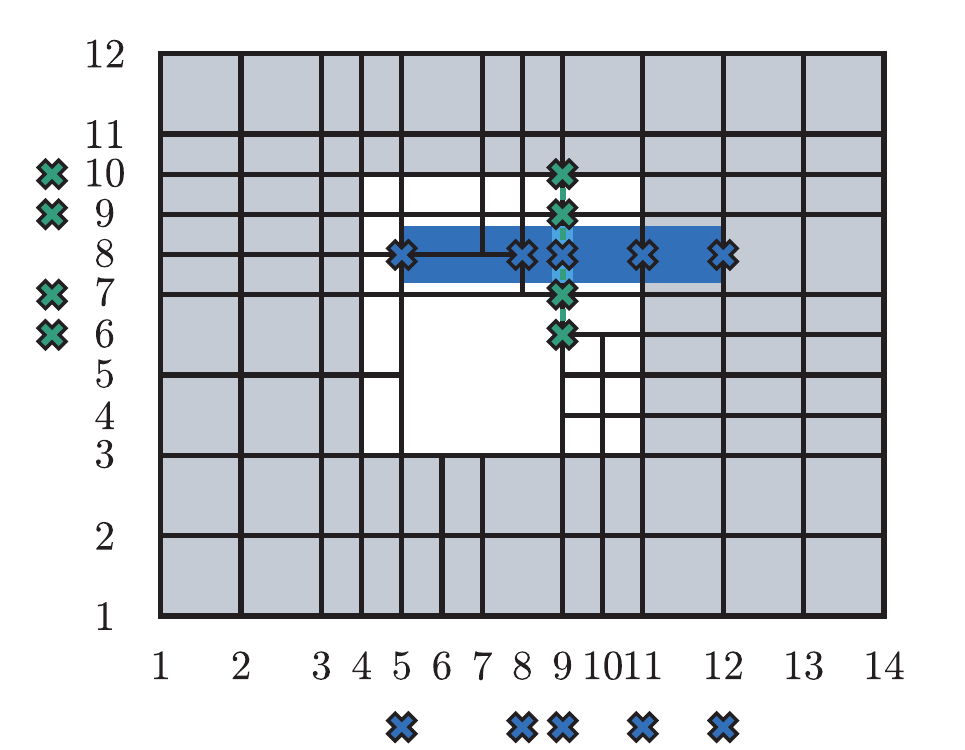}
\\
\parbox{2.5in}{(a) $p_1 =2,\;p_2=2$ anchors are faces.  Thickened anchor lines in both directions.}
&
\parbox{2.5in}{(b) $p_1=3,\;p_2 =2$ anchors are vertical edges.  Only the horizontal anchor line must be thickened.}
\\
&
\\
\includegraphics[scale=0.9]{./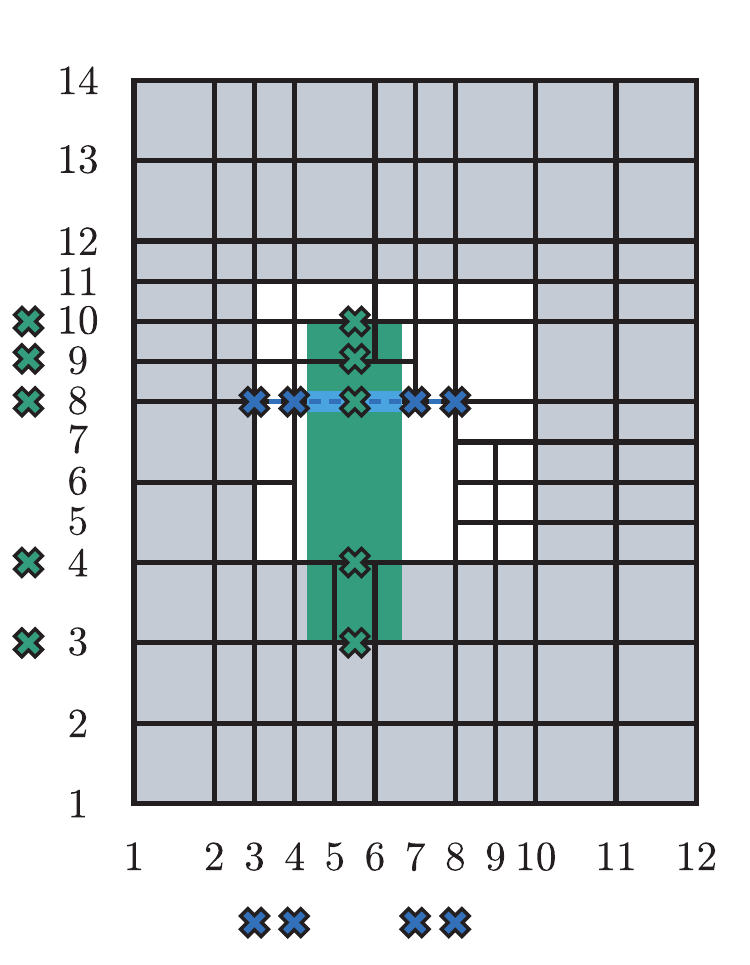}
&
\includegraphics[scale=0.9]{./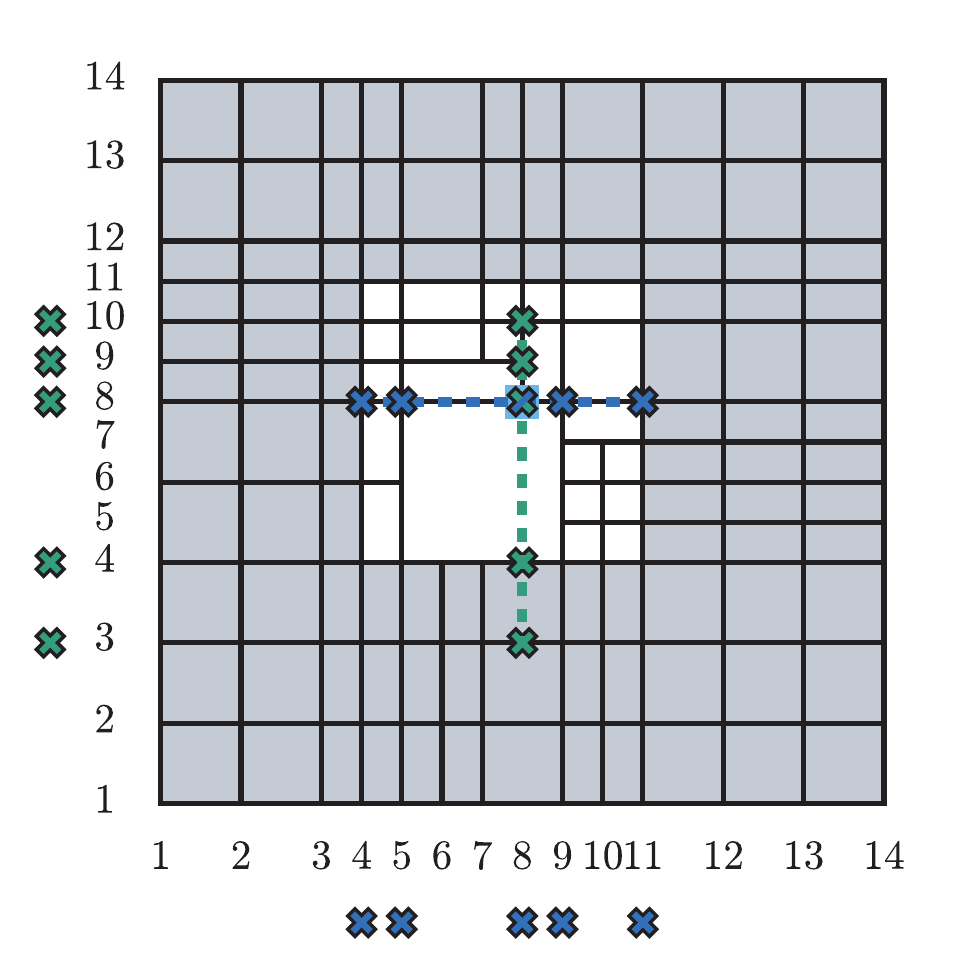}
\\
\parbox{2.5in}{(c) $p_1=2,\;p_2=3$ anchors are horizontal edges.  Only the vertical anchor line must be thickened.}
&
\parbox{2.5in}{(d) $p_1=3,\;p_2=3$ anchors are vertices.  Neither anchor line must be thickened.}
\end{tabular}
\caption{Examples for how local knot vectors are constructed for T-spline basis functions of varying values of the polynomial degrees $p_1$ and $p_2$.  The function anchors are marked with light blue.  The thickened horizontal anchor line used to determine the horizontal knot vector is indicated with a dark box where necessary while in cases that do not require thickening it is shown as a dark blue dashed line.  The vertical anchor line used to calculate the vertical knot vector is marked in green.  The indices that contribute to the local knot vectors are marked with $\times$ and colored dark blue for those in the horizontal direction and green for those in the vertical direction.}
\label{fig:local-func-examples}
\end{figure}

The mixed degree case $\vec{p}=\left\{ 3,2 \right\}$ is shown in part (b).
We now use the global knot vectors 
\begin{equation}
\mathsf{G}_1=\left\{ 0,0,0,0,1,2,3,4,5,6,7,7,7,7 \right\}
\end{equation} 
and 
\begin{equation}
\mathsf{G}_2=\left\{ 0,0,0,1,2,3,4,5,6,7,7,7 \right\}.
\end{equation}
Here the function anchor is a vertical edge and so only the horizontal anchor line is thickened (shown in dark blue in the figure).
The vertical anchor line is shown as a dashed green line.
The horizontal indices that contribute to the local index vector are $\mathsf{i}_{A,1}=\left\{ 5, 9, {11}, {12} \right\}$ and so the horizontal local knot vector is $\mathsf{g}_{A,1}=\left\{ 1, 4, 5, 7,7 \right\}$.
The index $7$ is skipped because it does not have edges that span the thickened anchor line at the intersection.
It is not necessary to check the span in the vertical direction because the anchor has no width.
The vertical index vector is $\mathsf{i}_{A,2}=\left\{ 6, 7, 9, {10} \right\}$ and the vertical local knot vector is $\mathsf{g}_{A,2}=\left\{ 3,4,6,7 \right\}$.

The opposite mixed degree case $\vec{p}=\left\{ 2,3 \right\}$ is shown in part (c).
The global knot vectors are now
\begin{equation}
\mathsf{G}_1=\left\{ 0,0,0,1,2,3,4,5,6,7,7,7 \right\}
\end{equation} 
and 
\begin{equation}
\mathsf{G}_2=\left\{ 0,0,0,0,1,2,3,4,5,6,7,7,7,7 \right\}.
\end{equation}
The function anchors are now horizontal edges and so only the vertical anchor line must be thickened.
The indices for the horizontal local index vector are $\mathsf{i}_{A,1}=\left\{ 3,4,7,8 \right\}$ and the knot vector is $\mathsf{g}_{A,1}=\left\{ 0,1,4,5 \right\}$.
The horizontal anchor line has no width perpendicular to the horizontal direction and so only intersections must be checked.
The indices for the vertical local index vector are $\mathsf{i}_{A,2}=\left\{ 3,4,8,9,10 \right\}$; here all of the edges intersected span the anchor.
The vertical local knot vector is $\mathsf{g}_{A,2}=\left\{ 0,0,4,5,6 \right\}$.

The odd degree case $\vec{p}=\left\{ 3,3 \right\}$ is shown in part (d).
We choose the global knot vectors 
\begin{equation}
\mathsf{G}_1=\left\{ 0,0,0,0,1,2,3,4,5,6,7,7,7,7 \right\}
\end{equation}
and
\begin{equation}
\mathsf{G}_2=\left\{ 0,0,0,0,1,2,3,4,5,6,7,7,7,7 \right\}.
\end{equation}
For T-splines with odd degree in both directions, the anchors are vertices, the anchor lines are not thickened, and so only intersection must be checked.
The indices for the horizontal local knot vector are $\left\{ 4,5, 8, 9, {11} \right\}$ and the local knot vector is $\mathsf{g}_{A,1}=\left\{0,1,4,5,7\right\}$.
The indices for the vertical local knot vector are $\left\{ 3, 4, 8,
  9, {10} \right\}$ and so the local knot vector is
$\mathsf{g}_{A,2}=\left\{ 0,0,4,5,6 \right\}$.
Once the T-spline basis functions have been defined and control points
have been assigned to each one, the geometric map is given by 
\begin{equation}
\label{eq:t-spl-geom-map}
\vec{x}(s,t)=\sum_{A=1}^n\vec{P}_AN_A(s,t).
\end{equation}

\subsubsection{Face and edge extensions and analysis-suitable T-splines}
Although the T-spline blending functions defined in this fashion can
be used to define smooth surfaces, many properties of the resulting
space are not immediately obvious.
In order to develop T-splines that are well-characterized and suitable for analysis, we introduce the face and edge extensions of T-junctions.
A face extension is a closed line segment that extends from the
T-junction in the direction of the face of the cell at which the
T-junction terminates and that crosses $\lfloor(p_i+1)/2\rfloor$
perpendicular edges or vertices. 
The edge extension of a T-junction is a closed line segment that extends from the T-junction in the opposite direction of the face extension and that crosses $\lceil (p_i-1)/2\rceil$ perpendicular edges or vertices.
A T-mesh is analysis suitable if no vertical T-junction extension intersects a horizontal T-junction extension.
Note that because the edge and face extensions are closed segments, they can intersect at endpoints.
The face and edge extensions are shown for a T-mesh that is not analysis-suitable and for an analysis-suitable T-mesh in \cref{fig:extensions}.
The T-mesh formed by adding all face extensions to $\mathsf{T}$ is called the extended T-mesh and is denoted by $\mathsf{T}_{\mathrm{ext}}$.
\begin{figure}
  \centering
\begin{tabular}{*{2}{c}}
\includegraphics[scale=0.5]{./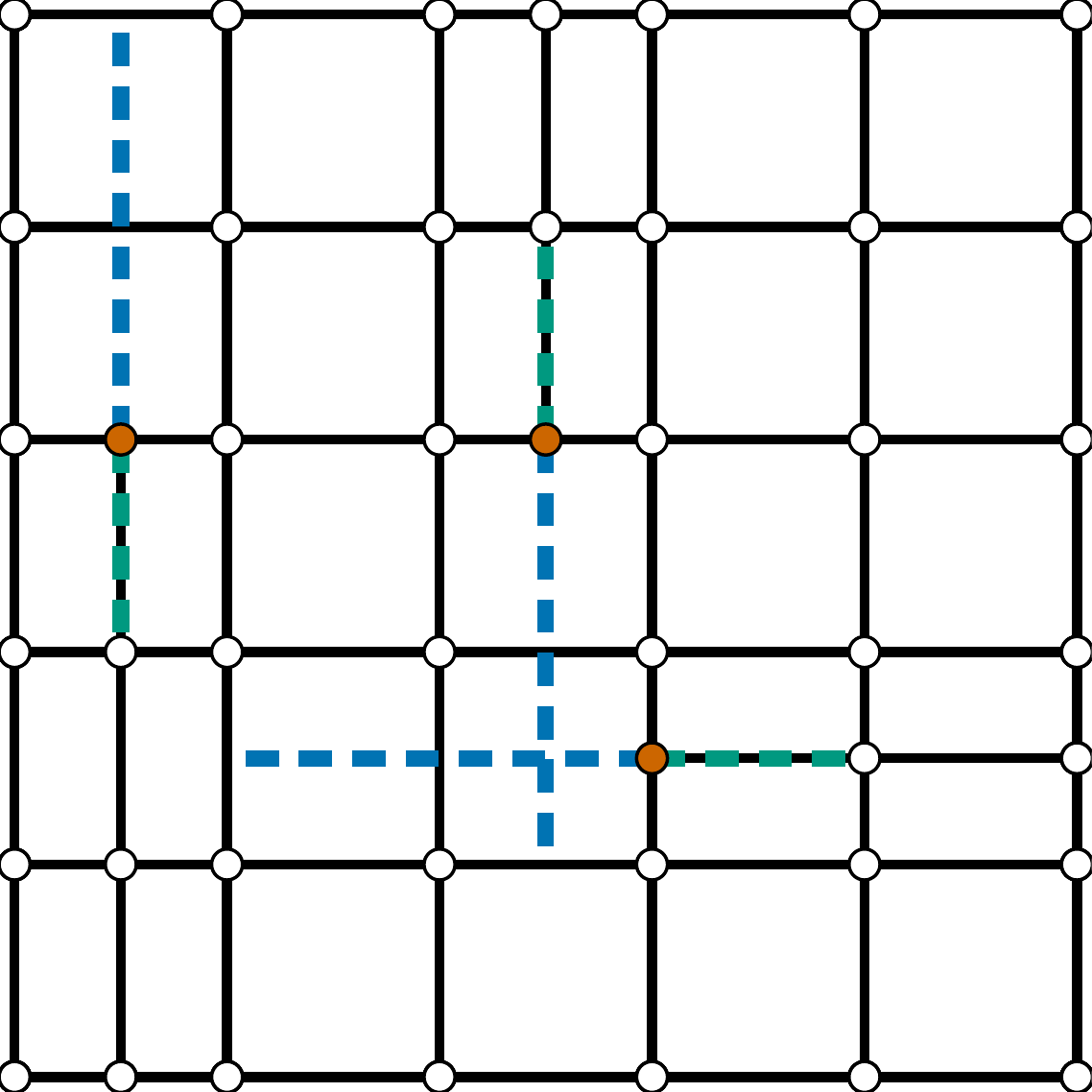}
&
\includegraphics[scale=0.5]{./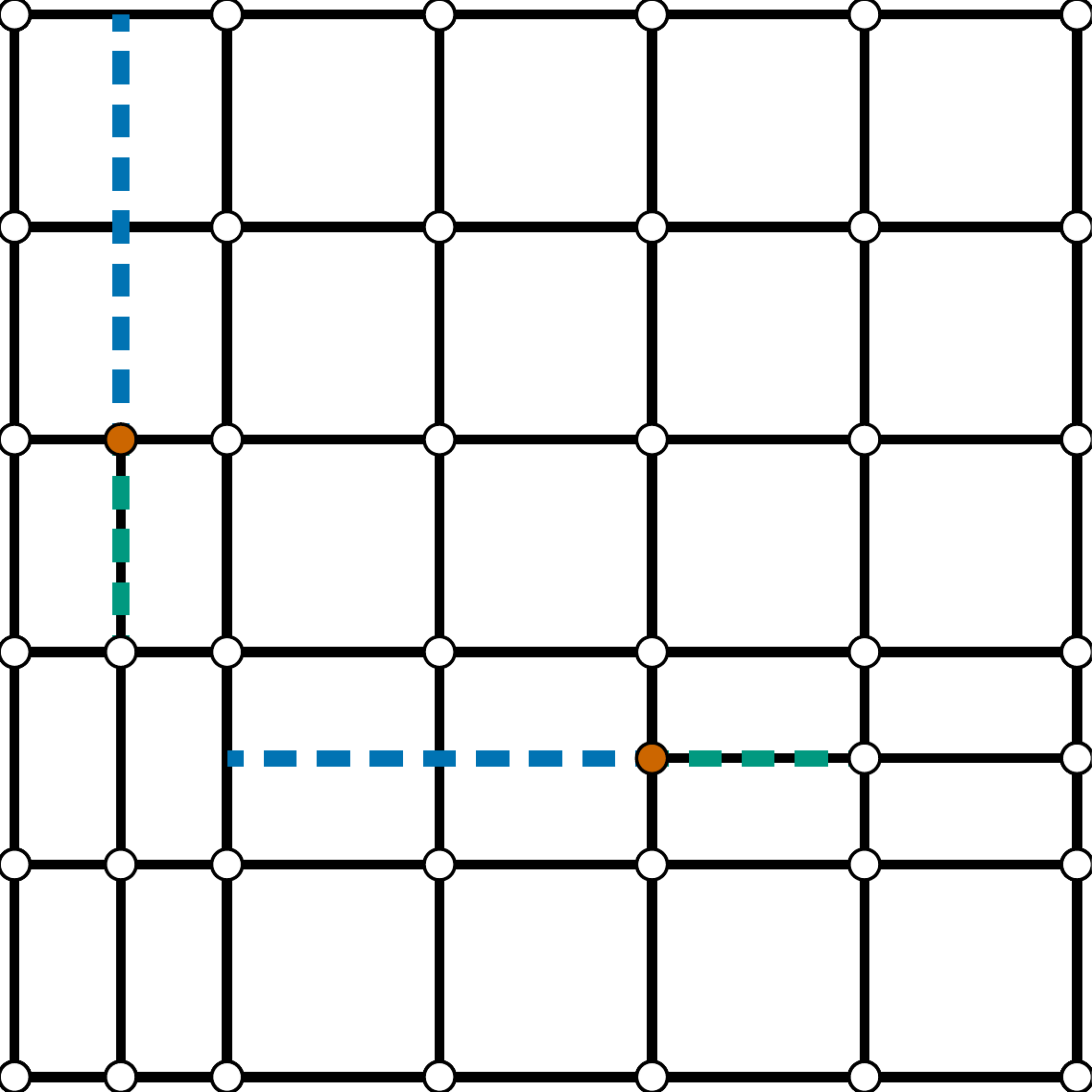}
\end{tabular}
\caption{T-junction extension in two dimensions for a bicubic T-spline.  Face extensions are shown in blue and edge extensions are shown in green.  The T-junctions are marked with orange circles.  The T-mesh on the left is not analysis-suitable while the T-mesh on the right is.}
\label{fig:extensions}
\end{figure}

The T-spline basis defined by an analysis-suitable T-mesh possesses many
important mathematical properties including the following theorems~\cite{EvScLiTh13,LiScSe12}:
\begin{theorem}\label{thm:ast-lin-indep}
  The basis functions of an analysis-suitable T-spline are locally linearly independent.
\end{theorem}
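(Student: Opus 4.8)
The plan is to reduce local linear independence to \emph{global} linear independence on suitably chosen subdomains, exploiting the tensor-product structure of the T-spline blending functions inherited from the local knot vectors, together with the defining combinatorial property of analysis-suitability (no vertical T-junction extension meets a horizontal one). Recall that each T-spline basis function $N_A = N_{\mathsf{g}_{A,1}}(s)\,N_{\mathsf{g}_{A,2}}(t)$ is a product of univariate B-splines built from local knot vectors via the Cox--de~Boor recursion. So the first step is to make precise what must be shown: for every open set $\hat\Omega' \subseteq \hat\Omega$, the functions $N_A$ with $\operatorname{supp} N_A \cap \hat\Omega' \neq \emptyset$, restricted to $\hat\Omega'$, are linearly independent. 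Since any open set contains a small open rectangle, and since linear independence on a subset is inherited by supersets (more functions are "active" but the independence of any subcollection on a subdomain implies it on a larger one is \emph{false} in general — one must be careful), the correct reduction is: it suffices to prove independence on every \emph{cell} of the extended T-mesh $\mathsf{T}_{\mathrm{ext}}$, because an arbitrary open set meets the interior of at least one such cell and the functions nonzero on a cell are a subset of those nonzero on any open set meeting it — here independence of the \emph{larger} family on the \emph{smaller} open set is what we want, so we need independence of all functions active on a cell, restricted to that cell.

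Next I would set up the key structural fact: on a cell $\hat{e}$ of $\mathsf{T}_{\mathrm{ext}}$, the restrictions of the active T-spline functions to $\hat e$ are all polynomials, and in fact the set of active functions has a tensor-product-like organization. The central lemma, which is essentially the content of the analysis-suitability hypothesis, is that on (a neighborhood of) each cell of $\mathsf{T}_{\mathrm{ext}}$ the active blending functions agree with the restrictions of functions from a genuine tensor-product B-spline space whose one-dimensional factors are B-splines on the "traces" of the T-mesh knot lines through that cell. This is where analysis-suitability does its work: the absence of crossing T-junction extensions guarantees that the local knot vectors $\mathsf{g}_{A,1}$ appearing for functions active on $\hat e$ depend only on the horizontal position (not on which horizontal strip one is in), and symmetrically for $\mathsf{g}_{A,2}$; without this, two functions active on the same cell could carry local knot vectors that are "incompatible," and the tensor-product embedding would fail. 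I would cite the structural results of \citet{LiScSe12} (dual-compatibility / the characterization of AST-spline spaces) to obtain this embedding rather than re-deriving it.

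Once the embedding into a local tensor-product B-spline space is in hand, the rest is short: univariate B-splines on a fixed knot vector are locally linearly independent (a classical de~Boor property, and an instance of the local linear independence property already listed for the B-spline basis in this excerpt), and local linear independence is preserved under tensor products — a vanishing linear combination on an open rectangle $\hat\Omega_1' \times \hat\Omega_2'$ forces, by fixing $t$ and using univariate independence in $s$, that each "$s$-coefficient function" in $t$ vanishes on $\hat\Omega_2'$, and then univariate independence in $t$ finishes it. Applying this on each cell of $\mathsf{T}_{\mathrm{ext}}$, and noting every open subset of $\hat\Omega$ contains such a rectangle, yields local linear independence of the full T-spline basis.

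The main obstacle is the middle step: rigorously establishing that on each cell the active functions embed in a common tensor-product B-spline space, with \emph{matched} univariate factors. This is precisely the place where the hypothesis cannot be dropped, and it requires a careful bookkeeping argument about how Algorithm~\ref{alg:local-kv} assigns local knot vectors — showing that the "skipping" of indices (due to missing perpendicular edges) is controlled by the extension structure, so that two anchors lying in the same row of cells of $\mathsf{T}_{\mathrm{ext}}$ receive the same horizontal local knot vector restricted to that cell's span. I expect to handle this by invoking the dual-compatibility machinery of \citet{LiScSe12} as a black box; reproving it from scratch would essentially duplicate that paper.
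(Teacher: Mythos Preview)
The paper does not actually prove this theorem: it is stated without proof and attributed to external references~\cite{EvScLiTh13,LiScSe12}. Your proposal therefore goes further than the paper itself, and the approach you outline --- reduce to cells of the extended T-mesh $\mathsf{T}_{\mathrm{ext}}$, invoke the dual-compatibility structure of analysis-suitable T-splines from \citet{LiScSe12} to embed the active blending functions on each cell into a genuine tensor-product B-spline space, and then finish with the classical local linear independence of tensor-product B-splines --- is exactly the argument developed in the cited literature. Your identification of the embedding step as the crux, and your decision to treat it as a black box from \citet{LiScSe12}, is appropriate; that is where all the combinatorial work lives, and reproducing it would indeed duplicate that paper. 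One small point: your reduction-to-cells paragraph is a little tangled. The clean statement is that if independence holds on every cell of $\mathsf{T}_{\mathrm{ext}}$, then for an arbitrary open $\hat\Omega'$ and any $A$ with $\operatorname{supp} N_A \cap \hat\Omega' \neq \emptyset$, there is a cell $\hat e \subset \hat\Omega'$ on which $N_A$ is active; restricting a vanishing combination to $\hat e$ kills $c_A$ by cell-wise independence, and since $A$ was arbitrary all coefficients vanish.
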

\begin{theorem}\label{thm:ast-complete}
  The basis functions of an analysis-suitable T-spline form a complete
  basis for the space of polynomials of degree $\vec{p}$.
\end{theorem}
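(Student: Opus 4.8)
The statement asserts that every polynomial of degree $\vec{p}$ — i.e.\ every element of $\mathbb{P}_{\vec p}:=\mathbb{P}_{p_1}\otimes\mathbb{P}_{p_2}$ — belongs to the span of the analysis-suitable T-spline blending functions $\{N_A\}_{A=1}^{n}$; combined with \cref{thm:ast-lin-indep}, which gives that the $N_A$ are locally, hence globally, linearly independent, this is exactly the claim that they constitute a basis of a space containing $\mathbb{P}_{\vec p}$. The plan is to build a dual family of functionals $\{\lambda_A\}$ with $\lambda_A(N_B)=\delta_{AB}$ from the local knot vectors produced by \cref{alg:local-kv}, and then to verify that the induced quasi-interpolant $Qf=\sum_A\lambda_A(f)\,N_A$ is the identity on $\mathbb{P}_{\vec p}$, so that any such polynomial $q$ satisfies $q=\sum_A\lambda_A(q)\,N_A\in\operatorname{span}\{N_A\}$.

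The one-dimensional tool is Marsden's identity. For the local knot vector $\mathsf{g}_{A,\ell}=\{\tau_1\le\cdots\le\tau_{p_\ell+2}\}$ in the $\ell$-th parametric direction I would take $\lambda_{A,\ell}$ to be the de Boor--Fix functional based on $\mathsf{g}_{A,\ell}$ — a fixed linear combination of derivatives of its argument evaluated at one interior point of $\operatorname{supp}N_{\mathsf{g}_{A,\ell}}$ — which is dual to $N_{\mathsf{g}_{A,\ell}}$ against any other local B-spline whose knot vector is \emph{compatible} with $\mathsf{g}_{A,\ell}$ (meaning their union is again an admissible knot vector) and which reproduces $\mathbb{P}_{p_\ell}$ on each knot interval. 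The candidate two-dimensional dual functional is then the tensor product $\lambda_A=\lambda_{A,1}\otimes\lambda_{A,2}$.

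The heart of the argument is converting analysis-suitability into the compatibility needed for $\lambda_A(N_B)=\delta_{AB}$. Because \cref{alg:local-kv} reads each local knot vector off the T-mesh along the anchor line through $A$, two blending functions $N_A$ and $N_B$ with overlapping supports can have incompatible local knot vectors simultaneously in \emph{both} directions only when a vertical T-junction extension meets a horizontal one in the extended mesh $\mathsf{T}_{\mathrm{ext}}$ — and this is precisely what the analysis-suitable condition forbids. Hence for every overlapping pair there is a direction $\ell$ in which $\mathsf{g}_{A,\ell}$ and $\mathsf{g}_{B,\ell}$ are compatible, so the tensor-product duality $\lambda_A(N_B)=\delta_{AB}$ holds and $Q$ is a well-defined projector onto $\operatorname{span}\{N_A\}$. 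Polynomial reproduction then follows element by element: fixing a \Bezier element $e$, the blending functions with support on $e$ restrict there to exactly $n_b=(p_1+1)(p_2+1)$ linearly independent polynomials of $\mathbb{P}_{\vec p}$ — equivalently, the element extraction operator of $e$ is square and invertible, using \cref{thm:ast-lin-indep} together with the local tensor-product structure of $\mathsf{T}_{\mathrm{ext}}$ near $e$ — so they form a basis of $\mathbb{P}_{\vec p}$ restricted to $e$, on which the $\lambda_A$ act as ordinary tensor-product de Boor--Fix functionals; therefore $(Qq)|_e=q|_e$ for all $q\in\mathbb{P}_{\vec p}$, and since $e$ was arbitrary $Qq=q$ globally.

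The step I expect to be the main obstacle is the implication \emph{analysis-suitable $\Rightarrow$ dual-compatible}: one must show rigorously that the combinatorial hypothesis (no orthogonal T-junction extensions intersect) forces, for every pair of overlapping anchors, the algebraic compatibility of their local knot vectors in at least one parametric direction, \emph{and} that this pairwise local compatibility is enough to pin down a single, globally consistent family $\{\lambda_A\}$. A subsidiary obstacle is the lemma that each \Bezier element of an analysis-suitable T-mesh carries exactly $n_b$ blending functions — i.e.\ that its element extraction operator is invertible — which underpins the elementwise reproduction argument and itself requires a careful accounting of the face and edge extensions of the surrounding T-junctions.
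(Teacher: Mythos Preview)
The paper does not actually prove \cref{thm:ast-complete}: it is stated together with \cref{thm:ast-lin-indep} and \cref{thm:nested} as a known property of analysis-suitable T-splines, with proofs deferred to the cited references. So there is no ``paper's own proof'' to compare against here.

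That said, your sketch is essentially the argument developed in that literature. The key construction there is precisely the tensor-product de Boor--Fix functional attached to each anchor's local knot vectors, and the central technical lemma is exactly the implication you flag as the main obstacle: that the combinatorial definition of analysis-suitability (no orthogonal T-junction extensions intersect) forces every pair of overlapping blending functions to have compatible local knot vectors in at least one direction --- the so-called \emph{dual-compatible} property. Once that is in hand, duality $\lambda_A(N_B)=\delta_{AB}$ and polynomial reproduction follow from the one-dimensional Marsden identity applied direction by direction, as you outline. Your identification of the two obstacles --- analysis-suitable $\Rightarrow$ dual-compatible, and the invertibility of the element extraction operator (equivalently, exactly $(p_1+1)(p_2+1)$ functions supported on each \Bezier element) --- is accurate, and both are established in the cited works rather than in this paper.
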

\begin{theorem}
\label{thm:nested}
  The analysis-suitable T-spline spaces $\fspace{T}^a$ and
  $\fspace{T}^b$ are nested (i.e.,
  $\fspace{T}^a\subseteq\fspace{T}^b$) if
  $\mathsf{T}_{\mathrm{ext}}^a\subseteq\mathsf{T}_{\mathrm{ext}}^b$,
  that is, if $\mathsf{T}_{\mathrm{ext}}^b$ can be constructed by
  adding edges to $\mathsf{T}_{\mathrm{ext}}^a$.
\end{theorem}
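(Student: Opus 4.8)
The plan is to reduce the claim to showing that every basis function $N_A^a$ of $\fspace{T}^a$ belongs to $\fspace{T}^b$; since $\fspace{T}^a$ is spanned by the $N_A^a$, this yields $\fspace{T}^a\subseteq\fspace{T}^b$. The first ingredient is a \emph{mesh lemma}: because $\mathsf{T}^a_{\mathrm{ext}}\subseteq\mathsf{T}^b_{\mathrm{ext}}$, the coarsest partition on which every function of $\fspace{T}^b$ restricts to a polynomial of degree $\vec{p}$ (the B\'ezier mesh of $\mathsf{T}^b$) refines the corresponding partition for $\fspace{T}^a$, so each cell of the $\mathsf{T}^b$ B\'ezier mesh lies inside a single cell of the $\mathsf{T}^a$ B\'ezier mesh. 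This is precisely where analysis-suitability enters through the face and edge extensions: a face extension certifies that every knot line that actually degrades the smoothness of some $\fspace{T}^a$ function over a region is realized as a genuine edge of $\mathsf{T}^a_{\mathrm{ext}}$, hence of $\mathsf{T}^b_{\mathrm{ext}}$, so that the restriction of $N_A^a$ to any cell $e$ of the $\mathsf{T}^b$ B\'ezier mesh is a single polynomial of degree $\vec{p}$.

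Granting the mesh lemma, the second step is an element-local change of basis. On each cell $e$ of the $\mathsf{T}^b$ B\'ezier mesh, \cref{thm:ast-complete} gives that the $\fspace{T}^b$ basis functions nonzero on $e$ span the degree-$\vec{p}$ polynomials over $e$, and \cref{thm:ast-lin-indep} gives that they are linearly independent there; hence they form a basis of that polynomial space, of cardinality $\prod_\ell(p_\ell+1)$. Since $N_A^a|_e$ is such a polynomial by the mesh lemma, it has a \emph{unique} expansion $N_A^a|_e=\sum_B \alpha^e_{A,B}\,N_B^b|_e$ over the functions $B$ active on $e$. In the language of B\'ezier extraction this is just the composition of the element reconstruction operator of $\fspace{T}^b$ on $e$ (the inverse of the element extraction operator, which for an analysis-suitable T-spline is square) with the Bernstein coefficients of $N_A^a|_e$; this is what makes the construction element-level and quadrature-free.

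The remaining and genuinely hard step is \emph{gluing}: one must show that for each fixed $B$ the coefficient $\alpha^e_{A,B}$ is the same for every cell $e$ on which $N_B^b$ is active, so that the element-local expansions assemble into a single global identity $N_A^a=\sum_B \alpha_{A,B}N_B^b\in\fspace{T}^b$; by connectedness of each support it suffices to treat face-adjacent pairs $e,e'$ and induct. The naive attempt --- move a small open set across the shared face $\gamma$ and invoke local linear independence (\cref{thm:ast-lin-indep}) --- does not close by itself, because the expansion valid on $e$ need not reproduce $N_A^a$ on $e'$: the difference $N_A^a-\sum_B\alpha^e_{A,B}N_B^b$ vanishes on $e$ and is a piecewise polynomial carrying the smoothness $\mathsf{T}^b$ prescribes across $\gamma$, but $C^{p_1-m}$ regularity at a knot line of multiplicity $m$ is not by itself enough to force it to vanish on $e'$. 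I would therefore close this step with a global argument that uses the refined structure from the mesh lemma: one shows that a piecewise polynomial of degree $\vec{p}$ with $\mathsf{T}^b$-smoothness that is compatible with $\mathsf{T}^b_{\mathrm{ext}}$ and vanishes on a single cell of this refinement must vanish identically. An alternative, possibly cleaner route is to factor the passage from $\mathsf{T}^a_{\mathrm{ext}}$ to $\mathsf{T}^b_{\mathrm{ext}}$ into a finite sequence of elementary edge insertions, each preserving analysis-suitability and each individually yielding a nested space, and then compose; the difficulty then migrates into showing such a factorization always exists. Either way, the main obstacle is this global consistency argument together with the mesh lemma it rests on: unlike classical knot insertion for B-splines, the local knot vectors of $\fspace{T}^a$ need not refine those of $\mathsf{T}^b$ in any elementwise way, so it is the analysis-suitability hypothesis --- entering through the extension bookkeeping and through \cref{thm:ast-lin-indep,thm:ast-complete} --- that carries the proof.
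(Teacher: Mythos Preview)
The paper does not actually prove this theorem. It is stated together with \cref{thm:ast-lin-indep,thm:ast-complete} as a known property of analysis-suitable T-splines, with the proofs deferred to the cited references~\cite{EvScLiTh13,LiScSe12}. So there is no in-paper proof for your proposal to be compared against.

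As for the proposal itself: the overall strategy (show each $N_A^a$ lies in $\fspace{T}^b$ by writing it element-by-element in the $\fspace{T}^b$ basis via extraction/reconstruction, then glue) is reasonable and is indeed the spirit of the refinement arguments in the T-spline literature. But what you have written is a plan, not a proof. You explicitly flag the two load-bearing pieces --- the mesh lemma (that the \Bezier mesh of $\mathsf{T}^b$ refines that of $\mathsf{T}^a$, and that every knot line reducing the smoothness of some $N_A^a$ appears as an edge of $\mathsf{T}^b_{\mathrm{ext}}$) and the gluing/consistency step --- and then do not prove either. For the gluing step you correctly diagnose why the naive local-linear-independence argument fails, but neither of your suggested closures (a global vanishing lemma for $\mathsf{T}^b$-smooth piecewise polynomials, or a factorization of $\mathsf{T}^a_{\mathrm{ext}}\subseteq\mathsf{T}^b_{\mathrm{ext}}$ into elementary AS-preserving edge insertions) is carried out, and both require nontrivial work that is precisely the content of the cited references. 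In short, the outline is sound, but the proposal stops exactly at the points where the actual mathematics happens.
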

We only consider analysis-suitable T-splines (ASTS) for the remainder of this work.
The spline space spanned by a T-spline basis defined by the T-mesh $\mathsf{T}^a$ is denoted by $\fspace{T}^a$.

\subsection{\Bezier extraction and spline reconstruction}
Although analysis-suitable T-splines possess the mathematical properties required by
analysis it is not immediately obvious how the basis can be integrated
into existing finite-element tools. 
One of the first issues that must be addressed is how a computational
mesh is obtained from the T-mesh. A simple and elegant solution to
this problem is based on \Bezier extraction introduced in Borden et
al.~\cite{Borden:2010nx} for NURBS and Scott et al.~\cite{ScBoHu10}
for T-splines. The \Bezier mesh $\mathsf{B}(\mathsf{T})$ is created by
adding the face extensions to $\mathsf{T}$ and then mapping the
resulting index mesh to the parametric domain. 
Note that for ASTS the edges in $\mathsf{B}(\mathsf{T})$ represent all
lines of reduced continuity in the T-spline basis. This makes the
\Bezier mesh the natural mesh for finite element analysis based on
T-splines since the basis is $C^{\infty}$ in the interior of each
\Bezier element. The image of the \Bezier mesh under the geometric
map~(\cref{eq:t-spl-geom-map}) generates the physical mesh. 
The \Bezier elements for a given mesh can be enumerated and so we refer to the elements by their index $e$.
The parametric domain of a \Bezier element $e$ is denoted by $\hat{\Omega}^e$
and the physical domain of a \Bezier element is denoted by $\Omega^e$.

\Bezier extraction generates the Bernstein-\Bezier representation of
the T-spline basis over each element. The resulting linear
relationship is encapsulated in the so-called \Bezier element
extraction operator denoted by $\mat{C}^e$.
Given the control values, $\vec{P}^e$, associated with
the spline basis functions which are nonzero over element $e$
the control values $\vec{Q}^e$ associated with the Bernstein basis
defined over the element are related to the spline control values using the
transpose of the element extraction operator
\begin{equation}
\label{eq:bez-ext-def}
\vec{Q}^e=\left( \mat{C}^e \right)^{\trans}\vec{P}^e.
\end{equation}
There is an element extraction operator associated with each element in the \Bezier mesh.
Algorithms for computing the element extraction operators for ASTS and
B-splines are given by \citet{Borden:2010nx} and
\citet{ScBoHu10}. \Bezier extraction is graphically demonstrated in~\cref{fig:bez-ext}.
The cubic B-spline curve and control points are shown in the upper
left and the corresponding B-spline basis in the lower left.
The basis is defined by the knot vector $[0,0,0,0,1,2,3,4,4,4,4]$.
The segment of the spline curve corresponding to the second element is
shown in the middle of the figure with its associated control points.
The spline basis supported by the second element is shown below. 
The \Bezier control points produced by the transpose of the element
extraction operator are shown in the upper right with the associated
Bernstein basis below.

\begin{figure}[htb]
  \centering
  \includegraphics[width=6in]{./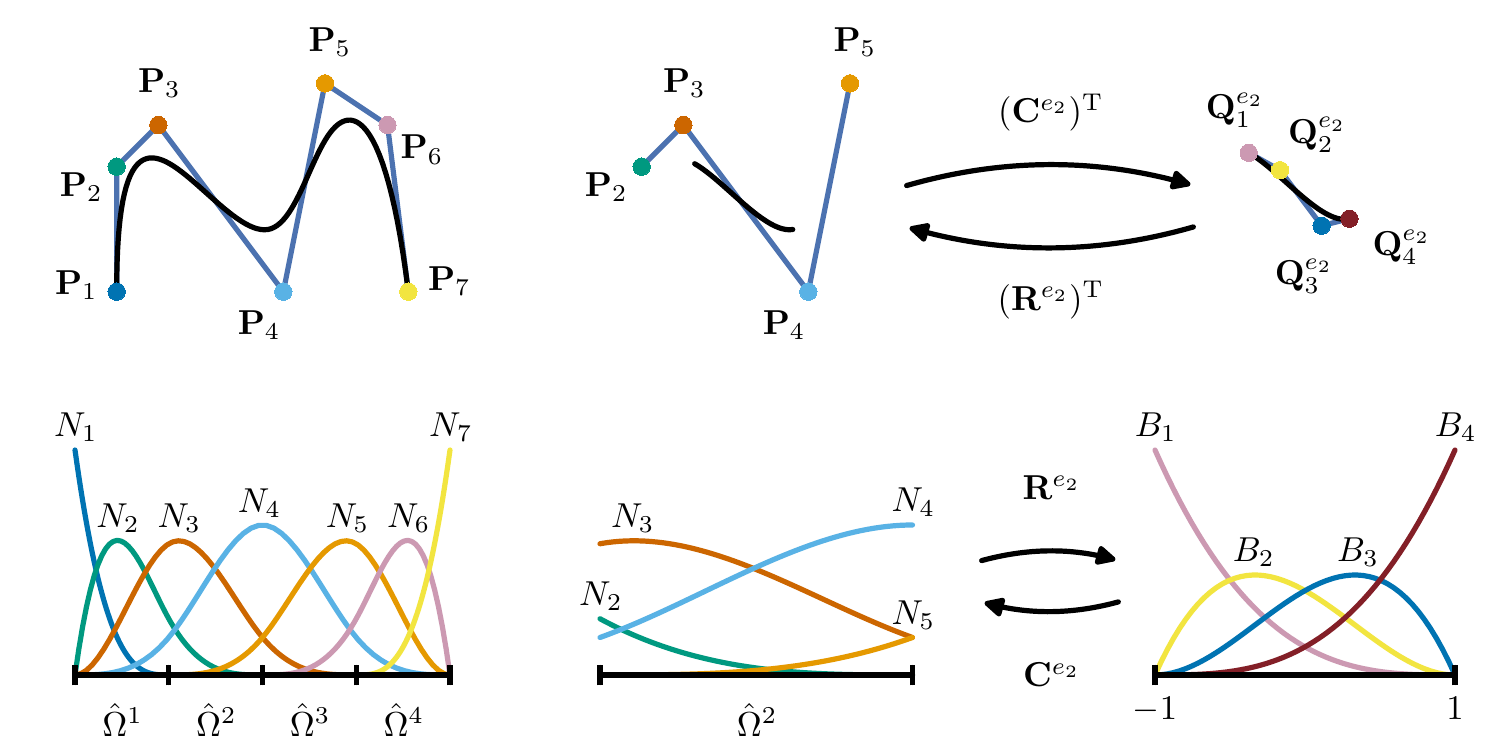}
  \caption{\label{fig:bez-ext}Illustration of the \Bezier element extraction
    operator $\mat{C}^e$ and the spline element reconstruction operator
    $\mat{R}^e$ for a spline of degree 3.  The Bernstein basis
    is shown over the biunit interval and so the spline basis segments
    are reconstructed by composing the Bernstein basis with the map
    from the biunit interval to the element.}
\end{figure}

\begin{lemma}\label{lemma:elem-ext-inv}
  The \Bezier element extraction operators for an analysis-suitable
  T-spline (and B-splines and NURBS) are invertible.
\end{lemma}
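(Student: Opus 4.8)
The plan is to identify $\mat{C}^e$ with the change-of-basis matrix between two bases of one finite-dimensional polynomial space — the Bernstein basis and the restriction to the element of those spline basis functions that are supported there — so that the lemma reduces to a dimension count together with the local linear independence and polynomial reproduction properties already available. I would first make the defining relation of $\mat{C}^e$ explicit as a statement about functions rather than coefficients. Writing the geometric map over a \Bezier element $e$ of positive parametric measure in the two ways $\vec{x}|_{\hat\Omega^e}=(\vec{P}^e)^{\trans}\vec{N}^e=(\vec{Q}^e)^{\trans}\vec{B}$ and combining with \cref{eq:bez-ext-def} forces $\vec{N}^e=\mat{C}^e\vec{B}$, where $\vec{N}^e$ collects the spline basis functions whose support meets $\hat\Omega^e$, restricted to $\hat\Omega^e$ and pulled back by the affine bijection onto the biunit box, and $\vec{B}$ is the multivariate Bernstein vector of \cref{eq:5}. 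The affine pullback is a linear isomorphism of the degree-$\vec{p}$ polynomial space, so it affects neither dimension nor independence; hence the rows of $\mat{C}^e$ are precisely the Bernstein coefficient vectors of the restricted spline functions, and $\mat{C}^e$ is invertible exactly when the entries of $\vec{N}^e$ form a basis for the space spanned by $\vec{B}$.

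Next I would establish that the rows of $\mat{C}^e$ are linearly independent. Since $\vec{B}$ is itself a basis (\cref{lemma:bern-lin-indep}), a relation $\vec{c}^{\trans}\mat{C}^e=\vec{0}$ is equivalent to $\vec{c}^{\trans}\vec{N}^e=0$ on $\hat\Omega^e$; as $\hat\Omega^e$ is a nonempty open set, this contradicts the local linear independence of the basis unless $\vec{c}=\vec{0}$. Local linear independence is \cref{thm:ast-lin-indep} in the analysis-suitable T-spline case and is the stated local linear independence property of the B-spline basis, which NURBS inherit because the projective construction of \cref{sec:rat-univ-splines} uses the extraction operator of the underlying B-spline. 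Thus $\mat{C}^e$ has full row rank, and the number $n^e$ of spline functions supported on the element is at most $n_b=\prod_{\ell=1}^{d_p}(p_\ell+1)$.

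Then I would show the restricted spline functions span the whole degree-$\vec{p}$ polynomial space on $\hat\Omega^e$. Each $N_A$ restricted to a \Bezier element is a single polynomial of degree $\vec{p}$ by construction of the \Bezier mesh, so $\operatorname{span}\vec{N}^e$ is contained in that space. Conversely, by \cref{thm:ast-complete} every degree-$\vec{p}$ polynomial equals a global combination of spline basis functions; restricting that combination to $\hat\Omega^e$ annihilates every term whose support misses the element, leaving a combination of the entries of $\vec{N}^e$. Hence $\operatorname{span}\vec{N}^e$ equals the degree-$\vec{p}$ polynomial space on $\hat\Omega^e$, which by \cref{lemma:bern-lin-indep} has dimension $n_b$. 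Combining with the previous step, $\vec{N}^e$ is a linearly independent spanning set of an $n_b$-dimensional space, so $n^e=n_b$ and $\mat{C}^e$ is a square $n_b\times n_b$ matrix of full rank, hence invertible. The B-spline and NURBS cases are the tensor-product specialization of this argument, in which the count $n^e=n_b$ is also the classical statement that each knot span carries exactly $p_\ell+1$ nonzero basis functions in direction $\ell$.

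I expect the only genuine subtlety to be the spanning step: one must check that the completeness of \cref{thm:ast-complete}, stated over the whole parametric domain, does localize correctly to a single \Bezier element, and that no spurious contribution enters from the zero-parametric-area cells bounded by repeated knots — which is why the argument is confined to \Bezier elements of positive parametric measure, exactly the elements on which the Bernstein basis is a genuine basis and the extraction operator is defined. Everything else is bookkeeping about supports and a dimension count.
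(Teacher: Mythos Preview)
Your proposal is correct and follows essentially the same approach as the paper: both argue that the Bernstein basis and the restricted spline basis are bases of the same polynomial space over the element, so $\mat{C}^e$ is a change-of-basis matrix and hence invertible. Your version is considerably more careful than the paper's two-sentence sketch—you explicitly separate the linear-independence step (via \cref{thm:ast-lin-indep}) from the spanning step (via \cref{thm:ast-complete}), make the dimension count $n^e=n_b$ explicit, and flag the restriction to elements of positive parametric measure—whereas the paper simply asserts that both sets are ``linearly independent and complete'' and cites \cref{lemma:bern-lin-indep,thm:ast-lin-indep} without explicitly invoking \cref{thm:ast-complete} for the spanning direction.
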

\begin{proof}
  The element extraction operator provides a map from the
  Bernstein polynomials to the T-spline basis functions over the
  element. Both sets are linearly independent and complete
  (\cref{lemma:bern-lin-indep,thm:ast-lin-indep}), therefore the
  element extraction operator is invertible. 
\end{proof}
The inverse of the \Bezier element extraction operator and its significance have not been considered previously.
It can be seen in \cref{fig:bez-ext} that the inverse transpose of the
\Bezier element extraction operator provides a means to convert the
\Bezier control points into spline control points. 
For this reason, we have termed the inverse of the \Bezier element
extraction operator the {\it spline element
  reconstruction operator}
\begin{equation}
\label{eq:rec-op-def}
\mat{R}^e\equiv(\mat{C}^{e})^{-1}
\end{equation}
or element reconstruction operator for short.
Whereas the \Bezier element extraction operator ``extracts'' \Bezier
coefficients the spline element reconstruction operator converts
\Bezier coefficients into spline coefficients, thus ``reconstructing''
the spline segment. 
Additionally, the element reconstruction operator can be used to
express the Bernstein basis in terms of the spline basis defined over
element $e$. The element reconstruction operator is a core component of
the \Bezier projection method developed in this paper.

\section{\Bezier projection}\label{sec:localized-projection}
\label{sec-2}
Given function spaces $\fspace{A}$ and $\fspace{B}$, we use
$\Pi[\fspace{A},\fspace{B}]$ to represent the projection from a
function in $\fspace{A}$ to a function in $\fspace{B}$. 
If $\fspace{A}\subseteq\fspace{B}$, then the projection is exact or injective.
Where the meaning is unambiguous, we use $\Pi[\fspace{B}]$ to
represent the projection onto $\fspace{B}$ or $\Pi$ to denote a
general projection. The definition of a projector requires that for
$f\in \fspace{A}$, $\Pi[\fspace{A}](f) = f$. 

We define our projection problem as follows: given a function $f$ in
some function space $\fspace{F}$ mapping a domain $\Omega\subset
\mathbb{R}^m$ to $\mathbb{R}^n$ and a discrete space of spline
functions, $\fspace{T}$, mapping $\hat{\Omega}\subset \mathbb{R}^m$ to
$\mathbb{R}$, find a set of coefficients or functionals (or vector of
functionals if $n>1$) $\{\lambda_i(f):\fspace{F}\rightarrow
\mathbb{R}^n\}$ such that the function given by
\begin{equation}
  \Pi[\fspace{F},\fspace{T}](f) = \sum_{A}\lambda_A(f)N_A,
\end{equation}
where $N_A$ are the basis functions of $\fspace{T}$ and
$\lambda_A(f)\in \mathbb{R}^n$ is the coefficient associated with the
$A$th basis function, approximates $f$ in some sense. 
The optimal projector returns the coefficients $\lambda_i(f)$ that
minimize the error with respect to the $L^k$ norm over the domain
$\Omega$ 
\begin{equation}
  \epsilon_k = \|f - \Pi(f)\|_k
\end{equation}
where $\|f\|_k = \left(\int_{\Omega}|f|^kd\Omega \right)^{1/k}$.
It is standard to use the $L^2$ norm.

If the domain $\Omega$ over which the function $f$ is defined does not
coincide with the domain $\hat{\Omega}$ over which the spline basis is
defined, a function mapping $\hat{\Omega}$ to $\Omega$,
$\psi:\hat{\Omega}\rightarrow\Omega$ must be introduced and the
composition $f\circ\psi$ is projected onto the spline basis. 
When projecting onto a spline basis over a geometry, this map is the
geometric map $\vec{x}(\vec{s})$ that defines the
geometry. For simplicity of exposition, in this section we assume that
the two domains coincide, that is $\Omega=\hat{\Omega}$.

In general, the functionals $\lambda_i$ of the global projection problem
require integration over the entire domain $\Omega$ and solution of a
linear system of the same size as the dimension of the spline space
$\fspace{T}$. A localized projection or quasi-interpolation \cite{sablonniere2005} is defined by choosing functionals $\lambda_i$
that can be determined from function values over a subdomain
$\Omega^{\prime}\subset \Omega$ and that do not require the solution
of a linear system of the same size as the spline space.

\subsection{Formulation of \Bezier projection}
We introduce \Bezier projection as a localized projection operation
that uses a linear combination of projections onto the element Bernstein
basis. Given a map
$\phi_e:[-1,1]\rightarrow\hat{\Omega}^e\subseteq\hat{\Omega}$ from the
biunit interval to the knot interval (or element) $e$, we define the
projector $\Pi^e[\fspace{B}^p]:\fspace{F}\rightarrow \fspace{B}^p$ of
a function $f\in\fspace{F}$ over the interval $e$ onto the Bernstein
basis over the biunit interval as
\begin{equation}
  \Pi^e[\fspace{B}^p](f) = \sum_{i=1}^{p+1}\beta_i^{p,e}(f) B_{i}^p
\end{equation}
where the functionals
$\beta_i^{p,e}:\fspace{F}\rightarrow\mathbb{R}^n$ are obtained from
the projection of $f\circ\phi_e$ onto the Bernstein basis of degree
$p$. When the meaning is unambiguous, we suppress the superscript $p$ for the polynomial degree.
Note that the original function is defined over the domain $\Omega = \hat{\Omega}$
while the definition of local projection $\Pi^e[\fspace{B}^p]$
presented here introduces a transformation so that the projected
segment of $f$ is represented over the biunit interval. 
%It is often convenient to define the vector of functionals or Bernstein coefficients $\vec{\beta}^e(f)$.
The functionals $\beta_i^{p,e}$ with respect to the $L^2$ norm are found from the linear system
\begin{equation}
\label{eq:bernstein-proj}
\sum_{j=1}^{p+1}(B_i^p,B_j^p)\beta_j^{p,e}(f)=(B_i^p,f\circ \phi_e)
\end{equation}
where the $L^2$ inner product is defined as
\begin{equation}
\label{eq:inner-prod}
(f,g)=\int_{-1}^1f(\xi)g(\xi)d\xi.
\end{equation} 
If $\Omega \neq \hat{\Omega}$ it would be necessary to introduce an
additional transformation $\psi:\hat{\Omega}\rightarrow\Omega$. In
this case, the second entry in
the inner product on the right-hand side of \cref{eq:bernstein-proj}
would be $f\circ\psi\circ\phi_e$.
The solution to \cref{eq:bernstein-proj} can be written as
\begin{equation}
\label{eq:bernstein-proj-sol-mat}
\vec{\beta}^{e}=\mat{G}^{-1}\vec{b}
\end{equation}
by using the inverse of the Gramian matrix and defining the vector of functionals (Bernstein coefficients)
$\vec{\beta}^e(f)=\left\{ \beta_i^{p,e}(f) \right\}_{i=1}^{p+1}$ and
the vector of basis function-function inner products $\vec{b}=\left\{
  (B_i^p,f\circ\phi_e) \right\}_{i=1}^{p+1}$.
The inverse of the matrix $\mat{G}$ is given in closed form by
\cref{eq:inv-gramian-expr} and so the solution can be computed
directly without a numerical solution step. 

\Bezier extraction can be interpreted as a projection of a spline
basis function onto the Bernstein basis:
\begin{equation}\label{eq:alt-ext-def}
  \Pi^e[\fspace{B}^p](N_i) = \sum_{i=1}^{p+1}c_{ij}^eB_{i}^p,
\end{equation}
where $c_{ij}^e$ are the entries of the element extraction operator $\mat{C}^e$.
Recall that the function $N_i$ is defined over the parametric domain
of the spline $\hat{\Omega}$ while the Bernstein representation on the
right-hand side of~\cref{eq:alt-ext-def} is defined over the biunit
interval. The coefficients of the spline basis functions, denoted by
$\lambda_A(f)$, over element $e$ 
are related to the \Bezier coefficients of the Bernstein basis by the
element reconstruction operator
\begin{equation}\label{eq:elem-lambda-def}
  \vec{\lambda}^e(f) = (\mat{R}^e)^{\trans}\vec{\beta}^e(f).
\end{equation}

\subsubsection{\Bezier projection weighting scheme}
In general, the control value for a given function produced by \Bezier
projection will be different for each element in the support of the
function. These values must be averaged, selected, or combined in some
way to generate a unique global control point. We choose to construct
the global set of coefficients from a weighted sum of the local
coefficients
\begin{equation}\label{eq:bez-proj-global-coeffs}
  \lambda_A(f) = \sum_{e\in \mathsf{E}_A} \omega_A^e \lambda_A^e(f)
\end{equation}
where $\mathsf{E}_A$ contains the elements in the support of the $A$th
basis function and $\lambda_A^e$ represents the coefficient of basis function $A$ on element $e$.
These results can be combined to express the full \Bezier projection as
\begin{equation}
\Pi _{B}[\fspace{F},\fspace{T}](f) = \sum_A\left[\sum_{e\in
    \mathsf{E}_A} \omega_A^e \lambda_A^e(f)\right]N_A. 
\end{equation}
To guide the development of our weighting scheme we recall that
\begin{lemma}\label{thm:same-coeffs}
  For a spline function $T\in \fspace{T}$ we have that $\lambda_A^e(T)
  = \lambda_A(T)$ for all $e\in \mathsf{E}_A$.
\end{lemma}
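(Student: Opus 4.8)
The plan is to show something slightly stronger than asked: \Bezier projection reproduces any $T\in\fspace{T}$ \emph{exactly on every element}, before any averaging step is applied; the claimed equality of local coefficients is then immediate. The only ingredients are the projector property of $\Pi^e[\fspace{B}^p]$ on $\fspace{B}^p$ and the fact that $\mat{R}^e$ inverts $\mat{C}^e$.

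First I would record that the global coefficients $\lambda_B(T)$ are well defined: by \cref{thm:ast-complete,thm:ast-lin-indep} (and global linear independence in the B-spline and NURBS cases) the expansion $T=\sum_B\lambda_B(T)N_B$ is unique. Fix an element $e$ and let $\vec{P}^e$ be the vector collecting $\lambda_B(T)$ over exactly those $B$ with $e\in\mathsf{E}_B$; since every other basis function vanishes on $\hat{\Omega}^e$, the restriction of $T$ to $\hat{\Omega}^e$ is $\sum_{B:\,e\in\mathsf{E}_B}\lambda_B(T)N_B$ there. Next I would invoke the defining property of the \Bezier mesh: the T-spline (resp.\ B-spline) basis is $C^\infty$ in the interior of each \Bezier element, so $T$ restricted to $\hat{\Omega}^e$ is a polynomial of degree $\vec{p}$, and since $\phi_e$ is (tensor-product) affine, $T\circ\phi_e\in\fspace{B}^p$. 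Because $\Pi^e[\fspace{B}^p]$ is the $L^2$ projector onto $\fspace{B}^p$, it fixes $\fspace{B}^p$ pointwise, hence $\Pi^e[\fspace{B}^p](T)=T\circ\phi_e$, and by uniqueness of Bernstein coefficients (\cref{lemma:bern-lin-indep}) the vector $\vec{\beta}^e(T)$ is precisely the Bernstein-coefficient vector of $T\circ\phi_e$. But \Bezier extraction (\cref{eq:bez-ext-def}, equivalently \cref{eq:alt-ext-def}) says exactly that this Bernstein-coefficient vector is $(\mat{C}^e)^{\trans}\vec{P}^e$. Therefore $\vec{\beta}^e(T)=(\mat{C}^e)^{\trans}\vec{P}^e$.

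I would then finish with the reconstruction step, using $\mat{R}^e=(\mat{C}^e)^{-1}$ from \cref{lemma:elem-ext-inv,eq:rec-op-def} together with \cref{eq:elem-lambda-def}:
\begin{equation*}
\vec{\lambda}^e(T)=(\mat{R}^e)^{\trans}\vec{\beta}^e(T)=(\mat{R}^e)^{\trans}(\mat{C}^e)^{\trans}\vec{P}^e=\bigl(\mat{C}^e\mat{R}^e\bigr)^{\trans}\vec{P}^e=\vec{P}^e ,
\end{equation*}
so $\lambda_A^e(T)=\lambda_A(T)$ for every $A$ with $e\in\mathsf{E}_A$, i.e.\ for all $e\in\mathsf{E}_A$, which is \cref{thm:same-coeffs}.

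The main obstacle is not the algebra, which collapses to one line once the objects are aligned, but pinning down and citing correctly the two structural facts that carry the argument: (i) that $T$ pulled back onto the biunit interval over a \Bezier element genuinely lies in $\fspace{B}^p$, which is where the construction of $\mathsf{B}(\mathsf{T})$ from the face extensions (and, for T-splines, the analysis-suitable hypothesis) is used; and (ii) that the $L^2$ Bernstein projection reproduces $\fspace{B}^p$, so that $\vec{\beta}^e(T)$ can be identified with the \emph{extracted} \Bezier coefficients of $T$ rather than merely an approximation of them. I would also add a closing remark that this lemma is exactly what motivates imposing the normalization $\sum_{e\in\mathsf{E}_A}\omega_A^e=1$ on the weights in \cref{eq:bez-proj-global-coeffs}, since under it $\sum_{e\in\mathsf{E}_A}\omega_A^e\lambda_A^e(T)=\lambda_A(T)$ and \Bezier projection is a genuine projector (it fixes $\fspace{T}$).
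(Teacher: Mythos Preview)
Your proof is correct and takes essentially the same approach as the paper, which gives only a one-line justification (``This follows directly from the definition of B\'{e}zier extraction and the element extraction operator and its invertibility''). You have simply unpacked that sentence into its constituent steps: the restriction of $T$ to a B\'{e}zier element is a polynomial so the local $L^2$ projection reproduces it exactly, B\'{e}zier extraction identifies the resulting Bernstein coefficients as $(\mat{C}^e)^{\trans}\vec{P}^e$, and the reconstruction operator inverts this to recover $\vec{P}^e$.
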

\begin{proof}
  This follows directly from the definition of Bézier extraction and the element
  extraction operator and its invertibility. 
\end{proof}
\begin{lemma}\label{thm:weight-sum}
If $\sum_{e = \mathsf{E}_A}\omega_A^e = 1$ then $\Pi _{B}$ is a projector.
\end{lemma}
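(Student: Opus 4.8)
The plan is to verify the defining property of a projector stated earlier in this section: for every $T\in\fspace{T}$ one must have $\Pi_B[\fspace{F},\fspace{T}](T)=T$. Since $\Pi_B(f)=\sum_A\lambda_A(f)N_A$ always lies in $\fspace{T}$ by construction, reproduction on $\fspace{T}$ automatically yields idempotency, $\Pi_B\circ\Pi_B=\Pi_B$, so establishing this identity is exactly what is required.

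First I would fix $T\in\fspace{T}$ and write $T=\sum_A\lambda_A(T)N_A$, where the coefficients $\lambda_A(T)$ are the (unique, by global linear independence of the basis) spline coefficients of $T$. The essential ingredient is \cref{thm:same-coeffs}, which gives $\lambda_A^e(T)=\lambda_A(T)$ for every $A$ and every $e\in\mathsf{E}_A$. For completeness I would recall why: the pullback of $T$ to element $e$ is a polynomial of degree $p$, hence already an element of $\fspace{B}^p$, so the local least-squares system \cref{eq:bernstein-proj} reproduces it exactly and $\vec{\beta}^e(T)$ coincides with the \Bezier coefficient vector $(\mat{C}^e)^{\trans}\vec{P}^e$ of \cref{eq:bez-ext-def}; applying the reconstruction operator in \cref{eq:elem-lambda-def} and invoking invertibility of the element extraction operator (\cref{lemma:elem-ext-inv}) then returns $\vec{\lambda}^e(T)=(\mat{R}^e)^{\trans}(\mat{C}^e)^{\trans}\vec{P}^e=\vec{P}^e$, i.e. the original spline coefficients on that element.

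Substituting this into the weighting rule \cref{eq:bez-proj-global-coeffs} and using that $\lambda_A^e(T)$ is independent of $e$, the constant factors out of the sum and the hypothesis $\sum_{e\in\mathsf{E}_A}\omega_A^e=1$ gives
\begin{equation}
\sum_{e\in\mathsf{E}_A}\omega_A^e\,\lambda_A^e(T)=\left(\sum_{e\in\mathsf{E}_A}\omega_A^e\right)\lambda_A(T)=\lambda_A(T),
\end{equation}
so that $\Pi_B[\fspace{F},\fspace{T}](T)=\sum_A\lambda_A(T)N_A=T$, as desired. I do not expect a genuine obstacle in this argument: the statement is essentially a bookkeeping consequence of \cref{thm:same-coeffs} together with the normalization of the weights. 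The only points needing care are that each support set $\mathsf{E}_A$ is nonempty so the weighted averages are well defined, that linearity of the local projectors $\Pi^e[\fspace{B}^p]$ and of the reconstruction operators lets the argument proceed coefficient by coefficient, and that one must not inadvertently appeal to optimality of $\Pi_B$ but only to exactness of the local Bernstein projection on $\fspace{B}^p$ and to \cref{lemma:elem-ext-inv}, both available from earlier results.
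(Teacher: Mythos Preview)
Your argument is correct and, in fact, is the natural direct verification of the stated implication: assuming $\sum_{e\in\mathsf{E}_A}\omega_A^e=1$, you use \cref{thm:same-coeffs} to factor the constant $\lambda_A(T)$ out of the weighted sum and conclude $\Pi_B(T)=T$ for every $T\in\fspace{T}$.

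It is worth noting that the paper's own proof does not argue this direction. The paper reasons that if the weights do \emph{not} sum to one then the weighted average of the local coefficients cannot return the global coefficient, so $\Pi_B$ fails to reproduce $T$; that is the contrapositive of the converse (``if $\Pi_B$ is a projector then the weights sum to one''), i.e.\ a necessity argument rather than the sufficiency asserted in the lemma statement. Your proof supplies the sufficiency direction that matches the lemma as written, and it does so cleanly by invoking exactly the preceding \cref{thm:same-coeffs}. The paper's argument, read charitably, establishes that the normalization condition is also necessary, so the two together amount to an ``if and only if''; but as a proof of the lemma as stated, yours is the one that actually does the job.
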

\begin{proof}
  Given a function $T\in \fspace{T}$ if the weights do not sum
  to one then the weighted sum of the
  local coefficients cannot be equal to the global coefficient and $\Pi_B[\fspace{T}](T)
  \neq T$ so $\Pi_B$ is not a projector.
\end{proof}
We note that by choosing the weights as
$\omega_A^e=1/n^e_A$ where $n^e_A$ is the number of elements in the
support of the $A$th basis function, the local least-squares method of 
\citet{govindjee2012} is obtained; however, it will be seen that the weighting proposed here
provides significantly increased accuracy in the results. 

A particularly accurate choice of weights is
\begin{equation}
  \label{eq:weight-def}
  \omega_A^e = \frac{\int_{\Omega^e}N_Ad\Omega}{\sum_{e' = \mathsf{E}_A}\int_{\Omega^{e'}}N_A d\Omega}.
\end{equation}
When projecting a geometry onto a new basis, the spatial domain
$\Omega_A$ is not defined and the parametric domain $\hat{\Omega}_{A}$
is used instead. The weights obtained by this method for the basis functions defined by
the local knot vectors $[0,0,0,\sfrac{1}{3}]$,
$[0,0,\sfrac{1}{3},\sfrac{2}{3}]$, $[0,\sfrac{1}{3},\sfrac{2}{3},1]$,
$[\sfrac{1}{3},\sfrac{2}{3},1,1]$, and $[\sfrac{2}{3},1,1,1]$ are
shown in \cref{fig:weights}. 
\begin{figure}[htb]
  \centering
  \includegraphics[width=5in]{./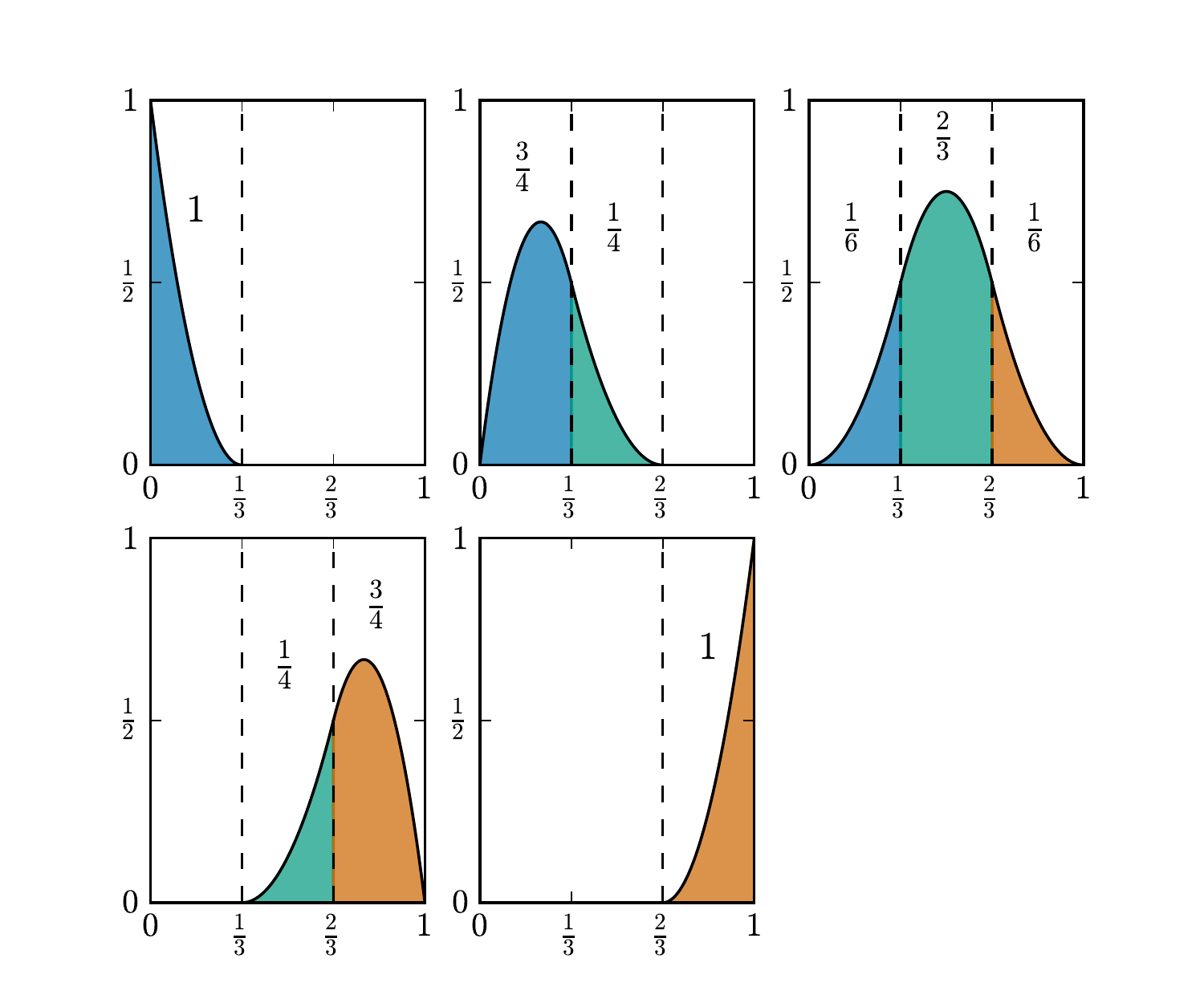}
  \caption{\label{fig:weights}Weights over each knot span associated
    with the basis function defined by the local knot vectors
    $[0,0,0,\sfrac{1}{3}]$, $[0,0,\sfrac{1}{3},\sfrac{2}{3}]$,
    $[0,\sfrac{1}{3},\sfrac{2}{3},1]$,
    $[\sfrac{1}{3},\sfrac{2}{3},1,1]$, and $[\sfrac{2}{3},1,1,1]$.} 
\end{figure}

The individual steps comprising the \Bezier projection algorithm are
illustrated in \cref{fig:loc-proj-example}. 
The curve defined by $\vec{f}(t)=\left( \frac{t}{3}
\right)^{3/2}\vec{e}_1+\frac{1}{10}\sin (\pi t )\,\vec{e}_2$,
$t\in[0,3]$ is projected onto the quadratic B-spline basis defined by
the knot vector $[0,0,0,\sfrac{1}{3},\sfrac{2}{3},1,1,1]$.
Because the domain of the curve parameter $t$ does not coincide with
the parametric domain of the spline, we introduce the affine map
$\psi:[0,1]\rightarrow[0,3]$ and project $\vec{f}\circ\psi$ onto the
spline basis using \Bezier projection. 
All of the basis functions are shown in \cref{fig:weights} along with
the weight associated with each function over each element. 
The first step is to perform a projection onto the Bernstein basis for
each element to obtain the local \Bezier coefficients that define an
approximation to the target function over the element. 
The \Bezier control points are indicated in part (1) of the figure by 
square markers that have been colored to match the corresponding
element. The local approximation to the target function is shown along
with the control points. 
Because the local \Bezier control points are interpolatory at the ends
of the segments, it can be seen from the placement of the \Bezier
control points associated with adjacent segments that the \Bezier
curve segments are discontinuous. 

Next, the element reconstruction operator is used to convert the
\Bezier control points into spline control points associated with the
basis function segments over each element. 
This operation does not change the discontinuous segments that
approximate the target function, but rather changes the basis used to
represent those segments from the Bernstein basis to the spline
basis. The new control points are marked with inverted triangles and
again colored to indicate the element with which the control point is
associated. The control points occur in clusters.
The clusters of control points represent the contributions from
multiple elements to a single spline basis function control point. 
The endpoints have contributions from a single element.
The points to the right and left of either endpoint have contributions from two elements.
It can be seen that the center control point contains contributions from each of the 3 elements of the mesh.
Again, the discontinuous nature of the approximation segments can be
discerned by observing that the control points are slightly
scattered.

Each cluster of control points must be combined (averaged) to obtain a
single control point associated with the respective basis function. 
A weighted average of the points in each cluster is computed using the
weighting given in \cref{eq:weight-def} and the resulting control
points are shown as circles with the relative contribution from each
element to each control point indicated by the colored fraction of the
control point marker. The weights are those from~\cref{fig:weights}.
The colors in \cref{fig:weights,fig:loc-proj-example} are coordinated
to illustrate where the averaging weights come from and their values.
To summarize, 
\begin{algorithm}\label{alg:local-proj}
  \Bezier projection onto a spline basis.
  \begin{enumerate}
  \item Compute the projection of the target function $f$ onto the
    Bernstein basis on each element to obtain the vector of local
    \Bezier control values (\cref{eq:bernstein-proj}). 
  \item Use the transpose of the element reconstruction operator
    to convert the local \Bezier values to spline control values for
    the element segments (\cref{eq:elem-lambda-def}). 
  \item Use the weighting defined in \cref{eq:weight-def} to compute
    the global spline coefficients from the local spline control
    values as given by \cref{eq:bez-proj-global-coeffs}. 
  \end{enumerate}
\end{algorithm}
Alternatively, steps 1 and 2 of \cref{alg:local-proj} can be combined
by projecting the target function $f$ directly onto the local segments
of the global spline basis. 

\begin{figure}[htb]
  \centering
  \begin{tabular}{c p{4in} p{2in}}
    (0)&\raisebox{-.5\height}{\includegraphics[width=4in]{./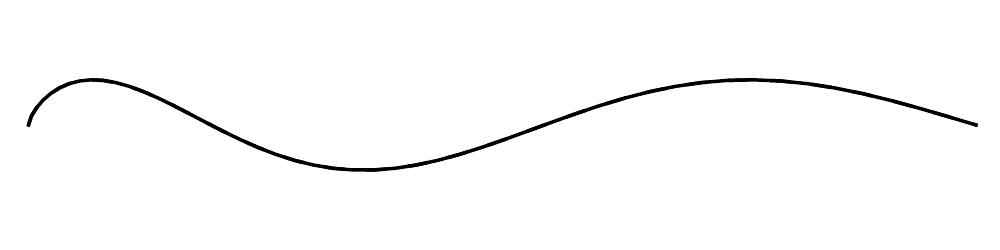}} & \begin{minipage}[t]{2in}Target function\end{minipage}\\
    (1)&\raisebox{-.5\height}{\includegraphics[width=4in]{./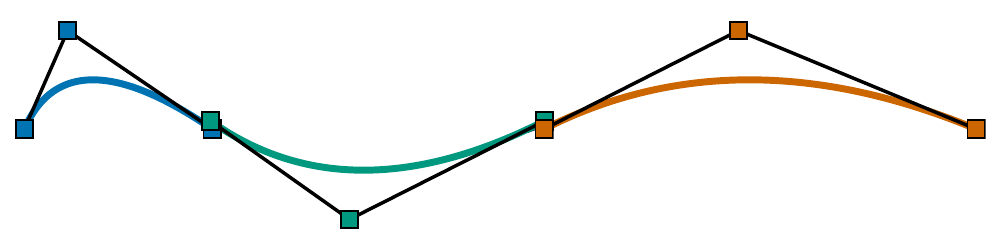}} & \begin{minipage}[t]{2in}Perform local projection to obtain Bézier control points (represented by squares, colored to match elements)\end{minipage}\\
    (2)&\raisebox{-.5\height}{\includegraphics[width=4in]{./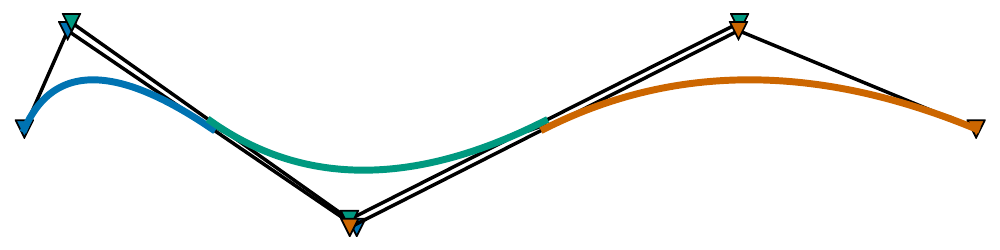}} & \begin{minipage}[t]{2in}Use element reconstruction operator to project \Bezier points to spline control points (represented by inverted triangles, colored to match elements)\end{minipage}\\
    (3)&\raisebox{-.5\height}{\includegraphics[width=4in]{./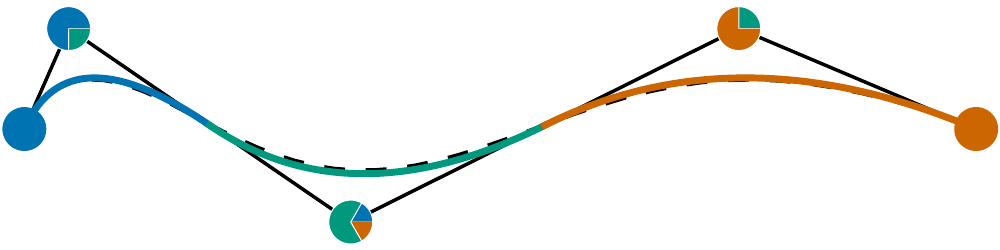}} & \begin{minipage}[t]{2in}Apply smoothing algorithm (contribution of each element to each control point shown by colored fraction)\end{minipage}\\
    (4)&\raisebox{-.5\height}{\includegraphics[width=4in]{./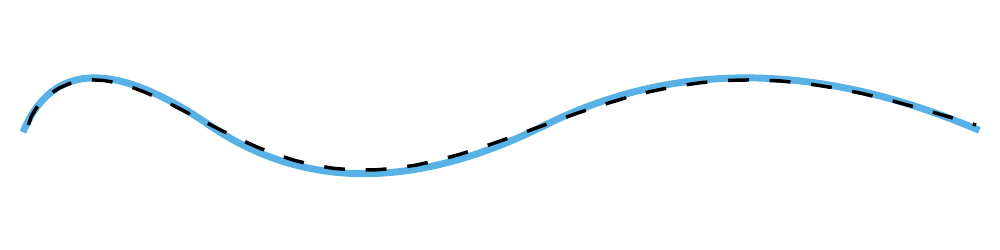}} & \begin{minipage}[t]{2in}Comparison of final function (light blue) and target function (dashed)\end{minipage}
  \end{tabular}
  \caption{\label{fig:loc-proj-example}Steps of \Bezier projection.}
\end{figure}

When projecting onto splines with parametric cells that are
proportionally similar to the physical elements it may not be
necessary for accuracy
to compute the physical integrals appearing in
\cref{eq:weight-def}. Recall that $\int_a^b B_i^p(\xi)
d\xi = (b-a)/(p+1)$. Therefore the weight associated with function
$A$ over element $e$ may be approximated by
\begin{equation}
  \label{eq:weight-ext-form}
  \omega_A^e =
  \frac{\textrm{vol}(\hat{\Omega}^e)\sum\limits_{i=1}^{p+1}c^e_{A,i}}{\sum\limits_{e^{\prime}
      =
      \mathsf{E}_A}\textrm{vol}(\hat{\Omega}^e)\sum\limits_{i=1}^{p+1}c^{e^{\prime}}_{A,i}} 
\end{equation}
where $\textrm{vol}(\hat{\Omega}^e)$ represents the volume of the
parametric domain associated with element $e$.

\subsection{Projection onto a rational basis}
When projecting a function $f$ onto a rational basis it is assumed
that a weight coefficient is given for each basis function. 
These weight coefficients define the weight function given in \cref{eq:weight-func-def}.
Rather than projecting directly onto the rational basis, we choose to
compute a set of homogeneous coefficients $\tilde{\lambda}_A(f)$. 
These coefficients are related to the coefficients of the rational basis functions by
\begin{equation}
\label{eq:homo-coeffs-trans}
\lambda_A(f)=\frac{\tilde{\lambda}_A}{w_{A}}.
\end{equation}
Leveraging homogeneous coefficients allows us to use \Bezier
projection as formulated for projection onto a polynomial spline basis with the exception
that the function that we project is
$w(\xi)f(\phi(\xi))$. The element extraction operator can be
used to convert the spline weight coefficients to \Bezier weight
coefficients. 
The \Bezier weight coefficients can then be used to compute the weight
function over element $e$.  In one dimension, the element weight
function is
\begin{equation}
\label{eq:bez-weight}
w^e(\xi)=\sum_{i=1}^{p+1}w_i^eB_i^p(\xi).
\end{equation}
The homogeneous \Bezier coefficients over the element $e$ are then given by
\begin{equation}
\label{eq:homo-coeff-lin}
\tilde{\vec{\beta}}^e=\mat{G}^{-1}\tilde{\vec{b}}
\end{equation}
where $\mat{G}^{-1}$ is given by \cref{eq:inv-gramian-expr} and the
entries in the vector $\tilde{\vec{b}}$ are given in one dimension by 
\begin{equation}
\label{eq:bern-weight-int-def}
\tilde{b}_i=\int_{-1}^1B_i^p(\xi)w^e(\xi)f(\phi_e(\xi))d\xi.
\end{equation}
The \Bezier projection process then proceeds as outlined from
\cref{eq:elem-lambda-def} onward with all $\beta$ and $\lambda$
variables replaced by their homogeneous counterparts $\tilde{\beta}$
and $\tilde{\lambda}$. 

\subsection{Convergence}\label{sec:convergence}
We illustrate the convergence of the \Bezier projection method with
the weighting defined in \cref{eq:weight-ext-form} by comparing the
error in the global $L^2$ projection to the error in \Bezier projection.
We use a single cycle of of a sinusoid over the unit interval given by $f(x)=\sin(2\pi x)$ to test the convergence.
The $L^2$ error of the projection of the sine function onto a spline
basis using \Bezier projection is compared to the global $L^2$
projection in \cref{fig:convergence} for splines with polynomials
degree ranging from 2 to 5. 
It can be seen that the \Bezier projection preserves the optimal
convergence rates of the underlying basis for all degrees. 
Furthermore, the results of \Bezier projection converge rapidly to the global projection results with only weak dependence on the polynomial degree of the basis.
% We hypothesize that the weak dependence on the polynomial degree of the spline space is due to the increased support of the spline basis functions and that it may be possible to obtain improved agreement between the localized and global projections by selecting a weighting scheme that more strongly weights the elements near the center of the function.
% We also note the presence of an increased numerical ``floor'' for high-degree splines.
% This limit can be attributed to two sources.
% Both are related to the conditioning of the basis.

\begin{figure}[htb]
  \centering
  \includegraphics[width=5in]{./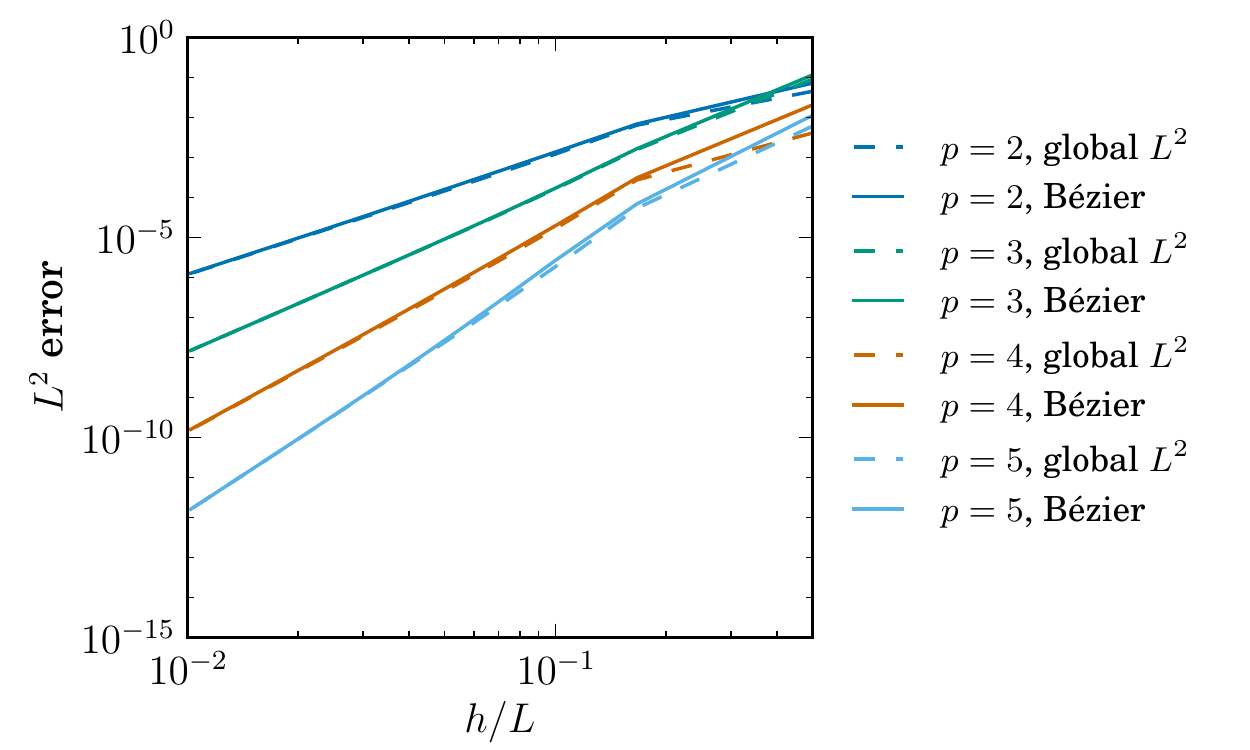}
  \caption{\label{fig:convergence}Convergence of the \Bezier projection method for the projection of a sine function onto a uniform B-spline basis.}
\end{figure}

We now consider the benchmark problem proposed by \citet{govindjee2012} in which the function
\begin{equation}
\label{eq:gov-func}
f(x,y)=\sin \left( \frac{3\pi x}{\sqrt{2}R}\right)\sin \left(\frac{ 2\pi y}{L} \right)
\end{equation}
is projected onto the rational spline basis that defines a quarter cylinder of length $L$ and radius $R$ that is positioned so that one flat edge of the shell lies on the $y$ axis with the lower corner at the origin and the other lies in the $z=0$ plane along the line $x=\sqrt{2}R$.
The geometry is illustrated in part (a) of \cref{fig:govindjee-surf} and the evaluation of the function $f$ on the surface is shown in part (b) of the same figure.

The convergence of the \Bezier projection is compared to global $L^2$ projection in \cref{fig:govindjee-benchmark}.
It can be seen that the \Bezier projection converges optimally.
There is an apparent floor for the convergence of the $p=5$ case that can be attributed to the conditioning of the Bernstein basis and the element extraction operators.
Addressing this issue is beyond the scope of this paper but will be treated in a future paper.
\begin{figure}[htb]
  \centering
  \begin{tabular}{ *{2}{c}}
    \includegraphics[width=2in]{./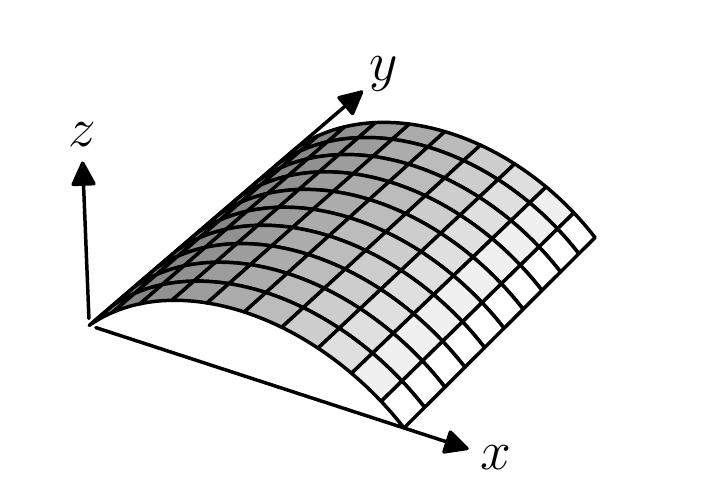}&\includegraphics[width=2in]{./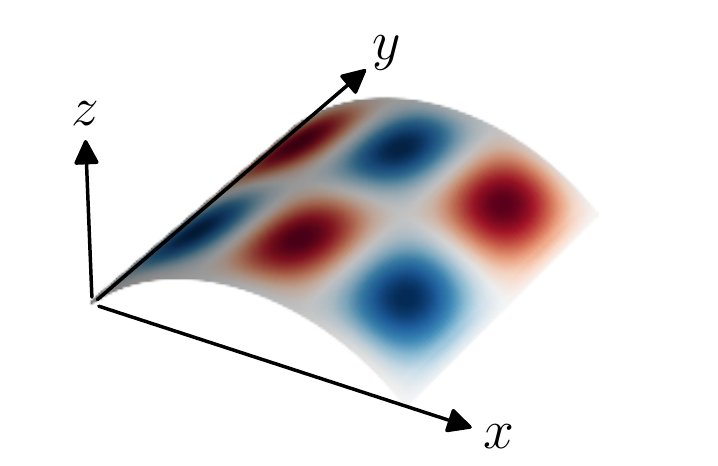}\\
    (a)&(b)
  \end{tabular}
  \caption{\label{fig:govindjee-surf} Geometry for the Govindjee
    benchmark problem.}
\end{figure}

\begin{figure}[htb]
  \centering
  \includegraphics[width=5in]{./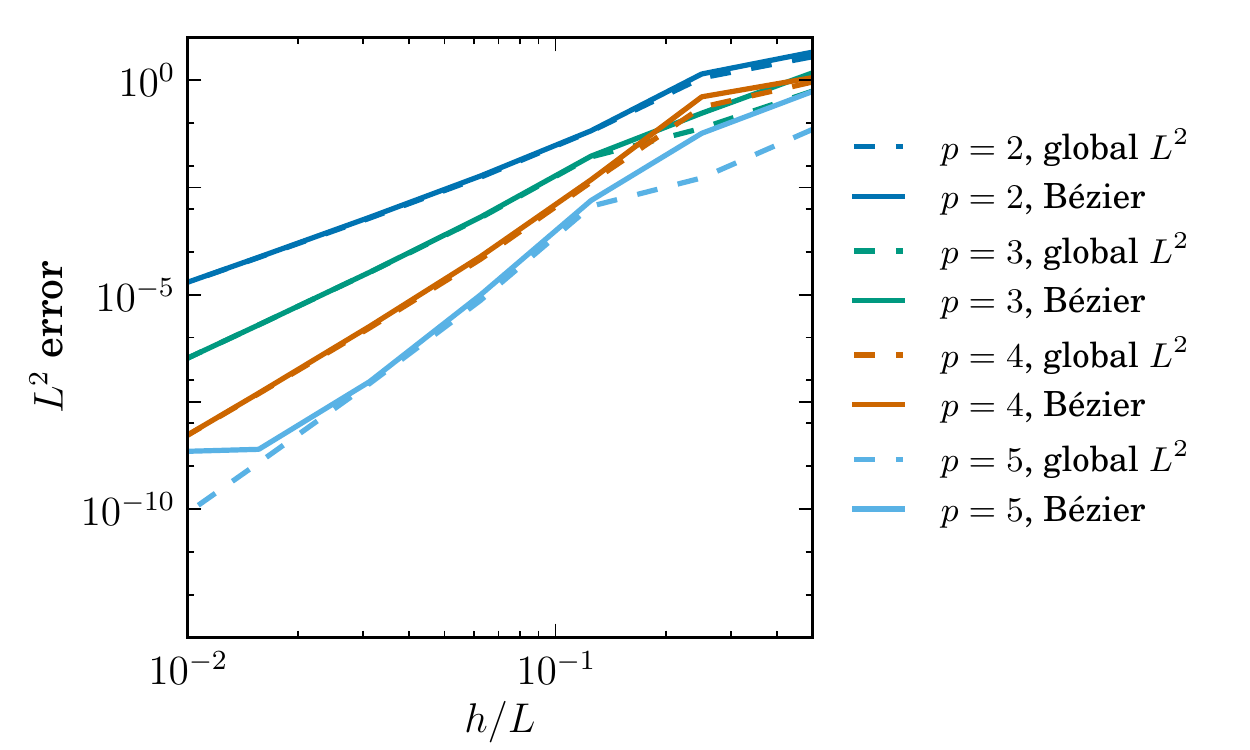}
  \caption{\label{fig:govindjee-benchmark}Convergence plots for the Govindjee benchmark problem.}
\end{figure}

The optimal convergence rates observed in \cref{fig:govindjee-benchmark,fig:convergence} support the following theorem.
\begin{theorem}
  The \Bezier projector $\Pi_B:\fspace{F}\rightarrow\fspace{T}$ exhibits optimal convergence rates.
\end{theorem}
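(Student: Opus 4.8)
The plan is to prove a \emph{local} interpolation estimate on each \Bezier element $\hat{\Omega}^e$ and then sum over the mesh, following the Bramble--Hilbert / Dupont--Scott paradigm for quasi-interpolants. Write $h_e=\mathrm{diam}(\hat{\Omega}^e)$, $h=\max_e h_e$, and let $\tilde{\Omega}^e$ denote the support extension of $\hat{\Omega}^e$, i.e.\ the union of the supports of the spline basis functions that are active on $\hat{\Omega}^e$. Two facts carry the argument: (i) \emph{local polynomial reproduction}, $\Pi_B P = P$ for every polynomial $P$ of coordinate degree $\vec{p}$; and (ii) \emph{local $L^2$-stability}, $\|\Pi_B g\|_{L^2(\hat{\Omega}^e)} \le C_{\mathrm{st}} \|g\|_{L^2(\tilde{\Omega}^e)}$ with $C_{\mathrm{st}}$ independent of the mesh. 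Granting (i)--(ii), for any polynomial $P$ of degree $\vec{p}$ one has $f-\Pi_B f = (f-P)-\Pi_B(f-P)$ on $\hat{\Omega}^e$, so $\|f-\Pi_B f\|_{L^2(\hat{\Omega}^e)} \le (1+C_{\mathrm{st}})\|f-P\|_{L^2(\tilde{\Omega}^e)}$; minimizing over $P$ and invoking the Bramble--Hilbert lemma on the patch $\tilde{\Omega}^e$ yields $\|f-\Pi_B f\|_{L^2(\hat{\Omega}^e)} \le C\,h_e^{p+1}\,|f|_{H^{p+1}(\tilde{\Omega}^e)}$ for $f\in H^{p+1}$.

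To establish (i): the Bernstein space $\fspace{B}^{\vec{p}}$ contains all polynomials of degree $\vec{p}$ (\cref{lemma:bern-lin-indep}), so the element $L^2$ projection in \cref{eq:bernstein-proj} is exact on such polynomials; the reconstruction step \cref{eq:elem-lambda-def} is an invertible change of basis (\cref{lemma:elem-ext-inv}) and is therefore also exact; the spline space $\fspace{T}$ itself contains the global polynomials of degree $\vec{p}$ (\cref{thm:ast-complete}, and classically for open-knot B-splines and NURBS), so by \cref{thm:same-coeffs} every element in $\mathsf{E}_A$ returns the \emph{same} true control value $\lambda_A(P)$; and since the weights sum to one (\cref{thm:weight-sum}), the weighted average \cref{eq:bez-proj-global-coeffs} returns exactly $\lambda_A(P)$. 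As $(\Pi_B f)|_{\hat{\Omega}^e}$ depends on $f$ only through $f|_{\tilde{\Omega}^e}$, this is precisely the reproduction property used above.

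Establishing (ii) is the technical heart of the proof, and I would obtain it by tracking the stability constant through the three steps of \cref{alg:local-proj}. On the biunit interval $\mat{G}^{-1}$ is a \emph{fixed} matrix available in closed form from \cref{eq:inv-gramian-expr}, so after accounting for the Jacobian of $\phi_{e}$ one gets $\|\vec{\beta}^{e}(g)\| \lesssim \mathrm{vol}(\hat{\Omega}^{e})^{-1/2}\|g\|_{L^2(\hat{\Omega}^{e})}$; the reconstruction step multiplies by $\|(\mat{R}^{e})^{\trans}\| = \|(\mat{C}^{e})^{-1}\|$; the weights lie in $[0,1]$ and both $|\mathsf{E}_A|$ and the number of functions active on an element are bounded in terms of $\vec{p}$; and re-expressing the result as the $L^2(\hat{\Omega}^e)$ norm of $\sum_A \lambda_A N_A$ reintroduces a factor $\mathrm{vol}(\hat{\Omega}^e)^{1/2}$ times the $L^2$-stability constant of the rescaled local spline basis, cancelling the earlier $\mathrm{vol}^{-1/2}$. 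The genuinely nontrivial point --- and the main obstacle --- is the uniform bound $\sup_e \|(\mat{C}^e)^{-1}\| \le C$ together with uniform $L^2$-stability of the spline basis: for B-splines and NURBS this follows from affine equivalence (finitely many element shapes up to scaling) and de~Boor-type stability bounds, while for analysis-suitable T-splines it requires a shape-regularity hypothesis on the T-mesh (uniformly bounded ratios of neighbouring knot spacings), after which it can be extracted either from a compactness argument over the finitely many combinatorial neighbourhoods or from explicit knot-vector formulas for $\mat{C}^e$.

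Finally I would assemble the global estimate by squaring the local bounds, summing over $e$, and using the finite-overlap property of the patches $\{\tilde{\Omega}^e\}$ (again a consequence of compact support and shape regularity) to obtain $\|f-\Pi_B f\|_{L^2(\Omega)} \le C\,h^{p+1}\,|f|_{H^{p+1}(\Omega)}$, the optimal rate $p+1$ in $L^2$; the corresponding estimates in $H^s$ for $0\le s\le 1$ then follow from an inverse inequality on the \Bezier elements. When $\Omega\neq\hat{\Omega}$ the geometric map $\psi$ is absorbed by assuming it is regular with derivatives bounded above and below; the rational case reduces to the polynomial one, since \Bezier projection is carried out on the homogeneous coefficients and the weight function is bounded away from zero. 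The apparent accuracy floor for $p=5$ in \cref{fig:govindjee-benchmark} reflects finite-precision conditioning of the Bernstein basis and the extraction operators rather than the asymptotic rate established here.
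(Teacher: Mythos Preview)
Your proposal is correct and follows essentially the same route as the paper's proof in \cref{sec:proof-cont-proj}: spline (polynomial) reproduction plus local $L^2$-stability of $\Pi_B$---tracked through the Bernstein projection, the reconstruction operator, and the weighted averaging---combined with a local approximation result (the paper cites a spline approximation lemma rather than Bramble--Hilbert, but the effect is identical) and an inverse inequality for the $H^k$ estimates. One small correction: the uniform bound on $\|(\mat{C}^e)^{-1}\|$ for B-splines does \emph{not} follow from ``finitely many element shapes up to scaling,'' since the extraction operators depend continuously on the ratios of neighbouring knot spacings; the paper obtains it instead from the observation that the extraction operators are invariant under a constant rescaling of the parametric domain, together with the same shape-regularity hypothesis (bounded knot-spacing ratios) that you correctly invoke for T-splines---and which is equally needed in the B-spline case.
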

\begin{proof}
\noindent See \cref{sec:proof-cont-proj}.
\end{proof}
\subsection{Applications and examples}
\subsubsection{Lifting of a surface normal field to spline control vectors}
For many operations involving surfaces it is advantageous to have an
accurate but approximate spline representation of the normal field.
The normal field $\vec{n}$ can be approximated by finding a set of ``control vectors'' that define a vector field over the surface:
\begin{equation}
\label{eq:spline-vec}
\hat{\vec{n}} = \sum_A\vec{V}_AN_A.
\end{equation}
The control vectors can be thought of as vectors anchored to the control points that define the surface.
We refer to the process of calculating control vectors from a vector
field defined over a surface as ``lifting'' the field off the surface onto the control points.
The control vectors which approximate a normal field can be easily
computed with \Bezier projection.
\begin{figure}
  \centering
  \includegraphics[width=4in]{./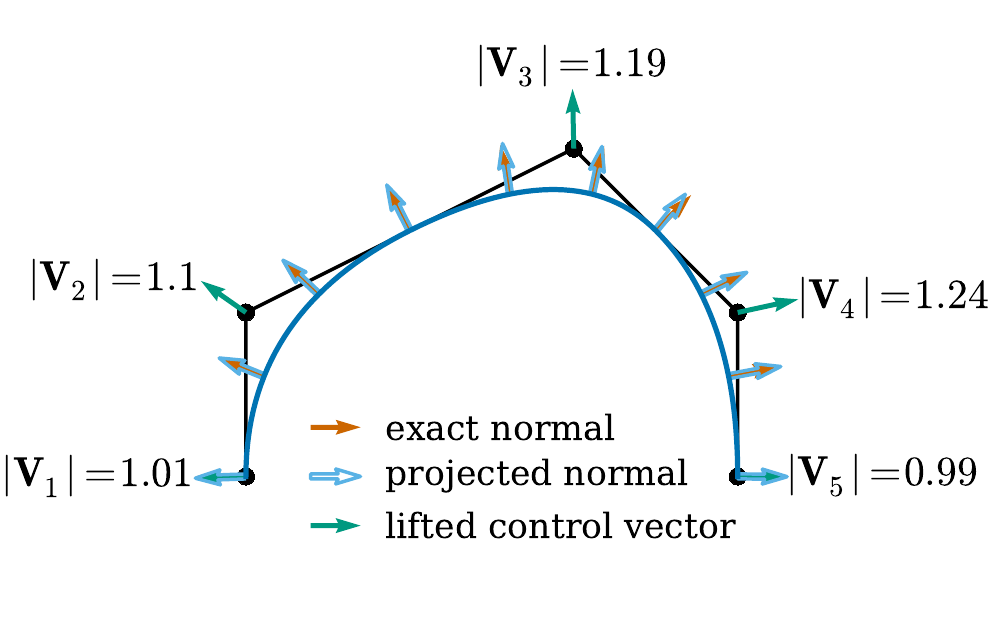}
  \caption{Lifting of normals} 
  \label{fig:normal-project}
\end{figure}
The process is illustrated in \cref{fig:normal-project}.
The normal field $\vec{n}$ for the curve is depicted with orange arrows.
The control vectors $\vec{V}_A$ obtained from \Bezier projection of the normal field onto the spline basis are shown in green on the associated control points.
The magnitute of the control vectors is also shown.
It is interesting to note that the control vectors for the approximate normal field have magnitude greater than 1.
The approximate projected normal field $\hat{\vec{n}}$ given by \cref{eq:spline-vec} is represented by the blue empty arrows.
The approximate normal field represents the normal field well except in regions of increased curvature.
Because the \Bezier projection method enjoys optimal convergence rates,
the accuracy can be improved by increasing the polynomial degree of
the spline basis used to represent the normal field or by subdividing
knot intervals.

A more complex T-spline normal lifting example is shown
in~\cref{fig:ts-smooth-hull}. A smooth containership hull is modeled
using bicubic T-splines. The 
Autodesk T-spline plugin for Rhino is used to model the surface \cite{TSManual12} and
the \Bezier extraction of the surface is then automatically exported for further
processing. Note that once the \Bezier extraction is computed, no further
information from the original CAD model is required. The T-splines
in~\cref{fig:ts-smooth-hull}a and \cref{fig:ts-smooth-hull}c are
composed of 36 \Bezier elements and 75 control 
points. The globally-refined T-splines in \cref{fig:ts-smooth-hull}b
and \cref{fig:ts-smooth-hull}d are
composed of 156 \Bezier elements and 221
control points. In~\cref{fig:ts-smooth-hull}a the magnitude of
the error in the projected normal field is shown. Notice the expected
concentration of error in regions of high
curvature. To improve the accuracy of the projected normal
field the coarse containership hull is globally refined to produce the
T-spline in~\cref{fig:ts-smooth-hull}b. Notice the
dramatic improvement in the accuracy of the 
projected normal field. \cref{fig:ts-smooth-hull}c and \cref{fig:ts-smooth-hull}d compare the
exact normals (blue arrows) to the projected normals (red arrows) at the corners of each
\Bezier element for the coarse and fine T-spline.

The projected normal field can be used to automatically generate a 
thickened shell geometry as shown in~\cref{fig:ts-smooth-hull-thick}. We
feel that this approach has the potential to provide a rigorous
geometric foundation for structural mechanics
applications based on plate and shell models.

\begin{figure}[htb]
  \centering
  \begin{tabular}{ *{1}{c} }
    \includegraphics[width=4in]{./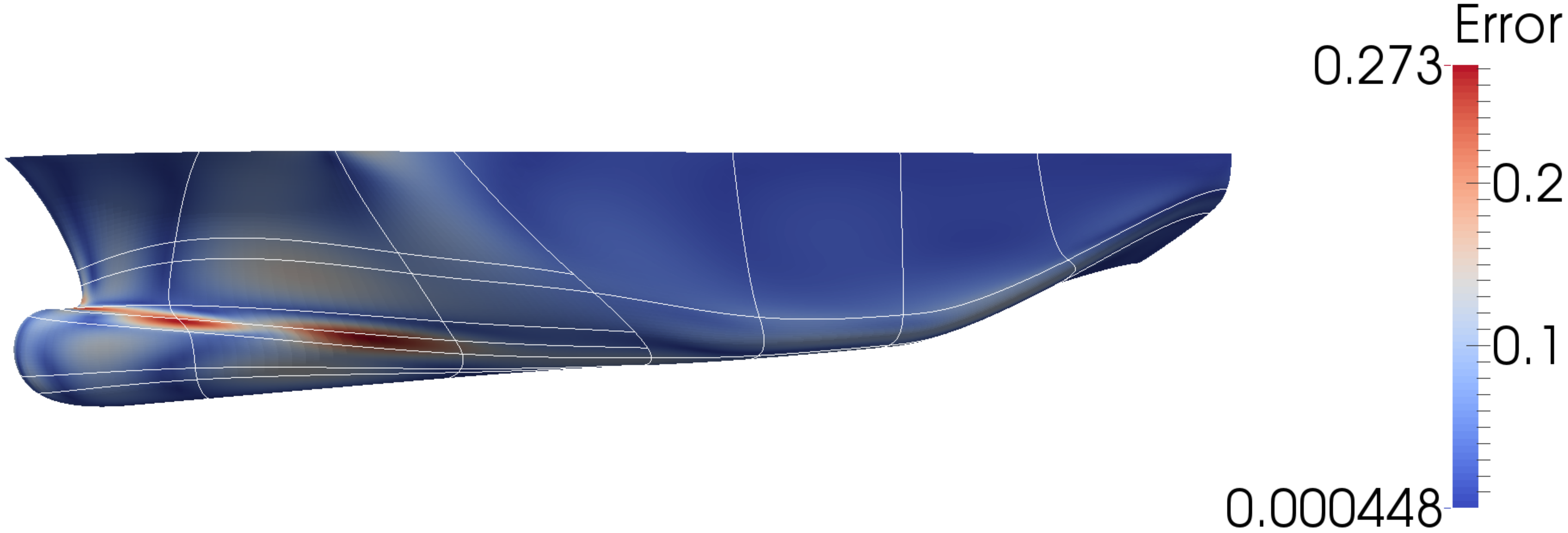}\\ \vspace{10pt}
    (a) Magnitude of error in projected normals for coarse T-mesh \\\vspace{10pt}
    \includegraphics[width=4in]{./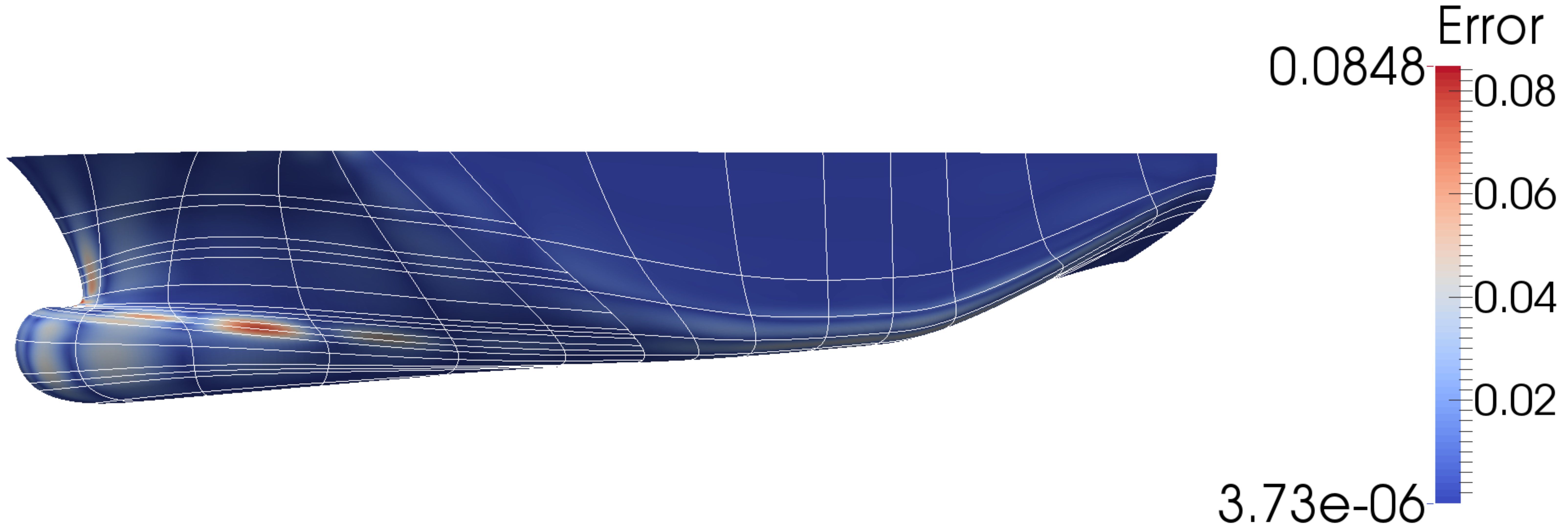}\\\vspace{10pt}
    (b) Magnitude of error in projected normals for fine T-mesh \\\vspace{10pt}
  \includegraphics[width=4in]{./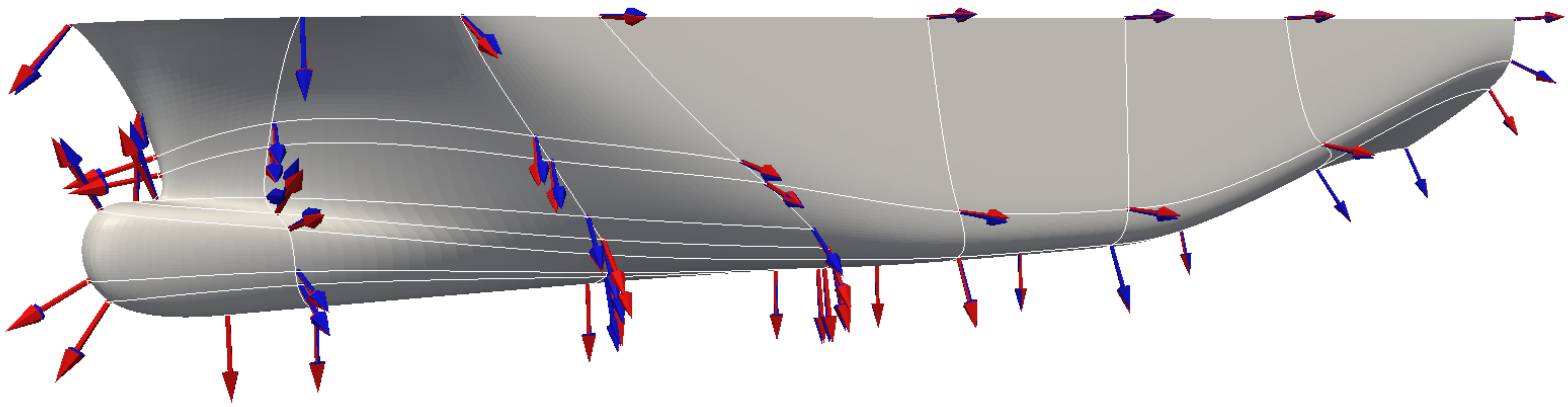}\\\vspace{10pt}
    (c) Exact normals (blue arrows) compared to projected normals (red
    arrows) for coarse T-mesh \\\vspace{10pt}
    \includegraphics[width=4in]{./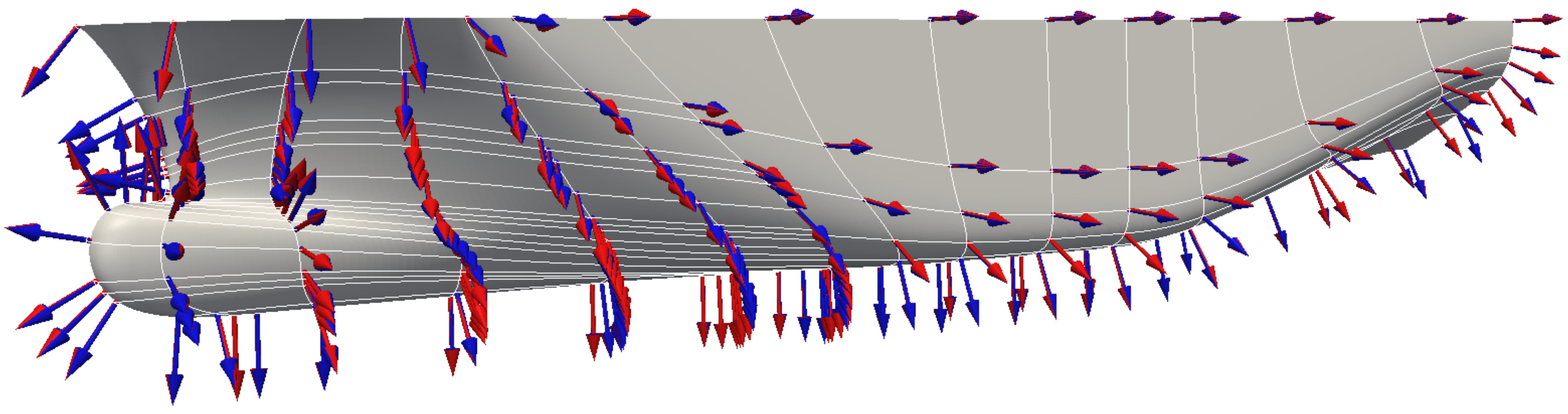}\\\vspace{10pt}
    (d) Exact normals (blue arrows) compared to projected normals (red
    arrows) for fine T-mesh.
  \end{tabular}
  \caption{A bicubic T-spline containership hull. The magnitude of the error
    in the projected normal field for a coarse (a) and fine (b) T-mesh. The exact normals (blue
    arrows) are compared to the projected normals (red arrows) at the
    corners of each \Bezier element for a coarse (c) and fine (d)
    T-mesh.\label{fig:ts-smooth-hull}}
\end{figure}
\clearpage

\begin{figure}[htb]
  \centering
  \begin{tabular}{ *{1}{c} }
    \includegraphics[width=4in]{./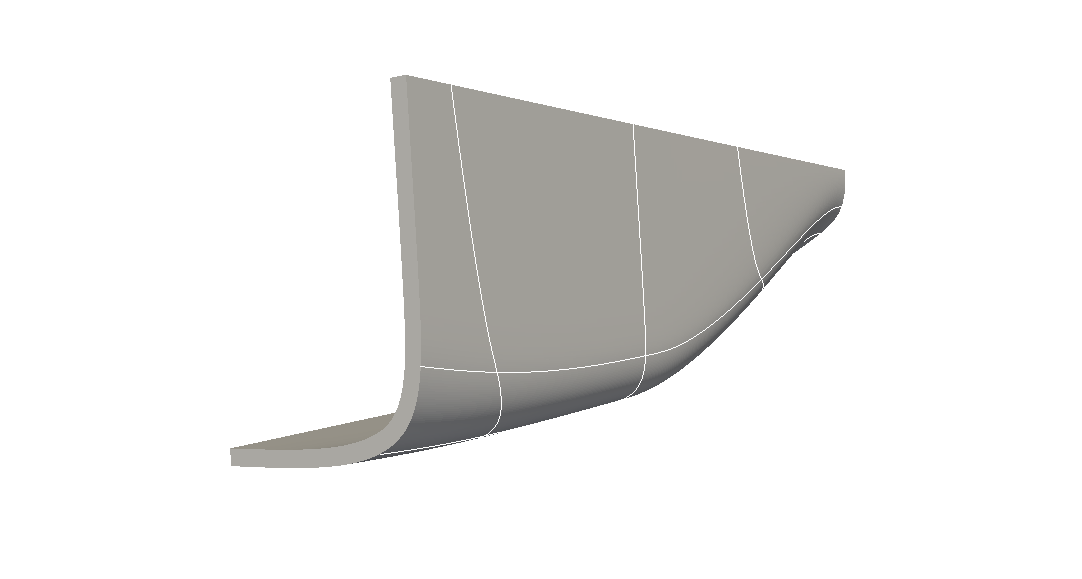}
  \end{tabular}
  \caption{A thickened bicubic T-spline containership hull. This
    T-spline surface in \cref{fig:ts-smooth-hull} has automatically
    been thickened using \Bezier projection.
\label{fig:ts-smooth-hull-thick}}
\end{figure}

\subsubsection{Projection between advected meshes}
Another application that benefits significantly from \Bezier
projection are isogeometric methods that rely on moving or advected meshes.
For example, in problems involving flow over moving boundaries or
large deformations it is often necessary to move the mesh with the
material and/or remeshing of the computational domain.
This is a fundamental component of arbitrary Lagrangian-Eulerian (ALE)
type methods~\cite{DGH82,JohTez94}.

As a simple example of the behavior of \Bezier projection for these
problems, we consider a benchmark problem of pure advection, namely the advection of a sine bump temperature feature by a rotating flow.
The problem statement is given in \cref{fig:rot-cone-setup}.
Rather than solve the advection problem directly, we employ a moving mesh approach.
The mesh and field defined over it are advected with the flow and then periodically projected back onto the the original spatial grid.
This procedure is illustrated in \cref{fig:spinning-mesh}.
The rotation operator $\vec{\rho}$ is used to rotate the mesh and field and then a \Bezier projection $\Pi_B$ is used to project the field from the rotated mesh onto a spatially aligned grid.
\Bezier projection is especially advantageous here because only the elements in the rotated mesh that overlap a given element in the spatial mesh are required to perform the integration on each element.
After the local integrations have been completed, the result is smoothed using \cref{eq:weight-def}.
The process is then repeated until the simulation is complete.
The spinning mesh was carried out on a 30 by 30 mesh on a square domain $[-1.5,1.5]\times[-1.5,1.5]$ using biquadratic B-splines.
The mesh was rotated by $\theta=\pi/10$ at each time step so that 20 steps were required to complete a full rotation.
A total of 8 rotations of the cone about the origin were computed using 160 steps.
The first and last steps of the simulation are shown in \cref{fig:spinning-mesh-start-finish}.
The 160 steps required to advect the sine bump through 8 rotations have resulted in an increase in over- and undershoot of approximately one percent.
A slice through the solution along the $x$ axis is shown in \cref{fig:spinning-mesh-slice}.
The repeated projections between non-aligned meshes have not produced any observable changes between the initial and final states beyond the slight increase in over- and undershoot.
\begin{figure}
  \centering
  \includegraphics[width=2in]{./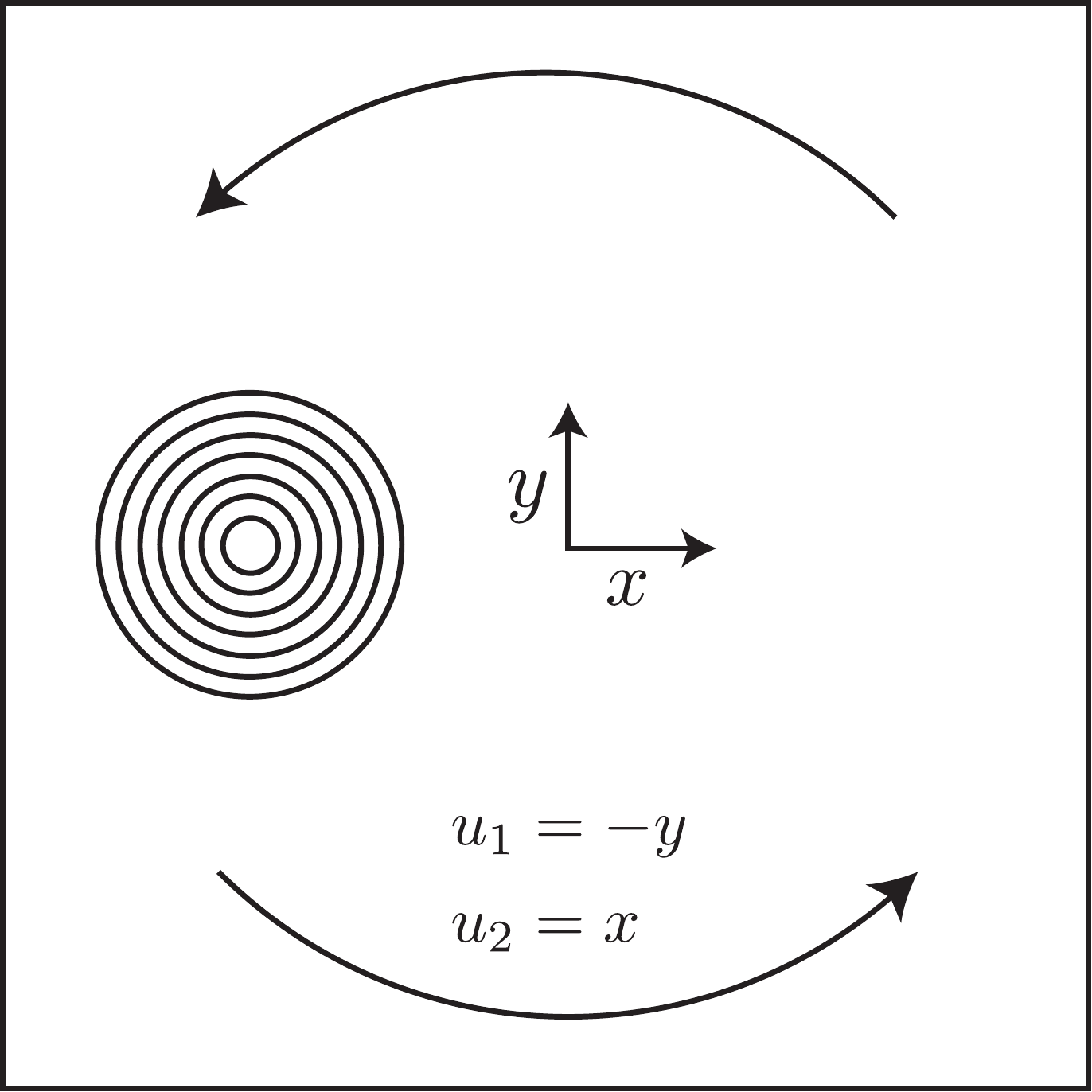}
  \caption{The rotating cone in a square problem statement.} 
  \label{fig:rot-cone-setup}
\end{figure}
\begin{figure}
  \centering
  \includegraphics[width=2in]{./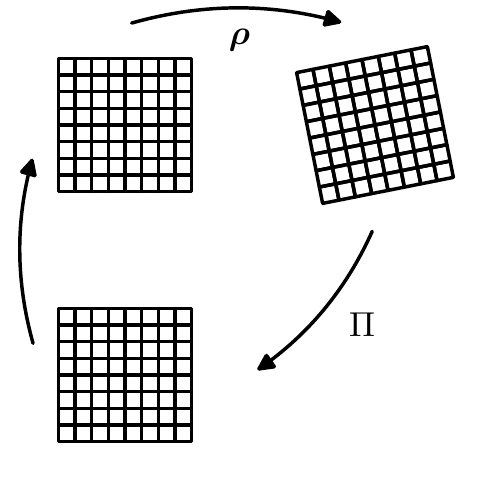}
  \caption{Spinning mesh procedure used for the rotating cone in a square problem.} 
  \label{fig:spinning-mesh}
\end{figure}
\begin{figure}[htb]
  \centering
  \begin{tabular}{*{1}{c}}
    \includegraphics[width=3in]{./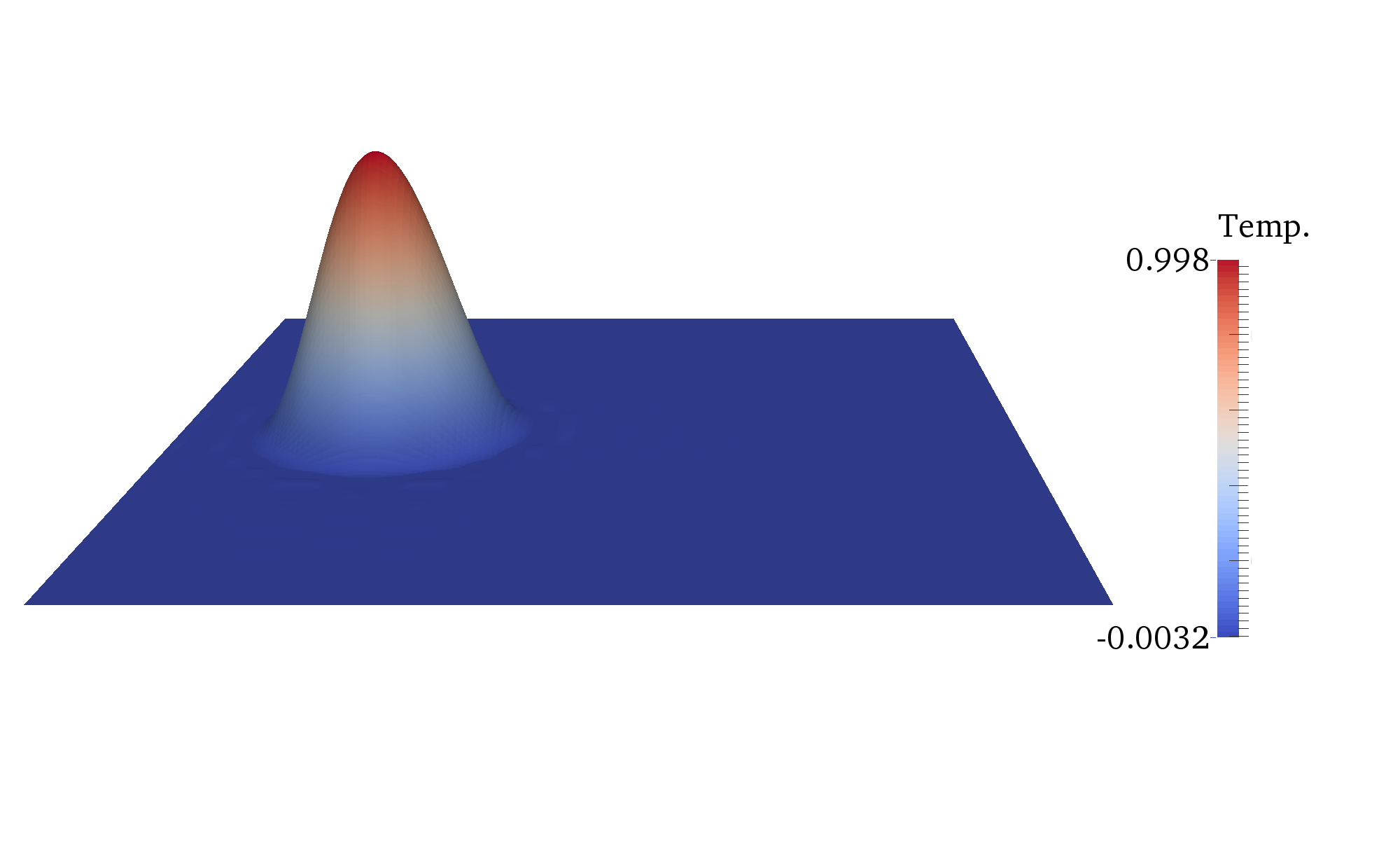}\\
    \includegraphics[width=3in]{./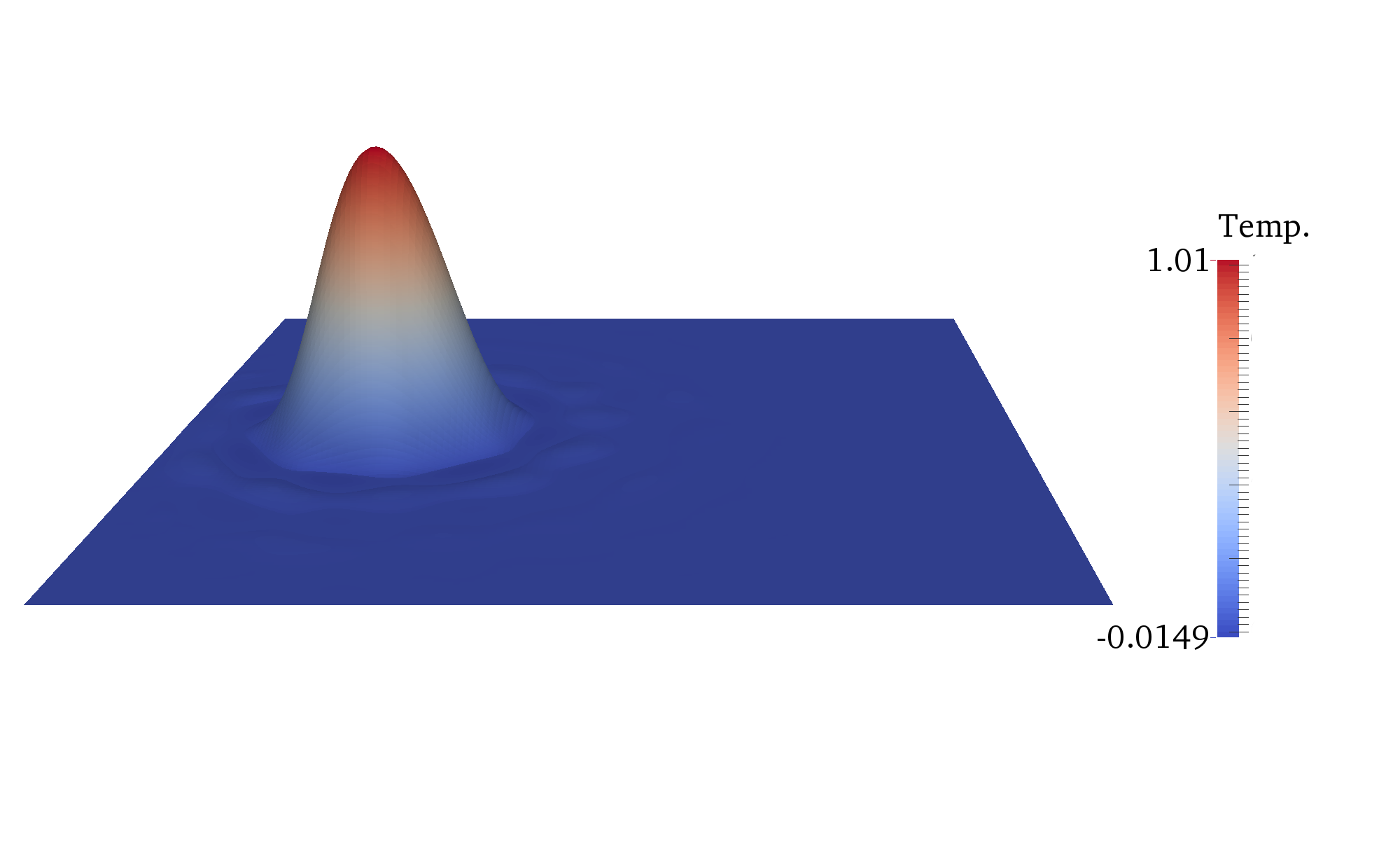}
  \end{tabular}
  \caption{\label{fig:spinning-mesh-start-finish}Initial (top) and
    final (bottom) steps in the solution of the pure advection of a sine hill by means of an advected mesh.}
\end{figure}
\begin{figure}[htb]
  \centering
    \includegraphics[width=4in]{./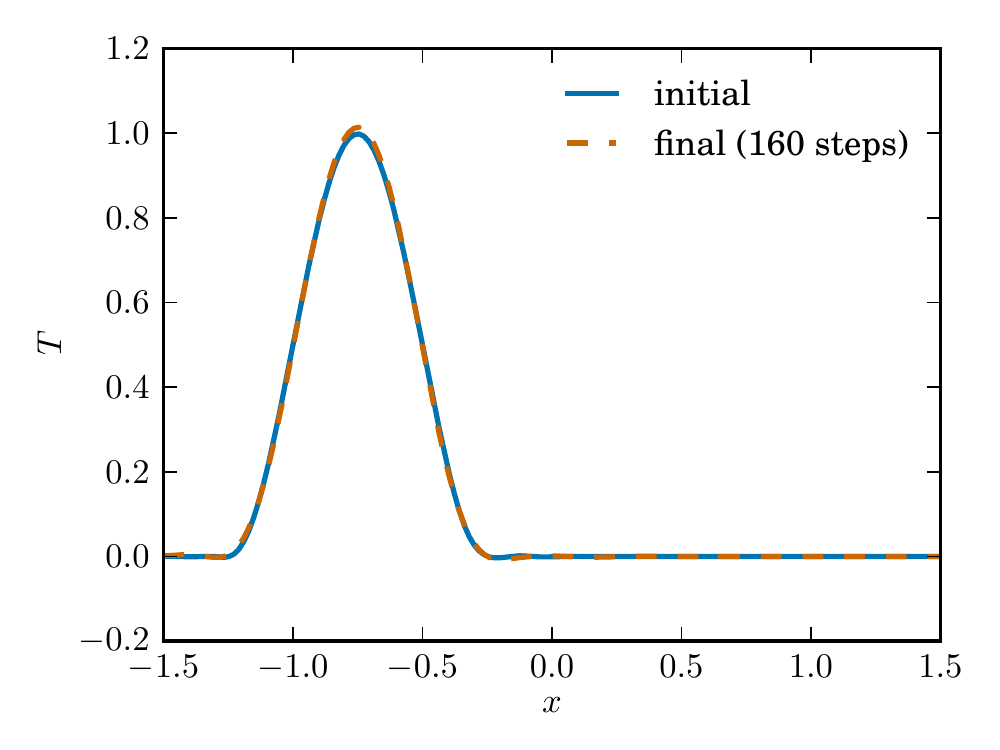}
  \caption{\label{fig:spinning-mesh-slice}Slice of the final result along the line $y=0$ for the rotating cone advection problem.}
\end{figure}

\section{Projection between spline spaces and operations on splines}\label{sec:proj-betw-spline}
We now consider \Bezier projection between splines spaces. We refer to
the spaces $\fspace{T}^a$ and $\fspace{T}^b$ as the source space and
target space, respectively.
While it is possible to use \Bezier projection as defined in
\cref{sec:localized-projection} to project between spline spaces, we
will show that it is possible to
define a \textit{quadrature-free} \Bezier projection approach for projection between spline spaces.
This approach can then be used to obtain algorithms for knot insertion
and removal, degree elevation, and reparameterization that can be
applied to any spline that can be represented using \Bezier
extraction (B-splines, NURBS, T-splines, LR-splines, etc.).

In all cases, the algorithms construct a new spline space and then project the original
spline representation onto the new space.  
All of these operations except reparameterization consist of either
projection from one space onto a superspace, which we refer to as
refinement, or projection from a space onto a subspace, which we refer
to as coarsening. It is also possible to define non-nested refinement operations that
increase the number of basis functions or degrees of freedom without
projecting onto a superspace although this possibility is not
considered in great depth here beyond reparameterization. 

One of the intriguing new features of the isogeometric paradigm is the
potential for $k$-refinement \cite{HuCoBa04,CoReBaHu05}, or in
other words, adaptively modifying the local smoothness of the
basis to improve the accuracy of the solution. 
\citet{HuCoBa04} and \citet{CoReBaHu05} originally used the term
$k$-refinement to refer to the process of generating a sequence of
smoother and smoother bases. 
We prefer to use $k$-refinement to denote the process of basis
roughening, or reducing the smoothness of a basis through knot
insertion, and $k$-coarsening to indicate the smoothing of the basis
functions through knot removal. The reason for this convention is twofold.
First, we prefer to use the word refinement to indicate the
transformation of the solution into a space that contains the
unrefined solution. This is not the case for a smoothed basis.
A function represented in terms of a spline basis that is $C^1$ at
each knot cannot be represented by a basis that has
higher continuity at the knots. 
Second, the word refinement suggests increased resolution or the
capability to represent finer detail or additional features. 
The process of basis smoothing reduces the dimension of the
space and so we refer to basis smoothing as $k$-coarsening. 
The space of spline functions defined by basis roughening contains the
original space and provides additional degrees of freedom and so we
feel that it is most natural to associate basis roughening with
$k$-refinement.

To simplify later developments we adopt the following naming convention:
\begin{enumerate}
\item (Cell) Subdivision is $h$-refinement.
\item (Cell) Merging is $h$-coarsening.
\item (Degree) Elevation is $p$-refinement.
\item (Degree) Reduction is $p$-coarsening.
\item (Basis) Roughening is $k$-refinement.
\item (Basis) Smoothing is $k$-coarsening.
\item Reparameterization is $r$-adaptivity.
\end{enumerate}

\Bezier projection between spline spaces reduces to a highly localized
projection between two different Bernstein bases. It is possible to express this in matrix form as
\begin{equation}\label{eq:generic-spline-spline-proj}
  \vec{P}^{e^{\prime},b} = (\mat{R}^{e^{\prime},b})^{\trans}(\mat{M}^{a,b})^\trans(\mat{C}^{e,a})^\trans\vec{P}^{e,a}
\end{equation}
where the element extraction operator on the source mesh
$\vec{C}^{e,a}$ converts the spline coefficients $\vec{P}^{e,a}$ to
\Bezier form, the matrix $\vec{M}^{a,b}$ converts the \Bezier
coefficients of the source Bernstein basis into coefficients of the
target Bernstein basis, and the element reconstruction operator on the
target mesh is used to convert the new \Bezier coefficients into
spline coefficients for the target basis. The weighted average or smoothing
algorithm given in \cref{eq:bez-proj-global-coeffs} can then be
applied, if necessary, to
obtain a set of global coefficients. 
If the target space is a superspace of the source space,
$\fspace{T}^a\subseteq \fspace{T}^b$, then the smoothing algorithm is
not required and the projection is exact. 
Otherwise, the projection is approximate. The form of $\vec{M}$ will
depend on the particular type of projection being performed (i.e, $h$,
$p$, or $k$).

Leveraging the tensor-product structure of the element extraction and reconstruction operators and the multivariate Bernstein basis,
\cref{eq:generic-spline-spline-proj} can also be written as
\begin{align}
\label{eq:generic-spline-spline-proj-kron}
\vec{P}^{e^{\prime},b} &=
\left\{\left[(\mat{R}_{{d_p}}^{e^{\prime},b})^{\trans}(\mat{M}^{a,b}_{{d_p}})^\trans(\mat{C}_{{d_p}}^{e,a})^\trans\right]\otimes\cdots\otimes\left[(\mat{R}_{1}^{e^{\prime},b})^{\trans}(\mat{M}_{1}^{a,b})^\trans(\mat{C}_{1}^{e,a})^\trans\right]\right\}\vec{P}^{e,a}\nonumber\\  
  &= \left[\bigodot_{i=1}^{d_p}
    (\mat{R}_{i}^{e^{\prime},b})^{\trans}(\mat{M}_{i}^{a,b})^\trans(\mat{C}_{i}^{e,a})^\trans\right]\vec{P}^{e,a} 
\end{align}
where $d_p$ denotes the number of parametric dimensions and the
reversed Kronecker product is denoted by 
\begin{equation}
  \bigodot_{i=1}^N \mat{C}_i = \mat{C}_N\otimes\cdots\otimes\mat{C}_1.
\end{equation}
This follows from standard properties of the Kronecker product.
Thus, most operations can be carried out by multiplying relatively
small matrices for each parametric dimension and then computing the
full Kronecker product of the result. 
This approach has the added benefit that there is no need to store a
large matrix. Instead, the operators for each dimension may be
computed and then used to compute any needed entries in the large
matrix.

\subsection{Projection operations between multiple elements}
As a preliminary tool we consider the projection between multiple
\Bezier elements. We will consider projections to and from a large element $\bar{e}$ and
$n$ subelements $\{e_i\}$.
\subsubsection{Projection from a single element onto multiple subelements}
We first consider the \Bezier projection from a large element $\bar{e}$ onto
$n$ subelements $\{e_i\}$. We require that
$\hat{\Omega}(e_i)\cap \hat{\Omega}(\bar{e}) =
\hat{\Omega}(e_i)$ for all $e_i$. 
\begin{algorithm}\label{alg:large-to-small-proj}
\Bezier projection from a large element $\bar{e}$ to a subelement $e_i$.
  \begin{enumerate}
  \item Convert the spline control values to \Bezier form using the element extraction operator for the element $\bar{e}$
\begin{equation}
\vec{Q}^{\bar{e}} = (\mat{C}^{\bar{e}})^{\trans}\vec{P}^{\bar{e}}.
\end{equation}
  \item Compute the transformation matrix $\mat{A}_i$ between the
    Bernstein basis over the large element and the Bernstein basis
    over the small element using \cref{eq:interval-op} by converting
    the upper and lower bounds of the small element $e_i$ to
    the local coordinates of the large element and using the result
    for $\tilde{a}$ (lower bound) and $\tilde{b}$ (upper bound) in
    \cref{eq:interval-op}. For multivariate elements, the process is
    carried out in each parametric dimension and the matrix $\mat{A}_i$
    is given by the Kronecker product
    \begin{equation}\label{eq:big-A-def}
      \mat{A}_i = \bigodot_{j=1}^{d_p}\mat{A}_{j}.
    \end{equation}
\item Apply the transformation matrix to the \Bezier control values on
  the large element to calculate \Bezier control values on the small
  element 
\begin{equation}
\vec{Q}^{e_i} = \mat{A}_i\vec{Q}^{\bar{e}}.
\end{equation}
\item Use the element reconstruction operator on the small
  element to convert the \Bezier control values to spline control
  values 
\begin{equation}
\vec{P}^{e_i} = (\mat{R}^{e_i})^{\trans}\vec{Q}^{e_i}.
\end{equation}
  \end{enumerate}
The matrix expression for these steps is
\begin{equation}\label{eq:large-to-small}
\vec{P}^{e_i} =
(\mat{R}^{e_i})^{\trans}\mat{A}_i(\mat{C}^{\bar{e}})^{\trans}\vec{P}^{\bar{e}}.
\end{equation}
Because $e_i$ is completely covered by $\bar{e}$ and the basis
functions over each element have the same polynomial degree, a
function expressed in terms of the Bernstein basis over the large
element can be exactly represented by the basis over the small
element. 
\end{algorithm}

\subsubsection{Projection from multiple subelements onto a single element}
We now consider \Bezier projection from a set of $n$ subelements $\{e_i\}$
onto a large element $\bar{e}$. 

\begin{remark} 
In many cases, like $r$-adaptivity, it may be
that $\hat{\Omega}(e_i)\cap \hat{\Omega}(\bar{e}) \neq
\hat{\Omega}(e_i)$ for some $e_i$. In that case,
apply~\cref{alg:large-to-small-proj} first to
trim the element so that $\hat{\Omega}(e_i)\cap \hat{\Omega}(\bar{e}) =
\hat{\Omega}(e_i)$.
\end{remark}

The operation to convert the spline form defined over elements $\{e_i\}$
to Bernstein-\Bezier form on element $\bar{e}$ is 
\begin{equation}
\label{eq:coarsen-spl-local-proj}
\mat{N}^{\trans}\vec{P}=\bar{\vec{B}}^{\trans}\vec{Q}^{\bar{e}}+\epsilon
\end{equation}
where $\vec{N}$ denotes the vector of spline basis functions defined
over the elements $\left\{ e_i \right\}$, $\vec{P}$ represents the
associated control values, $\bar{\vec{B}}$ is the vector of Bernstein
basis functions defined over $\bar{e}$, and $\vec{Q}^{\bar{e}}$ is the
vector of control values that we seek. 
Because the spaces are not nested, there is some error $\epsilon$ associated with the projection.
We can perform an $L^2$ projection of the spline function onto the
Bernstein basis of $\bar{e}$ to obtain the coefficients $\vec{Q}^{\bar{e}}$ by
multiplying both sides by the Bernstein basis $\bar{\vec{B}}$ and
integrating over the domain of $\bar{e}$
\begin{equation}
\label{eq:coarsen-spl-bern-l2-proj}
\int_{\hat{\Omega}(\bar{e})}\bar{\vec{B}}\vec{N}^{\trans}\vec{P}d\hat{\Omega}=\int_{\hat{\Omega}(\bar{e})}\bar{\vec{B}}\bar{\vec{B}}^{\trans}\vec{Q}^{\bar{e}}d\hat{\Omega}. 
\end{equation}
The error in this approximation is orthogonal to the basis $\bar{\vec{B}}$.
Note that this is a matrix equation; each integral is assumed to be carried out over each entry in the matrix.
Now convert the left-hand side from an integral over the domain of
$\bar{e}$ to a sum of integrals over the domains of the elements
$\{e_i\}$ and use the element extraction operators to convert
from spline coefficients to \Bezier coefficients over each element 
\begin{equation}
\label{eq:coarsen-integral-ei}
\sum_{i=1}^n\int_{\hat{\Omega}(e_i)}\bar{\vec{B}}\vec{B}_i^{\trans}\vec{Q}^{e_i}d\hat{\Omega}=\int_{\hat{\Omega}(\bar{e})}\bar{\vec{B}}\bar{\vec{B}}^{\trans}\vec{Q}^{\bar{e}}d\hat{\Omega}.
\end{equation}
We now exploit the relationship between the Bernstein bases given by
\cref{eq:inv-interval-op-basis} to write the Bernstein basis over
$\bar{e}$ in terms of the Bernstein basis over the elements $\{e_i\}$
\begin{equation}
\label{eq:Bbar-A-Bi}
\bar{\vec{B}}=\mat{A}_i^{-\trans}\vec{B}_i.
\end{equation}
This relationship permits \cref{eq:coarsen-integral-ei} to be written as
\begin{equation}
\label{eq:coarsen-integral-Ai}
\sum_{i=1}^n\int_{\hat{\Omega}(e_i)}\mat{A}_i^{-\trans}\vec{B}_i(\vec{B}_i)^{\trans}\vec{Q}^{e_i}d\hat{\Omega}=\int_{\hat{\Omega}(\bar{e})}\bar{\vec{B}}\bar{\vec{B}}^{\trans}\vec{Q}^{\bar{e}}d\hat{\Omega}.
\end{equation}
The integrals generate the Gramian or inner product matrices for the basis functions $\bar{\vec{B}}$ and $\vec{B}_i$.
The Gramian matrix for a Bernstein basis defined over the biunit box of dimension $d_p$ is given by
\begin{equation}
\label{eq:gram-def}
\mat{G}=\int_{[-1,1]^{d_p}}\vec{B}(\vec{\xi})\left[ \vec{B}(\vec{\xi}) \right]^{\trans}d\hat{\Omega}.
\end{equation}
The Gramian matrix for a Bernstein basis defined by a tensor product
over any other box of the same dimension can be related by a constant
scaling related to the volumes of the two boxes. As a result, the
Gramian matrix for the Bernstein bases $\bar{\vec{B}}$ and $\vec{B}_i$
are given by 
\begin{align}
\label{eq:B-bar-scale}
\bar{\mat{G}}&=\frac{\mathrm{vol}\,\hat{\Omega}(\bar{e})}{2^{d_p}}\mat{G}\\
\label{eq:Bi-scale}
\mat{G}_i&=\frac{\mathrm{vol}\,\hat{\Omega}(e_i)}{2^{d_p}}\mat{G}.
\end{align}
An expression for the Gramian matrix is given in \cref{eq:gramian-entries}.
With these relationships
\cref{eq:coarsen-integral-ei} can be rewritten without quadrature as
\begin{equation}
\label{eq:quad-free-coarsen}
\sum_{i=1}^n\vec{A}_i^{-\trans}\vec{G}_i\vec{Q}^{e_i}=\bar{\vec{G}}\vec{Q}^{\bar{e}}
\end{equation}
and $\vec{Q}^{\bar{e}}$ is given in terms of the Gramian $\mat{G}$ of the Bernstein basis over the biunit interval by
\begin{equation}
\label{eq:quad-free-coarsen-G}
\vec{Q}^{\bar{e}}=\sum_{i=1}^n\phi_i\vec{G}^{-1}\vec{A}_i^{-\trans}\vec{G}\vec{Q}^{e_i}
\end{equation}
where $\phi_i=\textrm{vol}\,\hat{\Omega}(e_i)/\textrm{vol}\,\hat{\Omega}(\bar{e})$.

\begin{algorithm}\label{alg:multi-to-one-proj}
Projection of control values from $n$ elements $\{e_i\}$ onto control values for a single element $\bar{e}$.
\begin{enumerate}
\item Use the element extraction operators for the elements $\{e_i\}$ to
  convert the control values on each element to \Bezier form 
  \begin{equation}
    \vec{Q}^{e_i} = (\mat{C}^{e_i})^{\trans}\vec{P}^{e_i}.
  \end{equation}
  \item The vector of \Bezier control values on $\bar{e}$ is given by \cref{eq:quad-free-coarsen-G}
\begin{equation}
\vec{Q}^{\bar{e}}=\sum_{i=1}^n\phi_i\vec{G}^{-1}\vec{A}_i^{-\trans}\vec{G}\vec{Q}^{e_i}.
\end{equation}
\item Use the element reconstruction operator for $\bar{e}$ to
  convert the \Bezier control values to spline control values 
\begin{equation}
\vec{P}^{\bar{e}} = (\mat{R}^{\bar{e}})^{\trans}\vec{Q}^{\bar{e}}. 
\end{equation}
  \item Use the weights defined in \cref{eq:weight-def} to compute the
    new global control values for the $A$th function on the target
    mesh from the new local control values
\begin{equation}
  \vec{P}_A = \sum_{\bar{e}\in \mathsf{E}_A} \omega_A^{\bar{e}} \vec{P}_A^{\bar{e}}.
\end{equation}
\end{enumerate}
\end{algorithm}
In many cases, the element extraction operators, the Bernstein
transformation matrices $\mat{A}_i$, and the Gramian matrix $\mat{G}$
are all formed from Kronecker products, and so it is possible to rewrite steps 1-3 of \cref{alg:multi-to-one-proj} as a more efficient Kronecker product matrix expression
\begin{align}
  \vec{P}^{\bar{e}} &= (\mat{R}^{\bar{e}})^{\trans}\left[\sum_{i=1}^n \phi_i\mat{G}^{-1}\mat{A}_i^{-\trans}\mat{G}(\mat{C}^{e_i})^\trans\vec{P}^{e_i}\right]\label{eq:small-to-large}\\
  &=\sum_{i=1}^n\phi_i\left[\bigodot_{j = 1}^{d_p}(\mat{R}_{s_j}^{\bar{e}})^{\trans}\mat{G}^{-1}_{s_j}\mat{A}_{i,s_j}^{-\trans}\mat{G}_{s_j}(\mat{C}^{e_i}_{s_j})^\trans\right]\vec{P}^{e_i}.
\end{align}
In two dimensions,
\begin{align}
  \left[\sum_{i=1}^n\phi_i\bigodot_{j = 1}^{2}(\mat{R}_{s_j}^{\bar{e}})^{\trans}\mat{G}_{s_j}^{-1}\mat{A}_{i,s_j}^{-1}\mat{G}_{s_j}(\mat{C}^{e_i}_{s_j})^\trans\right]\vec{P}^{e_i}&=\biggl\{\sum_{i=1}^n \phi_i\left[( \mat{R}_t^{\bar{e}} )^{\trans}\mat{G}_t^{-1} \mat{A}_{i,t}^{-1}\mat{G}_t  ( \mat{C}^{e_i}_t )^\trans\right]\nonumber\\
&\qquad\otimes\left[( \mat{R}_s^{\bar{e}} )^{\trans}\mat{G}_s^{-1}\mat{A}_{i,s}^{-1}\mat{G}_s( \mat{C}^{e_i}_s )^\trans\right]\biggr\}\vec{P}^{e_i}.
\end{align}
Step 3 in the above algorithm can also be replaced by the following approximate process:
Compute a weighted average of the projection of the \Bezier control values on the source elements onto control values on the target using the ratio of the parametric volume of $e_i$ to the parametric volume of $\bar{e}$
  % \begin{equation}\label{eq:overlap-frac-def}
  %   \phi_i = \frac{\mathrm{vol}[\hat{\Omega}(e_i)]}{\mathrm{vol}[\hat{\Omega}(\bar{e})]}
  % \end{equation}
 as the weight.
For each source element, there is a transformation operator $\mat{A}_i$ given by \cref{eq:interval-op} that can be used to relate coefficients of the basis on the source element to coefficients of the basis on the target element.
The matrix $\mat{A}_i$ is calculated by converting the upper and lower parametric bounds of each element $e_i$ to the local coordinates of the element $\bar{e}$ and using the lower bound as $\tilde{a}$ and the upper bound as $\tilde{b}$ in \cref{eq:interval-op} with ${a}=-1$ and ${b}=1$.
For multivariate elements, the process is carried out for each parametric dimension and the matrix $\mat{A}_i$ is given by
\begin{equation}
\mat{A}_i = \bigodot_{j=1}^{d_p}\mat{A}_{i,s_j}.
\end{equation}
The weighted average of the transformed control values is
\begin{equation}
\vec{Q}^{\bar{e}} = \sum_{i=1}^n\phi_i\mat{A}_i\vec{Q}^{e_i}.
\end{equation}

\subsection{$p$-adaptivity of B-splines and NURBS}\label{sec:degr-elev-reduct}
Degree elevation or $p$-refinement can be viewed as \Bezier projection from one spline space to
another spline space of higher polynomial degree in at least one
dimension. The projection is exact. 
For tensor-product splines $p$-refinement is accomplished by
incrementing the multiplicity of each knot and the polynomial degree in any dimension that is to
be elevated. Next, the \Bezier element extraction
operators for the source mesh, the element
reconstruction operators for the target mesh, and the
transformation matrix between the Bernstein bases on the source and
target meshes are computed. We call the Bernstein transformation in
the case of $p$-refinement the degree elevation matrix. It can be obtained
using standard approaches \cite{farouki2012}.

The degree elevation matrix $\mat{E}^{p,p+1}$ used to elevate a
one-dimensional Bernstein polynomial of degree $p$ by one is a
$p+1\times p+2$ matrix with entries given by: 
\begin{align}
E^{p,p+1}_{1,1} &= 1\nonumber\\
E^{p,p+1}_{i,i+1} &= \frac{i}{p+1},\, \text{for}\,i=1,2,\dots,p+1\nonumber\\
E^{p,p+1}_{i+1,i+1} &= 1-\frac{i}{p+1},\, \text{for}\,i=1,2,\dots,p+1\nonumber\\
E^{p,p+1}_{p+1,p+2} &= 1,\nonumber\\
E^{p,p+1}_{i,j} &= 0,\, \text{otherwise}.\label{eq:elev-mat-df}
\end{align}
This is the matrix that, given the vector of Bernstein polynomials of
degree $p$ and the vector of Bernstein polynomials of degree $p+1$,
satisfies 
\[
\vec{B}^p = \mat{E}^{p,p+1}\vec{B}^{p+1}.
\]
In other words, it provides a representation of the Bernstein basis of degree
$p$ in terms of the basis of degree $p+1$. Degree elevation of Bernstein
polynomials by more than one degree can be achieved by repeated
application of degree elevation matrices or the use of optimized algorithms~\cite{szafnicki2005}.
For multivariate tensor-product splines, the degree elevation matrix
is constructed from the Kronecker product of one-dimensional
degree elevation matrices. If the original degree in each dimension is given by the degree vector $\vec{p}=\{p_1,p_2,\dots,p_d\}$ and the final degree in each dimension is given by $\vec{p}^{\prime}=\{p^\prime_1,p^\prime_2,\dots,p^\prime_d\}$ subject to the constraint that $p^{\prime}_i \geq p_i$ then
\begin{equation}\label{eq:elev-mat-kp}
\mat{E}^{\vec{p},\vec{p}^{\prime}} = \mat{E}^{p_d,p_d^{\prime}}\otimes\cdots\otimes\mat{E}^{p_1,p_1^{\prime}}.
\end{equation}
The case where some dimensions are elevated and others are not can be
accommodated by requiring that $\mat{E}^{p_i,p_i}$ be equal to the
identity matrix of dimension $p_i+1$.

\begin{algorithm}\label{alg:spline-elev}
  Degree elevation of a B-spline or NURBS ($p$-refinement)
  \begin{enumerate}
  \item Create the target mesh by incrementing
    the degree and knot multiplicity in each parametric direction that is to be
    elevated. 
  \item Perform the \Bezier projection
    \begin{equation}\label{eq:bez-proj-elev}
    \vec{P}^{e,b} = (\mat{R}^{e,b})^{\trans}(\mat{E}^{\vec{p},\vec{p}^{\prime}})^{\trans}(\mat{C}^{e,a})^{\trans}\vec{P}^{e,a}     
    \end{equation}
  \end{enumerate}
Because the spline spaces are nested
the weighted averaging step is not required. 
\end{algorithm}

An example of a single degree elevation in one dimension is shown in \cref{fig:elevate-example}.
The original curve is shown in the center.
The original basis is defined by the knot vector
\begin{equation}
  \label{eq:deg-example-source}
\left\{ 0,0,0,0,\sfrac{1}{3},\sfrac{1}{3},\sfrac{2}{3},\sfrac{2}{3},1,1,1,1 \right\}.
\end{equation}
The elevated curve is shown on the right and the elevated basis is defined by the knot vector 
\begin{equation}
\label{eq:deg-elev-example-target}
\left\{ 0,0,0,0,0,\sfrac{1}{3},\sfrac{1}{3},\sfrac{1}{3},\sfrac{2}{3},\sfrac{2}{3},\sfrac{2}{3},1,1,1,1,1 \right\}.
\end{equation}
It can be seen that the elevated knot vector is obtained by increasing the multiplicity of each knot in the original knot vector by one.
The basis functions generated by these knot vectors are shown above the curves and the basis functions are colored to match the associated control points.
The extraction operator on the second element $e_2$ in the source mesh is
\begin{equation}
\label{eq:elev-example-source-ext}
\mat{C}^{e_2,a}=
\begin{bmatrix}\sfrac{1}{2} & 0 & 0 & 0\\\sfrac{1}{2} & 1 & 0 & 0\\0 & 0 & 1 & \sfrac{1}{2}\\0 & 0 & 0 & \sfrac{1}{2}\end{bmatrix}
\end{equation}
The Bernstein degree elevation matrix to elevate from degree 3 to degree 4 is given by \cref{eq:elev-mat-df} as
\begin{equation}
\label{eq:deg-elev-mat-3-4}
\mat{E}^{3,4}=\begin{bmatrix}1 & \sfrac{1}{4} & 0 & 0 & 0\\0 & \sfrac{3}{4} & \sfrac{1}{2} & 0 & 0\\0 & 0 & \sfrac{1}{2} & \sfrac{3}{4} & 0\\0 & 0 & 0 & \sfrac{1}{4} & 1\end{bmatrix}.
\end{equation}
The extraction operator on second element in the target mesh defined by \cref{eq:deg-elev-example-target} is 
\begin{equation}
\label{eq:deg-elev-example-target-ext-op}
\mat{C}^{e_2,b}=\begin{bmatrix}\sfrac{1}{2} & 0 & 0 & 0 & 0\\\sfrac{1}{2} & 1 & 0 & 0 & 0\\0 & 0 & 1 & 0 & 0\\0 & 0 & 0 & 1 & \sfrac{1}{2}\\0 & 0 & 0 & 0 & \sfrac{1}{2}\end{bmatrix}
\end{equation}
and so the associated reconstruction operator is
\begin{equation}
\label{eq:deg-elev-example-target-rec-op}
\mat{R}^{e_2,b}=\begin{bmatrix}2 & 0 & 0 & 0 & 0\\-1 & 1 & 0 & 0 & 0\\0 & 0 & 1 & 0 & 0\\0 & 0 & 0 & 1 & -1\\0 & 0 & 0 & 0 & 2\end{bmatrix}.
\end{equation}
These are the matrices required for \cref{eq:bez-proj-elev}.
The result of using these matrices and the appropriate matrices for the other elements in the mesh to elevate the original curve in the lower center of \cref{fig:elevate-example} is shown on the lower right of the same figure.
The original curve and control points are shown in gray on the right for reference.
It can be seen that the elevated curve exactly represents the original curve and that the new control points lie on the lines connecting the original control points.
\begin{figure}[htb]
  \centering
  \begin{tabular}{ *{3}{c} }
    degree reduced&original curve&degree elevated\\
    \includegraphics[width=2in]{./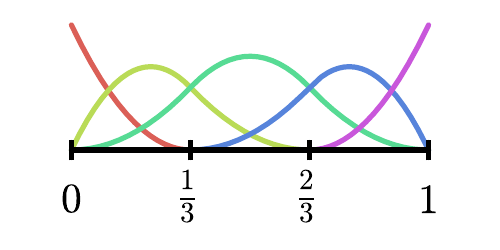}&\includegraphics[width=2in]{./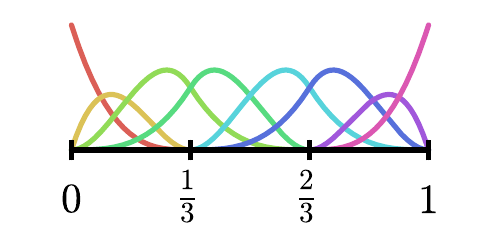}&\includegraphics[width=2in]{./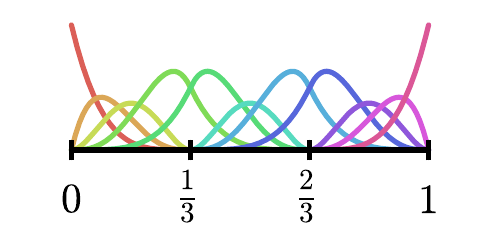}\\
    \includegraphics[width=2in]{./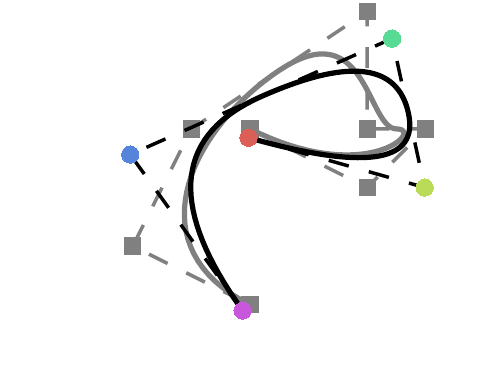}&\includegraphics[width=2in]{./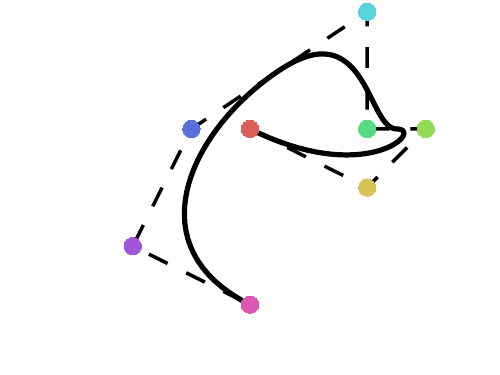}&\includegraphics[width=2in]{./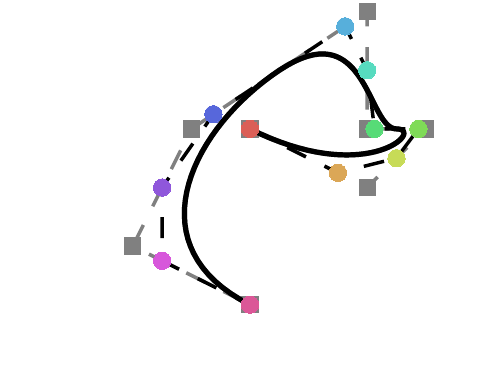}
  \end{tabular}
  \caption{\label{fig:elevate-example}B-spline elevation and reduction by \Bezier projection.}
\end{figure}

Degree reduction or $p$-coarsening can be viewed as \Bezier projection from one spline space to
another spline space of lower polynomial degree in at least one
dimension. The projection is approximate. For tensor-product splines $p$-coarsening is accomplished by
decrementing the multiplicity of each knot and the polynomial degree in any dimension that is to
be reduced. Next, the \Bezier element extraction
operators for the source mesh, the element
reconstruction operators for the target mesh, and the
transformation matrix between the Bernstein bases on the source and
target meshes are computed.

The transformation matrix that provides the best $L^2$ approximation
of the Bernstein basis of degree $p$ by the basis of degree $p-1$ is
given by the right pseudoinverse of the matrix $\mat{E}^{p-1,p}$ defined by \cref{eq:elev-mat-df} 
\begin{equation}\label{eq:deg-red-mat}
\mat{D}^{p,p-1} = (\mat{E}^{p-1,p})^\trans\left[ \mat{E}^{p-1,p}(\mat{E}^{p-1,p})^\trans \right]^{-1}.
\end{equation}
This is due to the fact that the best $L^2$ projection for polynomial
degree reduction is given by the best Euclidean approximation of the
the \Bezier coefficients \cite{lutterkort1999,peters2000}. 
In other words, for degree reduction of Bernstein polynomials, $L^2$
projection of the functions reduces to $\ell^2$ projection of the
function coefficients. The multivariate transformation matrix is given
by the Kronecker product of the one-dimensional matrices 
\begin{equation}
\label{eq:deg-red-mat-kp}
\mat{D}^{\vec{p},\vec{p}^{\prime}} = \mat{D}^{p_d,p_d^{\prime}}\otimes\cdots\otimes\mat{D}^{p_1,p_1^{\prime}}
\end{equation}
where $p_i^{\prime}\leq p_i$.
\begin{algorithm}\label{alg:spline-red}
  Degree reduction of a B-spline or NURBS ($p$-coarsening)
  \begin{enumerate}
  \item Create the target mesh by decrementing the
    degree and knot multiplicity in each parametric direction that is to be
    reduced. 
  \item Perform the \Bezier projection
    \begin{equation}
    \vec{P}^{e,b} = (\mat{R}^{e,b})^{\trans}(\mat{D}^{\vec{p},\vec{p}^{\prime}})^{\trans}(\mat{C}^{e,a})^{\trans}\vec{P}^{e,a}     
    \end{equation}
  \item Smooth the result
\begin{equation}
  \vec{P}^b_A = \sum_{e\in \mathsf{E}_A} \omega_A^e \vec{P}_A^{e,b}.
\end{equation}
  \end{enumerate}
\end{algorithm}

An example of this process is given in \cref{fig:elevate-example}.
The  degree-reduced basis is
defined by the knot vector \[\left\{
  0,0,0,\sfrac{1}{3},\sfrac{2}{3},1,1,1 \right\}.\]
The original basis is again defined by \cref{eq:deg-example-source} and so the exraction operator on the second element of the source mesh is given by \cref{eq:elev-example-source-ext}.
The extraction operator for the second element in the target mesh $e_2$ is
\begin{equation}
\label{eq:red-ext-op-example}
\mat{C}^{e_2,b}=
\begin{bmatrix}\sfrac{1}{2} & 0 & 0\\\sfrac{1}{2} & 1 & \sfrac{1}{2}\\0 & 0 & \sfrac{1}{2}\end{bmatrix}
\end{equation}
and so the associated reconstruction operator is
\begin{equation}
\label{eq:red-reconst-op-example}
\mat{R}^{e_2,b}=
\begin{bmatrix}
2 & 0 & 0\\
-1 & 1 & -1\\
0 & 0 & 2
\end{bmatrix}.
\end{equation}
The degree reduction operator from degree 3 to degree 2 is
\begin{equation}
\label{eq:red-mat-example}
\mat{D}^{3,2}=\frac{1}{20}\begin{bmatrix}
19 & - 5 & 1\\
3 & 15 & - 3\\
- 3 & 15 & 3\\
1 & - 5 & 19
\end{bmatrix}.
\end{equation}
The averaging weights for the target basis are the same as for the example illustrated in \cref{fig:weights}.
The result of applying \cref{alg:spline-red} to degree reduce the curve in the lower center of \cref{fig:elevate-example} is shown on the lower left of the same figure and the original curve is shown in gray for reference; it is
apparent that the degree reduced curve is only an approximation to the
original curve. This is in contrast to the elevated case in which the curve was
preserved exactly. 

\subsection{$p$-adaptivity of T-splines}
Degree elevation of a T-spline is achieved by increasing the degree
and multiplicity of each edge in each elevation direction 
by one and propagating T-junctions through any new repeated edges. 
To ensure nestedness of degree elevated T-splines requires
that the extended source and target T-meshes
are nested and analysis-suitable. This is a mild
generalization of~\cref{thm:nested}.

Consider the even-to-odd degree elevation example shown in
\cref{fig:t-spline-elevate-even-odd}. 
A representative source mesh is shown in \cref{fig:t-spline-elevate-even-odd}a.
The degree and multiplicity of each edge in each elevation direction is increased
by one and T-junctions are propagated through any new repeated edges. 
This process produces the target mesh shown in
\cref{fig:t-spline-elevate-even-odd}b. It can be easily verified that
the extended T-meshes are nested. This ensures that 
the quadratic T-spline basis function, anchored at the orange diamond
in \cref{fig:t-spline-elevate-even-odd}a,
can be exactly represented in the degree elevated T-mesh by the four
cubic functions anchored at the blue diamonds in \cref{fig:t-spline-elevate-even-odd}b.

Now consider the odd-to-even degree elevation example shown in
\cref{fig:t-spline-elevate-odd-even}.
A representative source mesh is shown in \cref{fig:t-spline-elevate-odd-even}a.
Again, the degree and multiplicity of each edge in each elevation direction is
increased by one and T-junctions are propagated through any new
repeated edges.
However, in this case the extended T-meshes are not nested due to the
repeated vertical edges at $\frac{3}{5}$. Inspection of the mesh shown in
\cref{fig:t-spline-elevate-odd-even}b reveals that the nine quadratic
functions, anchored at the blue diamonds in \cref{fig:t-spline-elevate-odd-even}b, needed to represent the
linear function anchored at the orange diamond in \cref{fig:t-spline-elevate-odd-even}a, are not reproduced by
the refined T-mesh in \cref{fig:t-spline-elevate-odd-even}b. In this
case, each T-junction must be extended across a single parametric
element and through the repeated edges on the opposite side as shown
in \cref{fig:t-spline-elevate-odd-even}c.

\begin{figure}[htb]
  \centering
  \begin{tabular}{ *{2}{c}}
    \includegraphics[width=2in]{./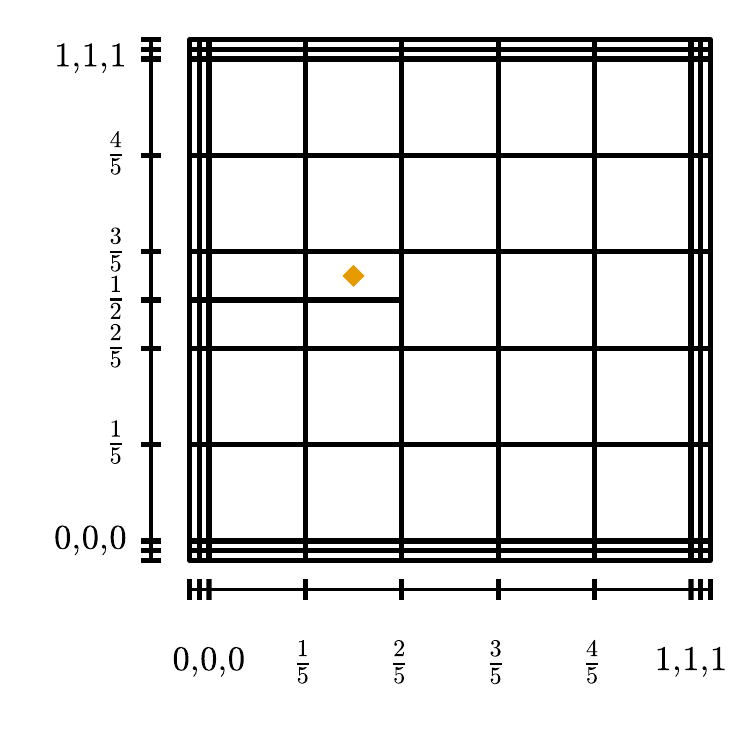}&\includegraphics[width=2in]{./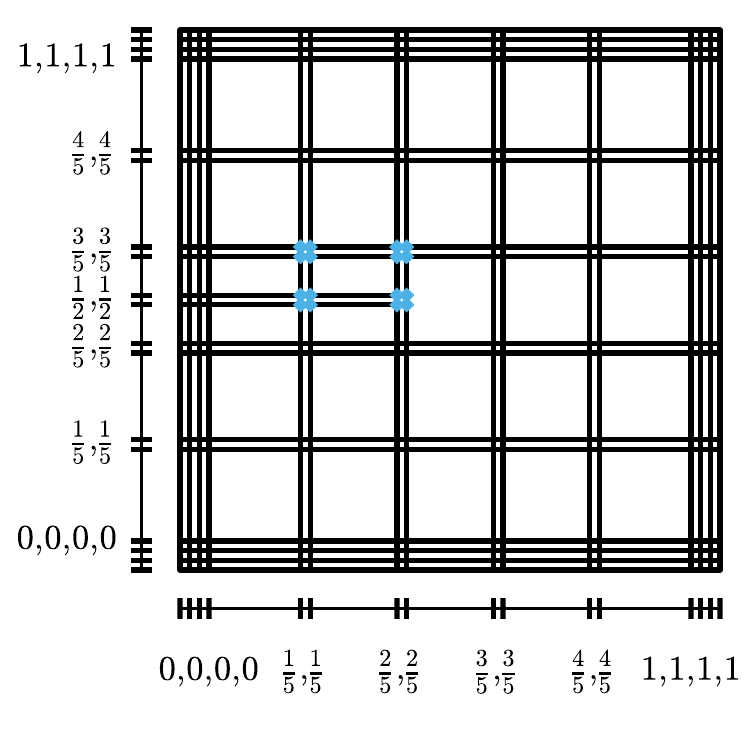}\\
   (a) original mesh ($p=2$)& (b) elevated mesh ($p=3$)
  \end{tabular}
  \caption{\label{fig:t-spline-elevate-even-odd}T-spline elevation even to odd.}
\end{figure}

\begin{figure}[htb]
  \centering
  \begin{tabular}{ *{2}{c}}
    \includegraphics[width=2in]{./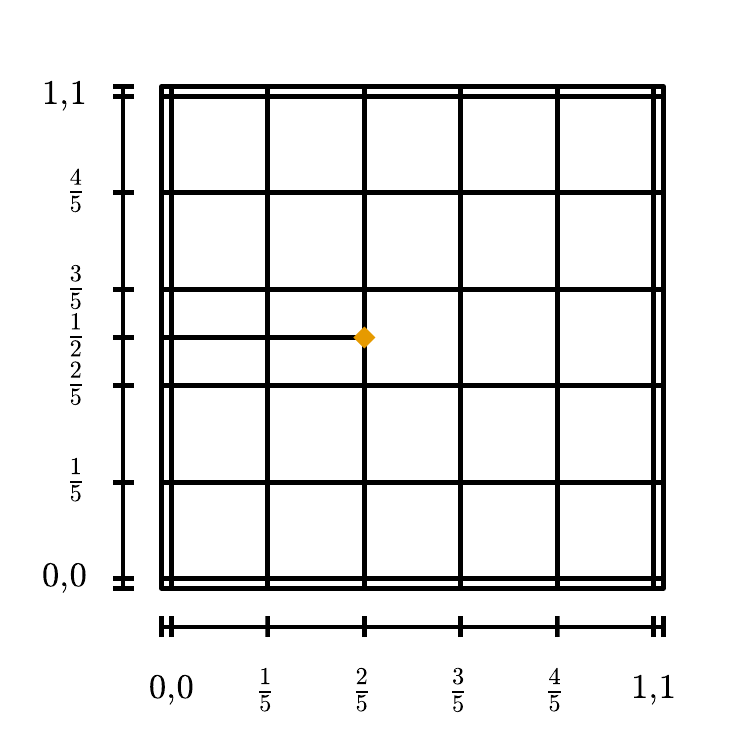}&\includegraphics[width=2in]{./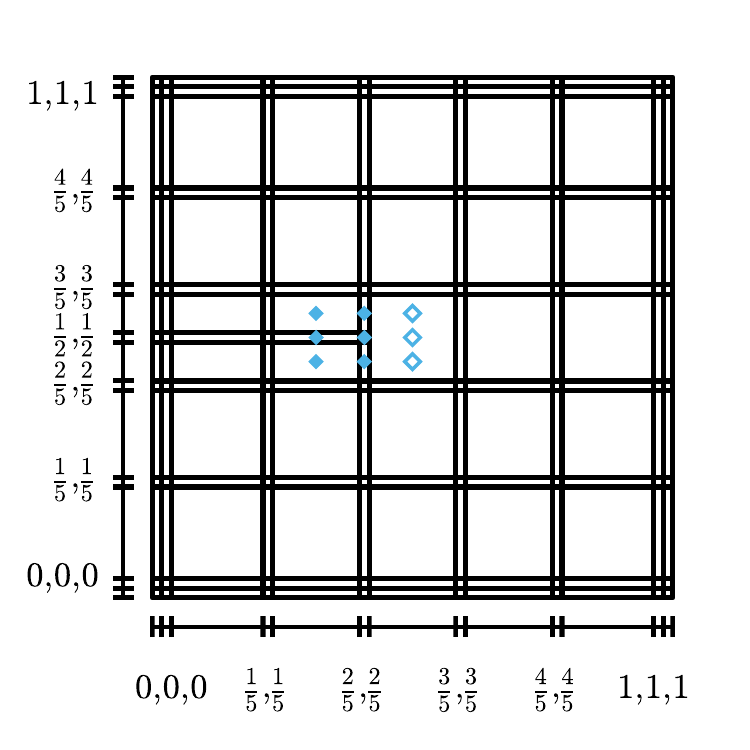}\\ 
    (a) original mesh ($p=1$)& (b) elevated mesh ($p=2$)\\
    \includegraphics[width=2in]{./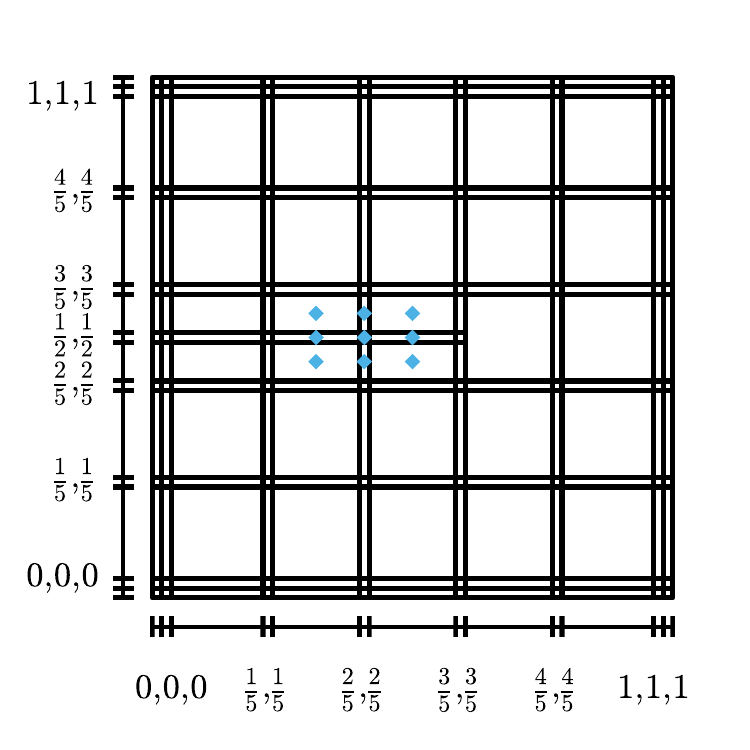}&\\
    (c) extended elevated mesh ($p=2$)&
  \end{tabular}
  \caption{\label{fig:t-spline-elevate-odd-even}T-spline elevation odd to even.}
\end{figure}
\clearpage

\begin{algorithm}\label{alg:t-spline-elev}
Degree elevation of an analysis-suitable T-spline ($p$-refinement)
\begin{enumerate}
\item Create the target mesh by incrementing the degree and
  multiplicity of all T-mesh edges in each parametric direction that is to be elevated.
\item Extend all T-junctions through any repeated edges.
\item Modify T-junctions so that
  $\mathsf{T}^a_{\mathrm{ext}}\subseteq \mathsf{T}^b_{\mathrm{ext}}$
  and $\mathsf{T}^b_{\mathrm{ext}}$ is analysis-suitable.
\item Continue from step 2 of \cref{alg:spline-elev}. 
\end{enumerate}
\end{algorithm}

An example of T-spline degree elevation is shown in \cref{fig:ts-project-p-refine}.
The original T-mesh for a $C^1$ T-spline of degree 3 in each direction is shown in the upper center of the figure.
Repeated knots are indicated by closely spaced edges.
A random surface is generated by assigning a uniformly distributed random value between $0$ and $\sfrac{1}{2}$ to the elevation values of the control points that generate a linearly parameterized surface.
This random surface is shown in the lower center.
The degree elevation algorithm for ASTS is applied to the original mesh to obtain the degree-elevated mesh shown in the upper right.
The result of applying the \Bezier projection algorithm to compute the elevated surface shown in the lower right.
The original surface is shown as a wireframe for comparison while the elevated surface is blue.
It can be seen that the elevated surface coincides exactly with the
original surface.
\begin{figure}[htb]
  \centering
  \begin{tabular}{ *{3}{c} }
    degree-reduced T-mesh&original T-mesh&degree-elevated T-mesh\\
    \includegraphics[width=2in]{./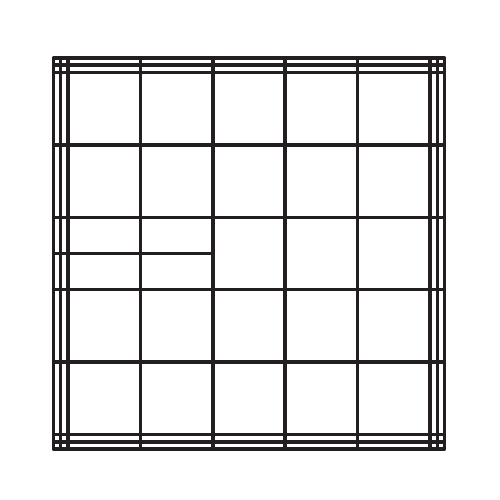}&\includegraphics[width=2in]{./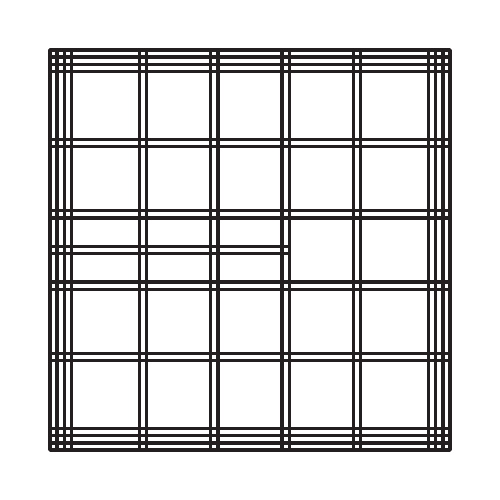}&\includegraphics[width=2in]{./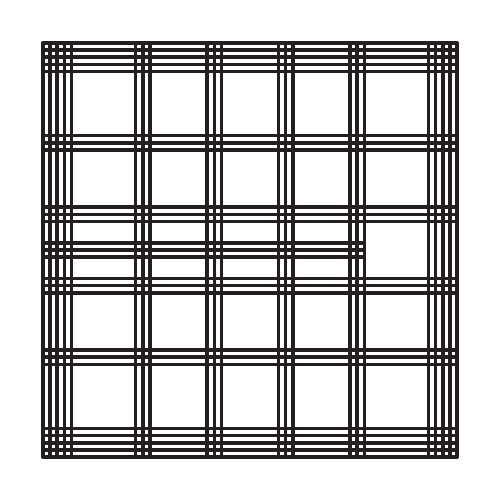}\\
    \includegraphics[width=2in]{./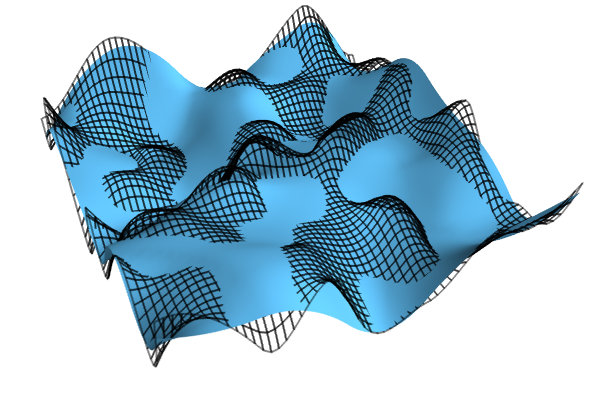}&\includegraphics[width=2in]{./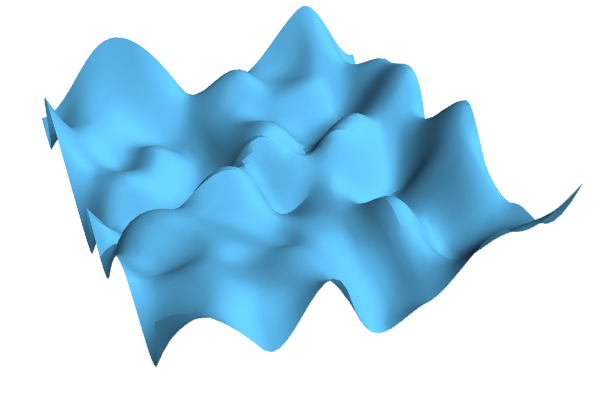}&\includegraphics[width=2in]{./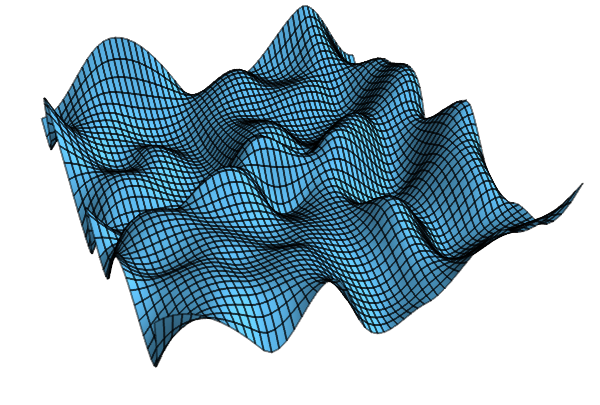}\\
    degree-reduced surface&original surface&degree-elevated surface
  \end{tabular}
  \caption{\label{fig:ts-project-p-refine}
    T-spline reduction and elevation by \Bezier projection.
}
\end{figure}

\begin{algorithm}\label{alg:t-spline-red}
Degree reduction of an analysis-suitable T-spline ($p$-coarsening)
\begin{enumerate}
\item Create the target mesh by decrementing the degree and
  multiplicity of all T-mesh edges in each parametric direction that is to be reduced.
\item Modify T-junctions so that
  $\mathsf{T}^a_{\mathrm{ext}}\supseteq \mathsf{T}^b_{\mathrm{ext}}$
  and $\mathsf{T}^b_{\mathrm{ext}}$ is analysis-suitable.
\item Continue from step 2 of \cref{alg:spline-red}.
\end{enumerate}
\end{algorithm}

An example of T-spline degree reduction is shown in \cref{fig:ts-project-p-refine}.
The degree reduction algorithm is applied to the original T-mesh in
the upper center to obtain the reduced T-mesh shown on the upper left.
The result of applying the \Bezier projection algorithm to compute the
reduced surface is shown in the lower left.
The original surface is shown as a wireframe for comparison while the reduced surface is blue.
It can be seen that while the reduced surface has an overall shape
similar to the original surface, as expected, the two surfaces do not
match.

\begin{remark}
Due to T-junction extension in degree elevation and T-junction
retraction in degree reduction the number of \Bezier elements defined
by the source and target T-meshes may be different. In that case,
\cref{alg:large-to-small-proj} may be required to compute the projection.
\end{remark}

\subsection{$k$-adaptivity of B-splines and NURBS}
\label{sec:knot-insert-remov}
Basis roughening or $k$-refinement is achieved by increasing the
multiplicity of some or all of the knots. Basis smoothing or
$k$-coarsening is achieved by decreasing the multiplicity of some or all
of the knots.
\begin{algorithm}\label{alg:knot-insertion-spline}
Roughening of a NURBS or B-spline ($k$-refinement)
\begin{enumerate}
\item Create the target mesh by incrementing the knot multiplicity of
  some of the knots in each parametric direction that is to be
  roughened.
\item Perform the \Bezier projection
\begin{equation}
\label{eq:rough-smooth-local}
\vec{P}^{e,b} = (\mat{R}^{e,b})^{\trans}(\mat{C}^{e,a})^{\trans}\vec{P}^{e,a}.
\end{equation}
\end{enumerate}
Because the spline spaces are nested
the weighted averaging step is not required. 
\end{algorithm}

\begin{algorithm}\label{alg:knot-removal-spline}
Smoothing of a NURBS or B-spline ($k$-coarsening)
\begin{enumerate}
\item Create the target mesh be decrementing the knot multiplicity of
  some of the knots in each parametric direction that is to be
  smoothed.
\item Perform the \Bezier projection using \cref{eq:rough-smooth-local}.
\item Smooth the result
\begin{equation}
  \vec{P}^b_A = \sum_{e\in \mathsf{E}_A} \omega_A^e \vec{P}_A^{e,b}.
\end{equation}
\end{enumerate} 
\end{algorithm}
\begin{figure}[htb]
  \centering
  \begin{tabular}{ *{3}{c}}
    smoothed&original curve&roughened\\
    \includegraphics[width=2in]{./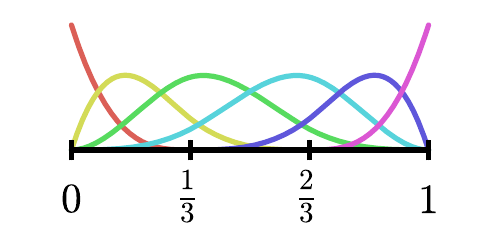}&\includegraphics[width=2in]{./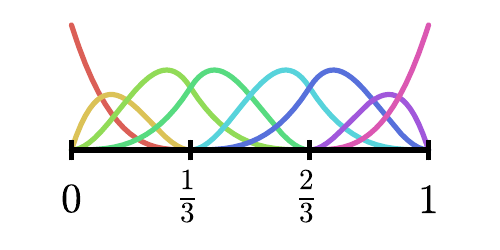}&\includegraphics[width=2in]{./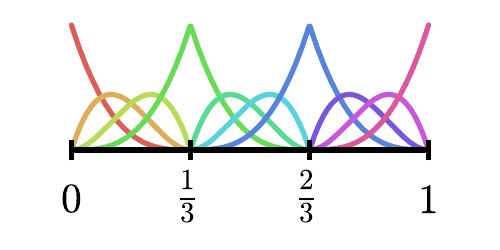}\\ 
    \includegraphics[width=2in]{./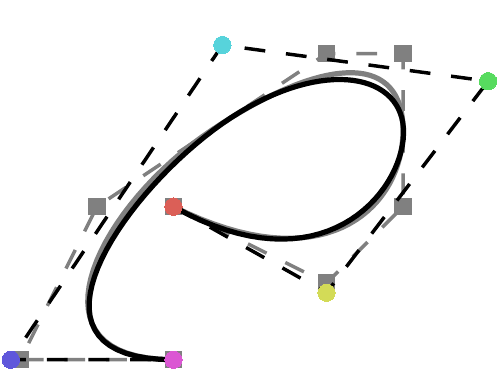}&\includegraphics[width=2in]{./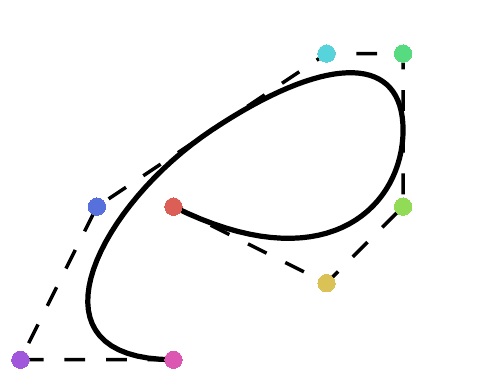}&\includegraphics[width=2in]{./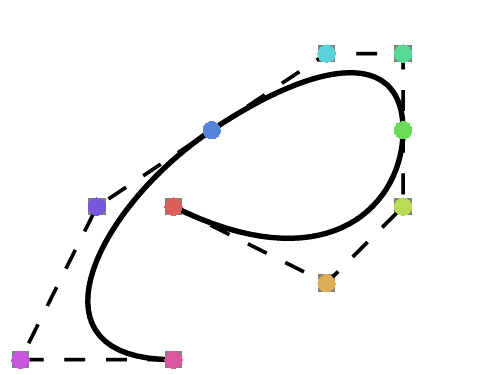} 
  \end{tabular}
  \caption{\label{fig:roughen-example}B-spline roughening and smoothing by \Bezier projection.}
\end{figure}
Basis roughening and coarsening for a one-dimensional B-spline curve
are illustrated in \cref{fig:roughen-example}. 
We begin with a cubic B-spline basis defined by the knot vector
\begin{equation}
\label{eq:k-ref-source-mesh}
\left\{ 0,0,0,0,\sfrac{1}{3},\sfrac{1}{3},\sfrac{2}{3},\sfrac{2}{3},1,1,1,1 \right\}.
\end{equation}
The basis is shown in the upper center of \cref{fig:roughen-example}.
The extraction operator on the second element $e_2$ defined by this knot vector is the same as for the degree elvation example given previously in \cref{eq:elev-example-source-ext}
A set of control points is chosen to define the curve shown in the lower center of the figure.
A refined or roughened basis is defined by the knot vector
\begin{equation}
\label{eq:k-ref-target-mesh}
\left\{ 0,0,0,0,\sfrac{1}{3},\sfrac{1}{3},\sfrac{1}{3},\sfrac{2}{3},\sfrac{2}{3},\sfrac{2}{3},1,1,1,1 \right\}.
\end{equation}
Because the knot multiplicity is equal to the polynomial degree, the extraction and reconstruction operators defined by this knot vector are identity matrices of size $p+1$.

A smoothed basis is defined by the knot vector
\begin{equation}
\label{eq:k-coarse-target-mesh}
\left\{ 0,0,0,0,\sfrac{1}{3},\sfrac{2}{3},1,1,1,1 \right\}.
\end{equation}
The element extraction and reconstruction operators for the second element in the smoothed mesh are
\begin{equation}
\label{eq:k-coarse-ext-op}
\mat{C}^{e_2,b}=\begin{bmatrix}\sfrac{1}{4} & 0 & 0 & 0\\\sfrac{7}{12} & \sfrac{2}{3} & \sfrac{1}{3} & \sfrac{1}{6}\\\sfrac{1}{6} & \sfrac{1}{3} & \sfrac{2}{3} & \sfrac{7}{12}\\0 & 0 & 0 & \sfrac{1}{4}\end{bmatrix}
\end{equation}
and 
\begin{equation}
\label{eq:k-coarse-rec-op}
\mat{R}^{e_2,b}\begin{bmatrix}4 & 0 & 0 & 0\\-4 & 2 & -1 & 1\\1 & -1 & 2 & -4\\0 & 0 & 0 & 4\end{bmatrix}.
\end{equation}

The roughened and smoothed bases are shown in the upper right and left
of \cref{fig:roughen-example} and the projection of the original curve
onto the new basis is shown below each. 
It is apparent that the refined basis exactly represents the original
curve (shown in gray for comparison) and that the number of control
points has been increased (indicating the increased dimension of the
roughened spline space). 
In contrast, the coarsened basis cannot fully represent the original
curve because of the increased continuity and the reduced size of the
basis. 

\subsection{$k$-adaptivity of T-splines}
Roughening or $k$-refinement of a T-spline is achieved by increasing the multiplicity of
each edge in each roughening direction by one and propagating
T-junctions through any new repeated edges.
To ensure nestedness of roughened or $k$-refined T-splines requires
that the extended source and target T-meshes are nested and
analysis-suitable as described in~\cref{thm:nested}.
\begin{algorithm}\label{alg:knot-insertion-t-spline}
Roughening of a T-spline ($k$-refinement)
\begin{enumerate}
\item Create the target mesh by incrementing the multiplicity of any
  edges that are to be roughened.
\item Extend all T-junctions through any repeated edges.
\item Modify T-junctions so that
  $\mathsf{T}_{\mathrm{ext}}^a\subseteq\mathsf{T}^b_{\mathrm{ext}}$
  and $\mathsf{T}^b$ is analysis-suitable.
\item Continue from step 2 of \cref{alg:knot-insertion-spline}
\end{enumerate} 
\end{algorithm}
Smoothing of a T-spline is the opposite of roughening.
\begin{algorithm}\label{alg:knot-removal-t-spline}
Smoothing of a T-spline ($k$-coarsening)
\begin{enumerate}
\item Create the target mesh by decrementing the multiplicity of any
  edges that are to be smoothed.
\item Modify T-junctions so that
  $\mathsf{T}_{\mathrm{ext}}^b\subseteq\mathsf{T}^a_{\mathrm{ext}}$
  and $\mathsf{T}^b$ is analysis-suitable.
\item Continue from step 2 of \cref{alg:knot-removal-spline}.
\end{enumerate} 
\end{algorithm}
Examples of T-spline roughening and smoothing are
shown in \cref{fig:ts-project-k-refine}. 
The same random surface used for T-spline elevation and reduction is used.
\begin{figure}[htb]
  \centering
  \begin{tabular}{ *{3}{c} }
    smoothed&original surface&roughened\\
    \includegraphics[width=2in]{./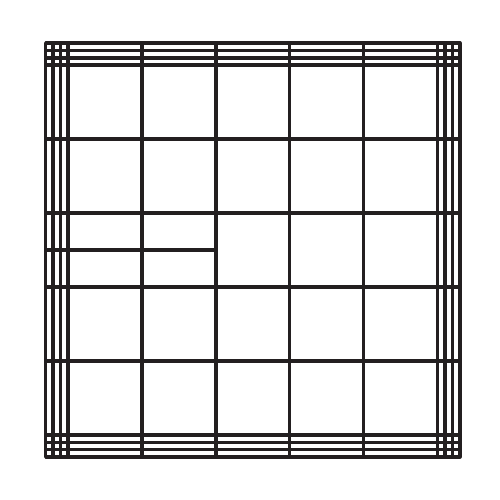}&\includegraphics[width=2in]{./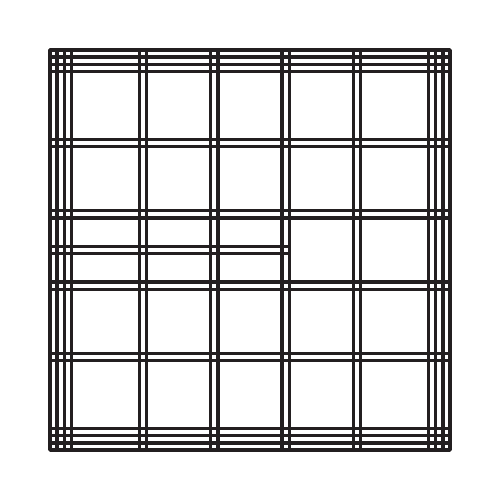}&\includegraphics[width=2in]{./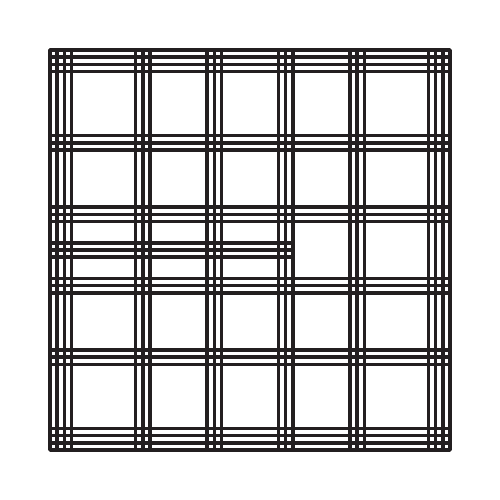}\\
    \includegraphics[width=2in]{./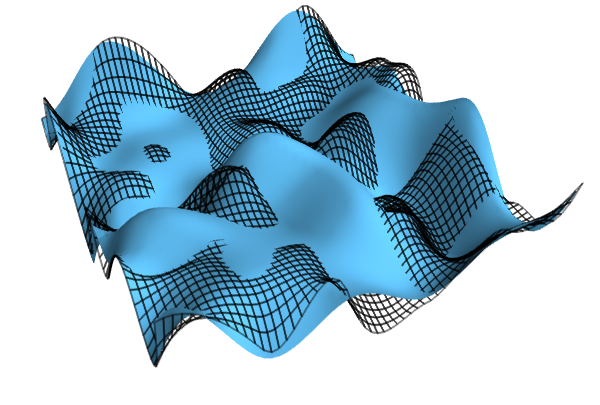}&\includegraphics[width=2in]{./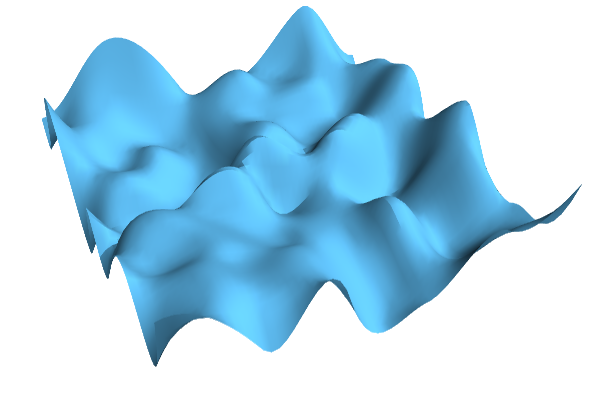}&\includegraphics[width=2in]{./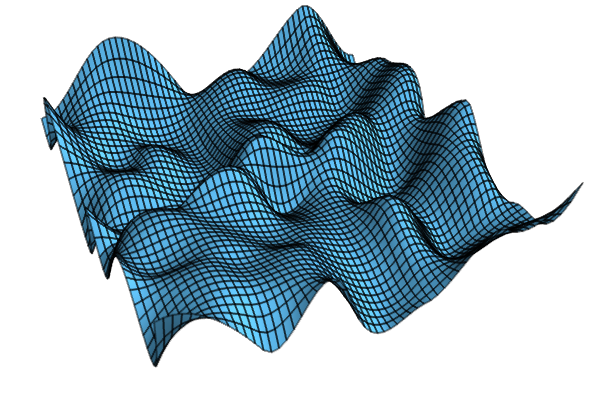}
  \end{tabular}
  \caption{\label{fig:ts-project-k-refine} T-spline roughening and smoothing by \Bezier projection.}
\end{figure}

\subsection{$h$-adaptivity of B-splines and NURBS}
\label{sec:knot-insert-remov-1}
Subdivision or $h$-refinement is achieved by subdividing a set of knot
intervals or elements with non-zero parametric length (area/volume).
\begin{algorithm}\label{alg:h-refine-spline}
  Subdivision for a B-spline or NURBS ($h$-refinement)
  \begin{enumerate}
  \item Create the target mesh by subdividing knot intevals with non-zero parametric length.
  \item Perform the \Bezier projection in \cref{alg:large-to-small-proj}.
  \end{enumerate}
  Because the spline spaces are nested
  the weighted averaging step is not required. 
\end{algorithm}
To illustrate \cref{alg:h-refine-spline} and in particular the application of \cref{alg:large-to-small-proj} we derive global $h/2$
refinement of a univariate B-spline using \Bezier projection. The knot vector is
\begin{equation}
\label{eq:h-refine-example-orig}
\{ 0,0,0,\sfrac{1}{4},\sfrac{1}{2},\sfrac{3}{4},1,1,1 \}.
\end{equation}
The quadratic basis is shown in the upper center of \cref{fig:refine-example}.
A new knot vector is constructed by dividing every nonzero knot
interval in half to obtain 
\begin{equation}
\label{eq:h-refine-example}
\{ 0,0,0,\sfrac{1}{8}, \sfrac{1}{4}, \sfrac{3}{8}, \sfrac{1}{2},
\sfrac{5}{8},\sfrac{3}{4},\sfrac{7}{8},1,1,1 \}. 
\end{equation}
The refined basis is shown on the upper right of \cref{fig:refine-example}.
The element extraction operators for both the source and target meshes can be
be computed using the methods given by \citet{Borden:2010nx} and
\citet{ScBoHu10}. All that remains in order to use
\cref{alg:large-to-small-proj} is the computation of $\mat{A}$. 
As stated in \cref{alg:large-to-small-proj}, the transformation
operator is found by converting the bounds of the small (target)
element to the local coordinates of the large (source) element and
then using \cref{eq:interval-op}. 
We follow the standard finite-element convention that the local
coordinate system of an element is the domain $[-1,1]$. 
For each element in the source mesh, we must project onto two new
elements, one bounded by $-1$ and $0$ in the local coordinate system
of the source element and the other bounded by $0$ and $1$. 
There is a distinct transformation operator associated with each of
the new elements which we denote $\mat{A}_l$ for the left subelement
(the subdomain $[-1,0]$) and $\mat{A}_r$ for the right subelement (the
subdomain $[0,1]$). 
The boundaries of each subelement are used for $\tilde{a}$ and
$\tilde{b}$ while $-1$ and $1$ are used for $a$ and $b$ in
\cref{eq:interval-op} to obtain 
\begin{equation}
\label{eq:lr-interval-ops}
\mat{A}_l =
\begin{bmatrix}
  1 & 0 & 0 \\
\sfrac{1}{2} & \sfrac{1}{2} & 0\\
\sfrac{1}{4} & \sfrac{1}{2} & \sfrac{1}{4}
\end{bmatrix},\qquad
\mat{A}_r =
\begin{bmatrix}
\sfrac{1}{4} & \sfrac{1}{2} & \sfrac{1}{4}\\
0 & \sfrac{1}{2} & \sfrac{1}{2} \\
  0 & 0 & 1
\end{bmatrix}.
\end{equation}
The extraction operator for the first element in the source mesh
(corresponding to the interval $[0,\sfrac{1}{4}]$) is 
\begin{align}
\mat{C}^{e_1}&=\begin{bmatrix}  1 &  0 &  0\\ 0 &  1 &  \sfrac{1}{2}\\ 0 &  0 &  \sfrac{1}{2}\end{bmatrix}.\label{eq:local-ext-ops-h-refine-source}\\
\end{align}
The element extraction operators for the first two elements of the
target mesh (corresponding to the intervals $[0,\sfrac{1}{8}]$ and
$[\frac{1}{8},\sfrac{1}{4}]$) 
are
\begin{align}
\mat{C}^{e^{\prime}_1}&=\begin{bmatrix}  1 &  0 &  0\\ 0 &  1 &
  \sfrac{1}{2}\\ 0 &  0 &
  \sfrac{1}{2}\end{bmatrix},\label{eq:local-ext-ops-h-refine-target-1}\\ 
\mat{C}^{e^{\prime}_2}&=\begin{bmatrix}  \sfrac{1}{2} &   0 &   0\\
  \sfrac{1}{2} &   1 &   \sfrac{1}{2}\\  0 &   0 &
  \sfrac{1}{2}\end{bmatrix}.\label{eq:local-ext-ops-h-refine-target-2} 
\end{align}
The reconstruction operators for these elements are
\begin{align}
\mat{R}^{e^{\prime}_1}&=\begin{bmatrix}1 & 0 & 0\\0 & 1 & -1\\0 & 0 & 2\end{bmatrix},\label{eq:rec-ops-h-refine-target-1}\\
\mat{R}^{e^{\prime}_2}&=\begin{bmatrix}2 & 0 & 0\\-1 & 1 & -1\\0 & 0 & 2\end{bmatrix}.\label{eq:rec-ops-h-refine-target-2}
\end{align}

The control points associated with the first interval in the target
mesh are given by 
\begin{equation}
\vec{P}^{e^{\prime}_1} =
(\mat{R}^{e^{\prime}_1})^{\trans}\mat{A}_l(\mat{C}^{e_1})^{\trans}\vec{P}^{e_1} 
\end{equation}
where $\vec{P}^{e_1} = \left\{ \vec{P}_1, \vec{P}_2, \vec{P}_3
\right\}^{\trans}$ and $\vec{P}^{e_1^{\prime}} = \left\{
  \vec{P}_1^{\prime}, \vec{P}_2^{\prime}, \vec{P}_3^{\prime}
\right\}^{\trans}$ 
and the control points for the second interval are given by
\begin{equation}
\vec{P}^{e^{\prime}_2} =
(\mat{R}^{e^{\prime}_2})^{\trans}\mat{A}_r(\mat{C}^{e_1})^{\trans}\vec{P}^{e_1} 
\end{equation}
where $\vec{P}^{e_2^{\prime}} = \left\{ \vec{P}_2^{\prime}, \vec{P}_3^{\prime}, \vec{P}_4^{\prime} \right\}^{\trans}$.
Because the source space is a subspace of the target space, the values
computed for $\vec{P}_2^{\prime}$ and $\vec{P}_3^{\prime}$ will be the
same for both of the target elements. 
The results of carrying the projection process out for the control
points that define the curve in the lower center of
\cref{fig:refine-example} are shown on the right of the same figure. 
\begin{figure}[htb]
  \centering
  \begin{tabular}{ *{3}{c} }
    coarsened&original curve&refined\\
    \includegraphics[width=2in]{./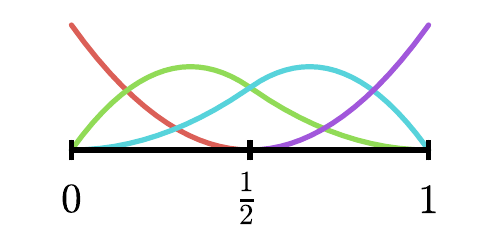}&\includegraphics[width=2in]{./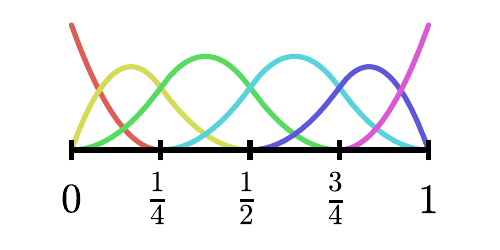}&\includegraphics[width=2in]{./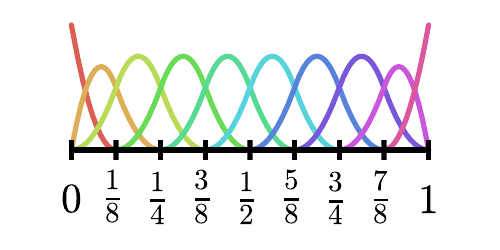}\\ 
    \includegraphics[width=2in]{./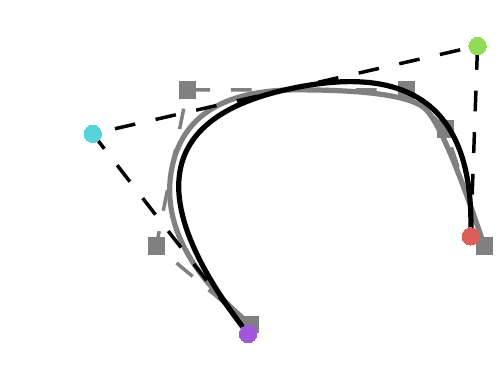}&\includegraphics[width=2in]{./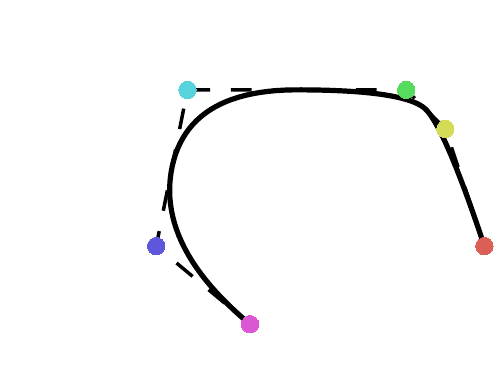}&\includegraphics[width=2in]{./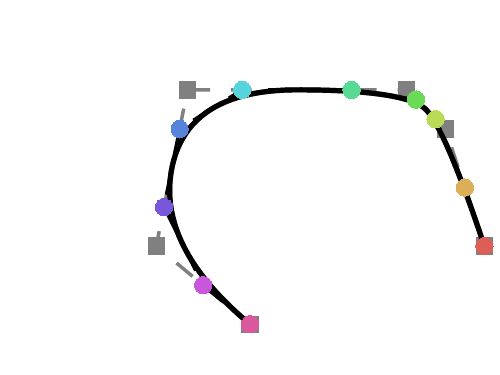}
  \end{tabular}
  \caption{\label{fig:refine-example}Refinement and coarsening of a spline curve by \Bezier projection.}
\end{figure}

Merging or $h$-coarsening is achieved by removing unique knots to
combine two or more knot intervals into a single interval.
\begin{algorithm}\label{alg:h-coarsen-spline}
  Merging for a B-spline or NURBS ($h$-coarsening)
  \begin{enumerate}
  \item Create the target mesh by removing knots.
  \item Perform the \Bezier projection in \cref{alg:multi-to-one-proj}.
  \item Smooth the result
    \begin{equation}
      \vec{P}_A^b = \sum_{e \in \mathsf{E}_A} \omega_A^{e} \vec{P}_A^{e,b}.
    \end{equation}
  \end{enumerate}
\end{algorithm}

To illustrate \cref{alg:h-coarsen-spline} we remove knots
$\sfrac{1}{4}$ and $\sfrac{3}{4}$ from $\{0,0,0,\sfrac{1}{4},
\sfrac{1}{2}, \sfrac{3}{4},1,1,1\}$ to obtain
\begin{equation}
\label{eq:h-coarsen-example-target}
\{ 0,0,0,\sfrac{1}{2},1,1,1 \}.
\end{equation}
The basis defined by this knot vector is shown in the upper left of
\cref{fig:refine-example}. 
The element extraction operators for the first two elements in the source mesh
(knot intervals $[0,\sfrac{1}{4}]$ and $[\sfrac{1}{4},\sfrac{1}{2}]$) are 
\begin{align}
\mat{C}^{e_1}&=\begin{bmatrix}  1 &  0 &  0\\ 0 &  1 &  \sfrac{1}{2}\\
  0 &  0 &
  \sfrac{1}{2}\end{bmatrix},\label{eq:local-ext-ops-h-coarsen-source-1}\\ 
\mat{C}^{e_2}&=\begin{bmatrix}  \sfrac{1}{2} &   0 &   0\\
  \sfrac{1}{2} &   1 &   \sfrac{1}{2}\\  0 &   0 &
  \sfrac{1}{2}\end{bmatrix}\label{eq:local-ext-ops-h-coarsen-source-2} 
\end{align}
and the element extraction and reconstruction operators for the first element of the target mesh
(knot interval $[0,\sfrac{1}{2}]$) that the two source elements are
projected onto are
\begin{align}
\mat{C}^{e_1^{\prime}}&=\begin{bmatrix}  1 &  0 &  0\\ 0 &  1 &
  \sfrac{1}{2}\\ 0 &  0 &  \sfrac{1}{2}\end{bmatrix},\label{eq:local-ext-ops-h-coarsen-target}\\
\mat{R}^{e_1^{\prime}} &=\begin{bmatrix}1 & 0 & 0\\0 & 1 & -1\\0 & 0 & 2\end{bmatrix}.\label{eq:local-rec-ops-h-coarsen-target}
\end{align}
The matrices used to transform the \Bezier coefficients of the source
elements onto the target element are computed by converting the lower
and upper bounds of the target element into the local coordinates of
the source elements. 
For this case, the bounds of the target element map to $-1$ and $3$ on
the first source element $e_1$ and $-3$ and $1$ on the second source
element. 
These values for the upper and lower bounds are used as values for $a$
and $b$ in \cref{eq:inv-interval-op} while $\tilde{a}=-1$ and
$\tilde{b} = 1$ to obtain 
\begin{align}
\mat{A}_1^{-1} &=
\begin{bmatrix}
  1 & 0 & 0 \\
  -1 & 2 & 0 \\
  1 & - 4 & 4
\end{bmatrix},\\
\mat{A}_2^{-1} &=
\begin{bmatrix}
  1 & - 4 & 4\\
  0 & 2 & -1 \\
  0 & 0 & 1
\end{bmatrix}.
\end{align}
Note that $\mat{A}_1^{-1}$ is in fact the inverse of the matrix
$\mat{A}_l$ defined in the previous example of $h$-refinement and
$\mat{A}_2^{-1}$ is the inverse of $\mat{A}_r$. 
Each of the source elements covers half of the target element and so
the weight for the relative contribution of each element to the final
control points is $\phi_1=\phi_2=\frac{1}{2}$. 
Finally, the Gramian matrix for the Bernstein basis of degree 2 is
\begin{equation}
\label{eq:gramian-bern-quad}
\mat{G}=\begin{bmatrix}
  \sfrac{2}{5} & \sfrac{1}{5} & \sfrac{1}{15}\\
\sfrac{1}{5} & \sfrac{4}{15} & \sfrac{1}{5}\\
\sfrac{1}{15} & \sfrac{1}{5} & \sfrac{2}{5}
\end{bmatrix}.
\end{equation}
The expression for the points points on $e_1^{\prime}$ given by \cref{alg:multi-to-one-proj} for this problem is
\begin{equation}
\vec{P}^{e_1^{\prime}} = (\mat{R}^{e^{\prime}_1})^{\trans}\left[
  \frac{1}{2}
  \mat{G}^{-1}\mat{A}_1^{-\trans}\mat{G}(\mat{C}^{e_1})^{\trans}\vec{P}^{e_1}
  + \frac{1}{2}\mat{G}^{-1}
  \mat{A}_2^{-\trans}\mat{G}(\mat{C}^{e_2})^{\trans}\vec{P}^{e_2}
\right] 
\end{equation}
where $\vec{P}^{e_1} = \left\{ \vec{P}_1, \vec{P}_2, \vec{P}_3
\right\}^{\trans}$ and $\vec{P}^{e_2} = \left\{ \vec{P}_2, \vec{P}_3,
  \vec{P}_4 \right\}^{\trans}$. 
The result obtained by completing this process for the remaining
elements and computing the weighted average of the resulting control
points on each element in the target mesh is shown in the lower left
of \cref{fig:refine-example}. 
It can be seen that the coarsened basis cannot fully capture the
original curve (shown in gray for comparison).

\subsection{$h$-adaptivity of T-splines}
Subdivision or $h$-refinement and merging or $h$-coarsening of a
T-spline is achieved by subdividing or merging \Bezier elements in the
T-mesh. 
\begin{algorithm}\label{alg:h-refine-t-spline}
  Subdivision for a T-spline ($h$-refinement)
  \begin{enumerate}
  \item Create the target mesh by adding edges to the T-mesh.
  \item Extend T-junctions so that
    $\mathsf{T}_{\mathrm{ext}}^a\subseteq\mathsf{T}^b_{\mathrm{ext}}$
    and $\mathsf{T}^b$ is analysis-suitable.
  \item Perform the \Bezier projection in \cref{alg:large-to-small-proj}.
  \end{enumerate}
  Because the spline spaces are nested
  the weighted averaging step is not required. 
\end{algorithm}

\begin{algorithm}\label{alg:h-coarsen-t-spline}
  Merging for a T-spline ($h$-coarsening)
  \begin{enumerate}
  \item Create the target mesh by removing edges from the T-mesh.
  \item Modify T-junctions so that
  $\mathsf{T}_{\mathrm{ext}}^b\subseteq\mathsf{T}^a_{\mathrm{ext}}$
  and $\mathsf{T}^b$ is analysis-suitable.
  \item Continue from step 2 of \cref{alg:h-coarsen-spline}.
  \end{enumerate}
\end{algorithm}

We present simple examples of T-spline subdivision and merging
in \cref{fig:ts-project-h-refine}. The original T-mesh is shown in the upper center of the figure.
A randomly generated surface is shown in the lower center of \cref{fig:ts-project-h-refine}.
The refined T-mesh is produced by adding edges in the
upper left and lower right corners of the original T-mesh to obtain the T-mesh shown in the
upper right of the figure. New control point positions are computed
by \Bezier projection.
The original and refined surfaces are compared in the lower right of
the figure using the same convention to distinguish between the two as
in the previous two T-spline examples
(\cref{fig:ts-project-p-refine,fig:ts-project-k-refine}). 
It can be seen that the refined surface coincides exactly with the original surface.
The coarsened mesh is shown in the upper left of the figure.
The coarsened mesh is obtained from the original T-mesh by removing
edges from 3 cells in the upper and lower left corners of the
mesh. The new control point positions are computed by \Bezier projection.
The original and coarsened surfaces are compared in the lower left of \cref{fig:ts-project-h-refine}.
It can be seen that the coarsened surface only approximates the original surface.
\begin{figure}[htb]
  \centering
  \begin{tabular}{ *{3}{c} }
    coarsened T-mesh&original T-mesh&refined T-mesh\\
    \includegraphics[width=2in]{./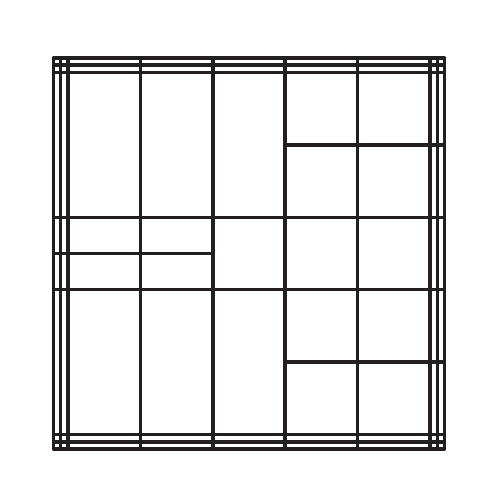}&\includegraphics[width=2in]{./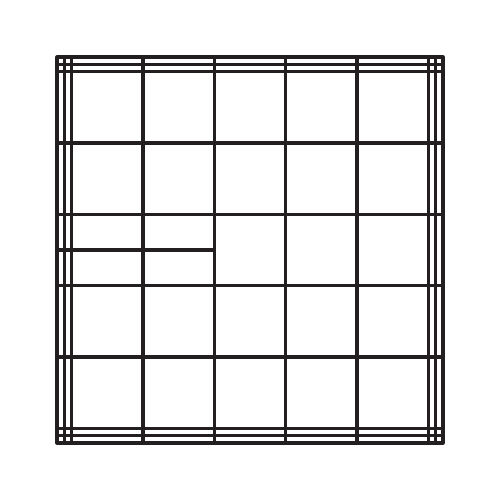}&\includegraphics[width=2in]{./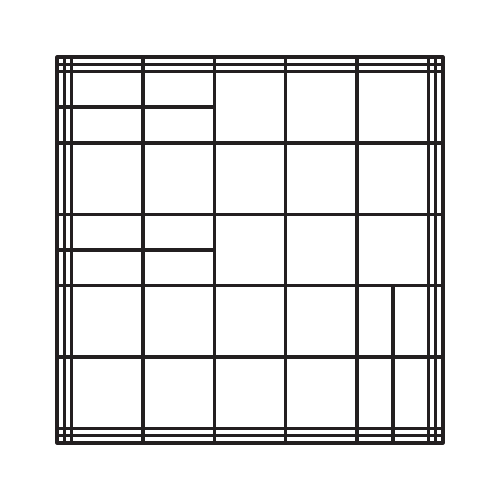}\\ 
    \includegraphics[width=2in]{./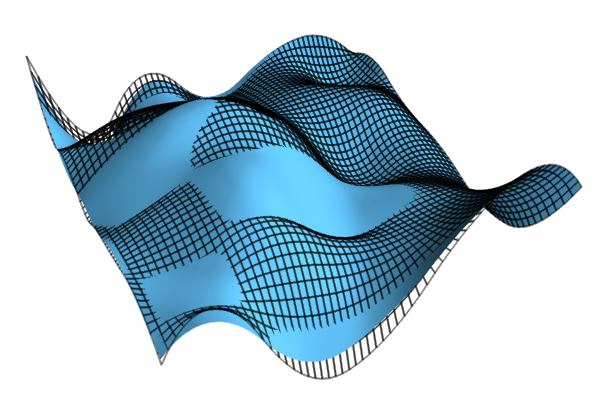}&\includegraphics[width=2in]{./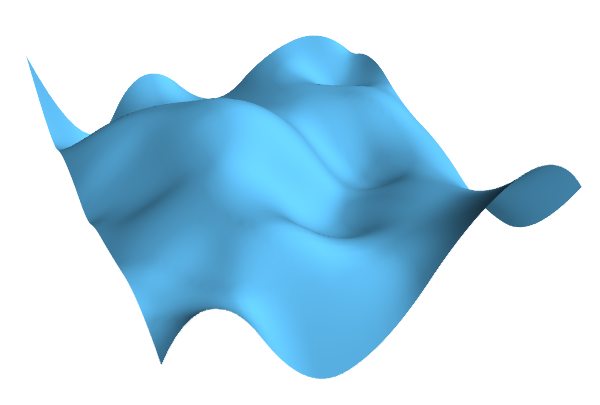}&\includegraphics[width=2in]{./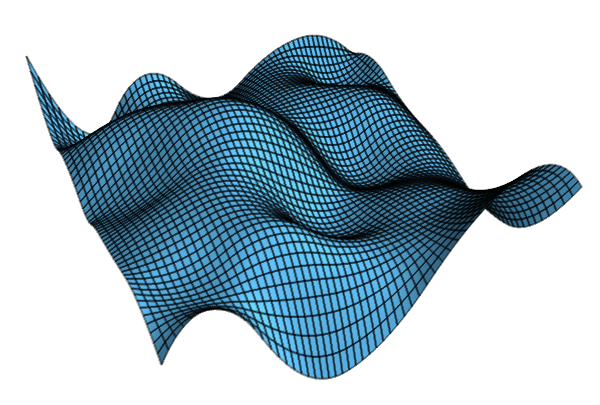}\\
    coarsened surface&original surface&refined surface
  \end{tabular}
  \caption{\label{fig:ts-project-h-refine} T-spline subdivision and
    merging by \Bezier projection.}
\end{figure}
\subsection{$r$-adaptivity of B-splines, NURBS, and T-splines}\label{sec:repar-r-refin}
Reparameterization or $r$-adaptivity in this paper refers to the
process of moving knots or T-mesh edges. 
This can be used to adjust the relative size of elements and hence the
resolution of the mesh while leaving the number of degrees of freedom
unchanged. Reparametrization does not generally
produce nested spaces. Reparameterization requires combined use of
\cref{alg:large-to-small-proj,alg:multi-to-one-proj}. 

\begin{algorithm}\label{alg:reparam}
Reparameterization of B-splines, NURBS, and T-splines ($r$-adaptivity)
  \begin{enumerate}
  \item Create the target mesh by repositioning knots or edges.
  \item Compute a target-to-source element map by enumerating all
    source elements that must be projected onto each target element. 
  \item For each element in the target mesh if the number of elements
    that are to be projected onto the element is greater than one, use
    \cref{alg:multi-to-one-proj}, otherwise use
    \cref{alg:large-to-small-proj}. 
  \item Smooth the result
\begin{equation}
  \vec{P}_A^{b} = \sum_{e \in \mathsf{E}_A} \omega_A^{e} \vec{P}_A^{e,b}.
\end{equation}
  \end{enumerate}
\end{algorithm}

An example of reparameterization is given in
\cref{fig:reparam-example} for a B-spline curve.
The knot vector for the source mesh is 
\begin{equation}
\label{eq:reparam-example-orig}
\{ 0,0,0,\sfrac{1}{2},1,1,1 \}
\end{equation}
and the spline basis defined by this knot vector is shown in the upper center of \cref{fig:reparam-example}.
The reparameterized knot vector is chosen as
\begin{equation}
\label{eq:reparam-example-refine}
\{ 0,0,0,\sfrac{7}{10},1,1,1 \}
\end{equation}
and the associated basis is shown in the upper right of \cref{fig:reparam-example}.
The target-to-source element map is
\begin{equation}
\label{eq:target-to-source-map}
\begin{bmatrix}
  [1,2]\\
[2]
\end{bmatrix}.
\end{equation}
This means that elements 1 and 2 in the source mesh must be projected
onto element 1 in the target mesh and element 2 in the source mesh
must be projected onto element 2 in the target mesh. 
The element extraction operators defined by the source mesh are
\begin{align}
\mat{C}^{e_1}&= \begin{bmatrix}1 & 0 & 0\\0 & 1 & \sfrac{1}{2}\\0 & 0 & \sfrac{1}{2}\end{bmatrix},\\
\mat{C}^{e_2}&= \begin{bmatrix}\sfrac{1}{2} & 0 & 0\\\sfrac{1}{2} & 1 & 0\\0 & 0 & 1\end{bmatrix}.
\end{align}
The element extraction operators defined by the target mesh are
\begin{align}
\mat{C}^{e_1^{\prime}}&= \begin{bmatrix}1 & 0 & 0\\0 & 1 & \sfrac{3}{10}\\0 & 0 & \sfrac{7}{10}\end{bmatrix},\\
\mat{C}^{e_2^{\prime}}&= \begin{bmatrix}\sfrac{3}{10} & 0 & 0\\\sfrac{7}{10} & 1 & 0\\0 & 0 & 1\end{bmatrix}.
\end{align}
The associated reconstruction operators are
\begin{align}
\mat{R}^{e_1^{\prime}}&=\begin{bmatrix}1 & 0 & 0\\0 & 1 & - \frac{3}{7}\\0 & 0 & \frac{10}{7}\end{bmatrix},\\
\mat{R}^{e_2^{\prime}}&=\begin{bmatrix}\frac{10}{3} & 0 & 0\\- \frac{7}{3} & 1 & 0\\0 & 0 & 1\end{bmatrix}.
\end{align}

\begin{figure}[htb]
  \centering
  \begin{tabular}{ *{3}{c}}
    left reparameterization&original curve&right reparameterization\\
    \includegraphics[width=2in]{./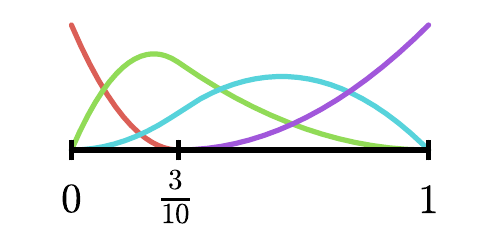}&\includegraphics[width=2in]{./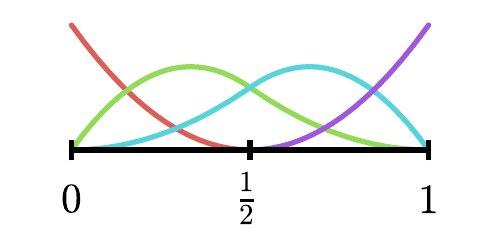}&\includegraphics[width=2in]{./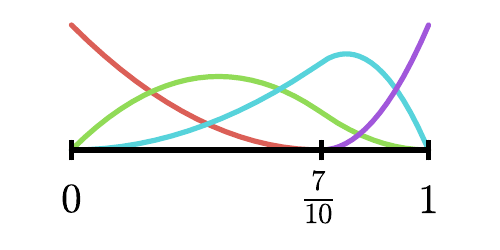}\\
    \includegraphics[width=2in]{./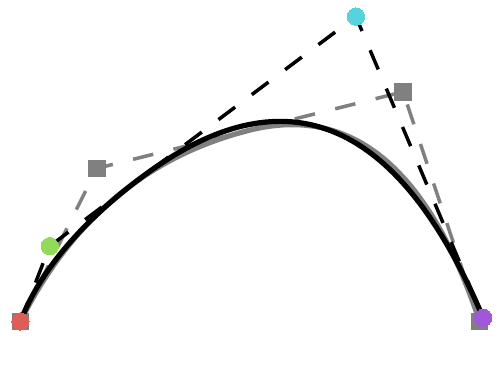}&\includegraphics[width=2in]{./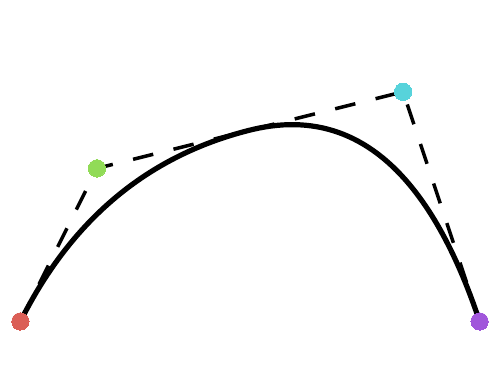}&\includegraphics[width=2in]{./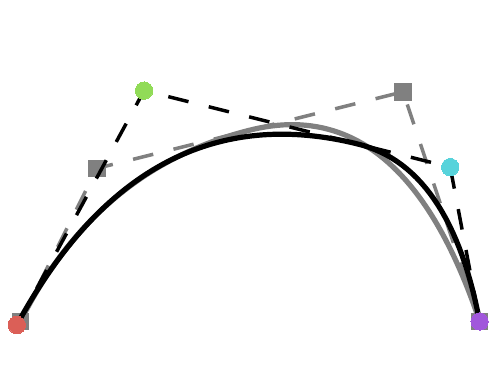}
  \end{tabular}
  \caption{\label{fig:reparam-example}B-spline reparameterization by \Bezier projection.}
\end{figure}

We begin with the projection of the control points associated with
$e_1$ and $e_2$ on the source mesh onto the element $e^{\prime}_1$ on
the target mesh. 
\cref{alg:multi-to-one-proj} must be used because two elements are
being projected onto one element. 
The transformation matrix $\mat{A}_1^{-1}$ that connects the % \Bezier control points for the
Bernstein basis on $e_1$ % into control points for 
to the basis on $e_1^{\prime}$ is found by converting the lower and
upper bounds of $e_1$ to the local coordinates of $e_1^{\prime}$ to
obtain $a = -1$ and $b = \sfrac{3}{7}$. 
Using these values with $\tilde{a}=-1$ and $\tilde{b}=1$ in
\cref{eq:inv-interval-op}, the transformation matrix is 
\begin{equation}
  \mat{A}_1^{-1} = 
  \frac{1}{49}
  \begin{bmatrix}
    1 & 0 & 0 \\
14 & 35 & 0 \\
4 & 20 & 5
  \end{bmatrix}.
\end{equation}
Because the spline segments over $e_1$ and $e_2$ must be projected
onto $e_1^{\prime}$ it is necessary to find an intermediate
representation of the segment from $e_2$ so that
\cref{alg:multi-to-one-proj} can be applied. 
The representation of the segment over the element defined by the
intersection of $e_2$ and $e_1^{\prime}$ is found by multiplying the
\Bezier coefficients on $e_2$ by 
\begin{equation}
\mat{A}^{\prime}=
\begin{bmatrix}
  1 & 0 & 0 \\
  \sfrac{4}{5}  & \sfrac{1}{5} & 0 \\
  \sfrac{16}{25} & \sfrac{8}{25} & \sfrac{1}{25}
\end{bmatrix}.
\end{equation}
This matrix is given by \cref{eq:interval-op} with $a=-1$, $b=1$, $\tilde{a}=-1$, $\tilde{b}=-\sfrac{3}{5}$.
The transformation matrix to convert from the representation over
$e_2\cap e_1^{\prime}$ to $e_1^{\prime}$ is given by
\cref{eq:inv-interval-op} with $a = \sfrac{3}{7}$ and $b = 1$  
\begin{equation}
  \mat{A}_2^{-1} = 
  \frac{1}{4}
  \begin{bmatrix}
 49 & -70 & 25 \\
 0 & 14 & -10 \\
 0 & 0 & 1
  \end{bmatrix}.
\end{equation}
The Bernstein basis is again of degree 2 and so the Gramian is given
by \cref{eq:gramian-bern-quad}. 
The element fractions are $\phi_1 = \sfrac{5}{7}$ and $\phi_2 =
\sfrac{2}{7}$. 
The set of control points associated with element $e_1$ is
$\vec{P}^{e_1} = \{\vec{P}_1, \vec{P}_2, \vec{P}_3\}^{\trans}$ and the
set of control points associated with element $e_2$ is $\vec{P}^{e_2}
= \{\vec{P}_2, \vec{P}_3, \vec{P}_4\}^{\trans}$. 
With this information, the control points on the target element $e_1^{\prime}$ are given by
\begin{equation}
\vec{P}^{e_1^{\prime}} = (\mat{R}^{e_1^{\prime}})^{\trans} \mat{G}^{-1}\left[ \frac{5}{7}\mat{A}_1^{-\trans}\mat{G}(\mat{C}^{e_1})^{\trans}\vec{P}^{e_1} + \frac{2}{7} \mat{A}_2^{-\trans}\mat{G}\mat{A}^{\prime}(\mat{C}^{e_2})^{\trans}\vec{P}^{e_2} \right].
\end{equation}

The control points on element $e^{\prime}_2$ are found by projecting the control points from $e_2$ and so \cref{alg:large-to-small-proj} is appropriate.
The transformation matrix $\mat{A}$ is given by \cref{eq:interval-op} with $a = -1$, $b = 1$, and the element boundaries of $e_2^{\prime}$ are converted to the local coordinates of $e_2$ to obtain $\tilde{a} = -\sfrac{2}{3}$ and $\tilde{b} = 1$:
\begin{equation}
\label{eq:7}
\mat{A} =  
\frac{1}{36}
\begin{bmatrix}
 25 & 10 & 1\\
 0 & 30 & 6\\
 0 & 0 & 36
\end{bmatrix}.
\end{equation}
The control points on $e_2^{\prime}$ are given by
\begin{equation}
  \vec{P}^{e_2^{\prime}} = (\mat{R}^{e_2^{\prime}})^{\trans}\mat{A}(\mat{C}^{e_2})^{\trans}\vec{P}^{e_2}.
\end{equation}
The weighted average of the control points on each element to obtain the global control points is now computed.
The weights for the basis functions shown in the upper right of \cref{fig:reparam-example} for each element are
\begin{align}
\omega_1^{e_1^{\prime}} &= 1, \quad
\omega_2^{e_1^{\prime}} = \frac{13}{16},\quad
\omega_3^{e_1^{\prime}} = \frac{7}{24} \\
\omega_2^{e_2^{\prime}} &= \frac{3}{16},\quad
\omega_3^{e_2^{\prime}} = \frac{17}{24},\quad
\omega_4^{e_2^{\prime}} = 1.
\end{align}
The global control points are thus given by
\begin{align*}
\vec{P}^{\prime}_1 = \vec{P}_1^{e_1^{\prime}},\quad \vec{P}_2^{\prime} = \frac{13}{16}\vec{P}_2^{e_1^{\prime}} + \frac{3}{16}\vec{P}_1^{e_2^{\prime}},\quad
\vec{P}^{\prime}_3 = \frac{7}{24}\vec{P}_3^{e^{\prime}_1} + \frac{17}{24}\vec{P}_2^{e_2^{\prime}},\quad \vec{P}_4^{\prime} = \vec{P}_3^{e_2^{\prime}}.
\end{align*}

The result of this process is illustrated for a simple curve in \cref{fig:reparam-example}.
The original curve and control points are shown in the lower center
and the new curve, produced by the projection of the original control
points onto the reparameterized basis, and control points
are shown in the lower right. Since the spaces are not nested the two curves are
not equal (the source curve is shown in gray on the lower right for
comparison). The reparameterization produced by shifting the center knot to the
left instead of to the right is shown on the left of
\cref{fig:reparam-example} for comparison. 
Because the slope of the curve at the new knot location is better
represented in the left-shifted basis than in the right-shifted basis,
the new curve produced by projection onto the left-shifted basis more
closely approximates the original curve than the right-shifted basis. 

Two simple examples of reparameterization applied to a T-spline are
shown in \cref{fig:ts-project-r-refine}.
\begin{figure}[htb]
  \centering
  \begin{tabular}{ *{3}{c} }
   left shifted T-mesh&original T-mesh&right shifted T-mesh\\
    \includegraphics[width=2in]{./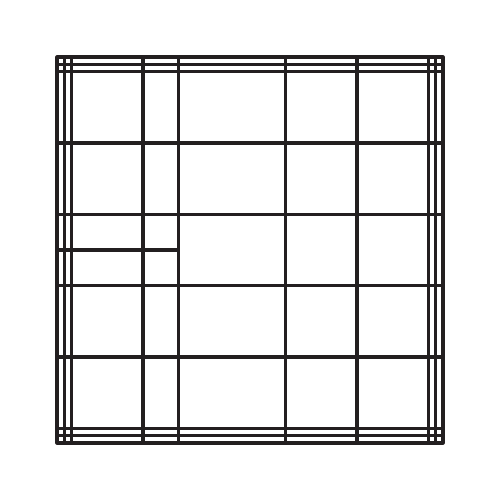}&\includegraphics[width=2in]{./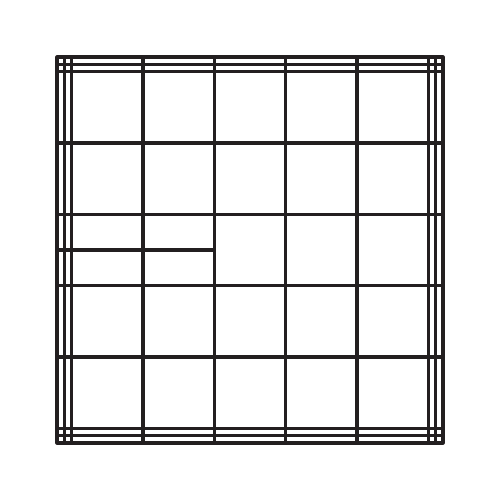}&\includegraphics[width=2in]{./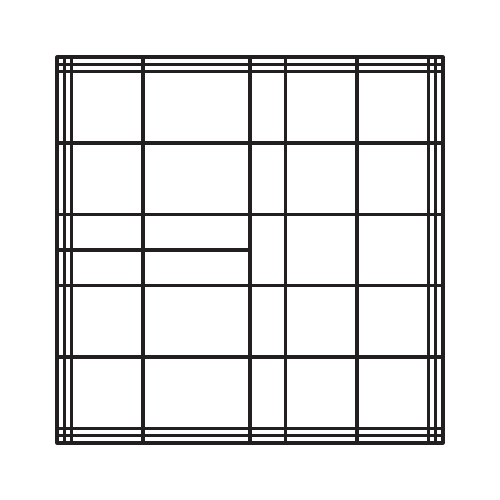}\\
    \includegraphics[width=2in]{./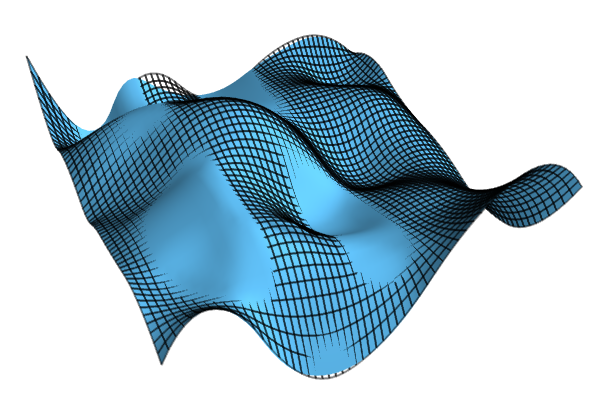}&\includegraphics[width=2in]{./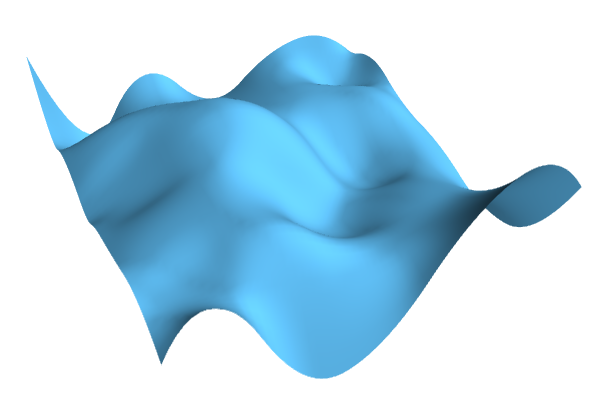}&\includegraphics[width=2in]{./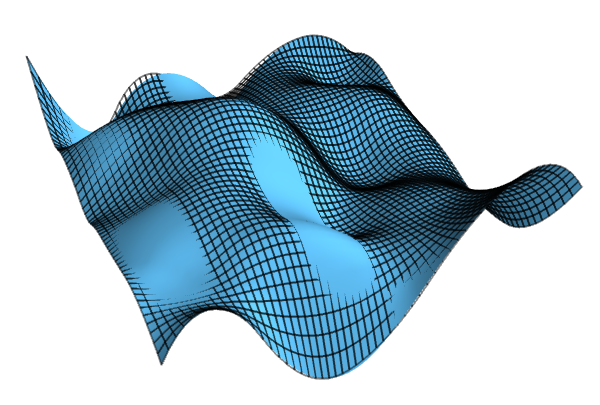}\\
   left shifted surface&original surface&right shifted surface
  \end{tabular}
  \caption{\label{fig:ts-project-r-refine} T-spline reparameterization
    by \Bezier projection.}
\end{figure}
\subsection{Combining $h$-, $p$-, $k$-, and $r$-refinement and coarsening}
Refinement and coarsening by \Bezier projection lends itself well to
the successive application of different refinement and coarsening
procedures. Once the control values have been converted to \Bezier form each
operation is accomplished by the application of the appropriate
matrix. When applying multiple operations it is most efficient to convert to
\Bezier form, apply all necessary transformation matrices to the
Bernstein coefficients, and then convert to spline form and, if
necessary, apply the weighted averaging algorithm as the final
processing step. If any of the operations are not exact then this approch
avoids the accumulation of error associated with each weighted
averaging step. The result will be identical if all of the operations are exact.
\section{Summary and conclusions}
We have presented Bezier projection as a unified approach to local projection, refinement, and coarsening of NURBS and T-splines.
The approach employs a simple three-step procedure, namely, projection onto a local basis, conversion to a global basis, and smoothing using a weighting scheme.
Moreover, the approach relies on the fundamental concept of Bezier extraction and the associated spline reconstruction developed here, resulting in an element-based formulation that may be easily implemented in existing finite element codes.  Optimal convergence rates are proven, and a novel weighting scheme is presented that leads to dramatic improvements in accuracy over previous  approaches.  In fact, by comparing our \cref{fig:govindjee-benchmark} to Fig. 5 in \citet{govindjee2012}, it can be seen that the methodology proposed here produces significantly better results.  Several exemplary applications of Bezier projection are presented to illustrate the accuracy, robustness, and flexibility of the method.

% We have presented \Bezier projection as an accurate, efficient, and
% unified approach to local projection, refinement, and coarsening of
% NURBS and T-splines. Optimal rates of convergence are proven and a novel weighting
% scheme is presented that dramatically improves the accuracy of the
% projection over previous work. Several exemplary applications of the method are presented.
% The fundamental concept of using the \Bezier element extraction operator and
% the spline element reconstruction operator to convert to and from a Bernstein
% representation is fundamental to the method. 
% It can be shown that the local least-squares method of \citet{govindjee2012} is a special case of \Bezier projection.
% By comparing our \cref{fig:convergence} to Fig.\ 5 in
% \citet{govindjee2012}, it can be seen that the weighted averaging proposed here
% provides significantly better results. 
% This difference occurs because simple averaging of the local projection results
% does not respect the relative contribution of a given function over
% different elements and also neglects the potential difference in
% element sizes. 

In the event that data fitting or interpolation is desired, \Bezier
projection is easily modified to accomodate these cases. 
There are efficient and accurate methods for determining the Bernstein
representation of a polynomial interpolating function
\cite{marco2007,marco2010}. 
Once the Bernstein representation has been computed, the spline
segment representation over each element can be computed using the
element reconstruction operator and the weighted average is computed
as before. 

This procedure of computing a Bernstein representation, converting to
a local spline representation, and then smoothing the result to obtain
a smooth global spline representation or approximation provides a
general tool for adapting methods developed for Bernstein polynomials
into methods for splines. 
Indeed, the whole host of methods and techniques developed for
Bernstein polynomials can be applied locally, converted to spline form
and then a smoothed global spline approximation can be computed using
the weighted averaging developed here. 
A retrospective overview of work on Bernstein polynomials is given by
\citet{farouki2012} and many interesting possibilities can be found there and in
the references contained therein.

A unified framework was
developed for quadrature-free degree elevation, degree reduction, knot insertion, knot removal, and
reparameterization of B-splines, NURBS, and T-splines that requires
only element-level information. We feel
that \Bezier projection provides the fundamental building blocks required for
$hpkr$-adaptivity in isogeometric analysis.

\begin{appendices}
\numberwithin{equation}{section}
\crefalias{section}{appsec}
\section{Optimal convergence of the \Bezier projection operator}\label{sec:proof-cont-proj}

In this appendix, we prove that the \Bezier projection operator presented in this paper exhibits optimal convergence rates.  For the sake of brevity, we restrict our discussion to the one-dimensional setting.  By using the methods presented in \cite{BaBeCoHuSa06} and \cite{BeBuSaVa12}, one may extend our theory to the multi-dimensional, rational, and analysis-suitable T-spline settings.  In what follows, $\mathcal{T}$ denotes a univariate B-spline space consisting of $C^{\alpha}$-continuous piecewise polynomials of degree $p$.  We assume that the parametric space $\widehat{\Omega}$ and physical space $\Omega$ coincide, simplifying our exposition.  We additionally employ the simplified notation $\Pi_B$ to represent the \Bezier projection operator $\Pi_B[\mathcal{F},\mathcal{T}] : \mathcal{F} \rightarrow \mathcal{T}$ wherein it is assumed that $\mathcal{F} = L^2(\Omega)$.  Throughout this appendix, we exploit the notion of a \Bezier element.  We denote each \Bezier element using index $e$ and the domain of each \Bezier element using $\Omega^e$, and we define $\mathsf{E}_I$ to be the set of all \Bezier elements.  We will also need the notion of a support extension.  For a \Bezier element $e$, the support extension $\widetilde{\Omega}^e$ is the union of the supports of basis functions whose support intersects $\Omega^e$.

The first ingredient we need in proving optimal convergence of the \Bezier projection operator is approximability.  The following lemma states the local approximation properties of the spline space $\mathcal{T}$, and a proof of the lemma may be found in \cite{BaBeCoHuSa06}.
\begin{lemma}
\label{lemma:approximation}
Let $k$ and $l$ be integer indices with $0 \leq k \leq l \leq p + 1$ and $l \leq \alpha + 1$.  For each \Bezier element $e \in \mathsf{E}_I$, there exists an $s \in \mathcal{T}$ such that
\begin{equation}
| v - s |_{H^k(\widetilde{\Omega}^e)} \leq C_{app} h^{l-k}_e | v |_{H^l(\widetilde{\Omega}^e)}
\end{equation}
where $h_e$ is the mesh size of element $e$, $\widetilde{\Omega}^e$ is the support extension of element $e$, and $C_{app}$ is a constant independent of $h$ but possibly dependent on the shape regularity of the mesh, polynomial degree, continuity, and the parameters $k$ and $l$.
\end{lemma}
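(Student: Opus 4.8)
The plan is to establish the estimate by the classical route for local spline approximation: construct a locally stable, polynomial-reproducing quasi-interpolation operator onto $\mathcal{T}$ and feed it into a Bramble--Hilbert argument carried out on a rescaled copy of the support extension. Because the statement concerns only $\mathcal{T}$, the quasi-interpolant may be chosen independently of the \Bezier projector --- a de Boor--Fix--type quasi-interpolant assembled from dual functionals supported on local knot intervals will do, as would the $L^2$ projection onto the restriction of $\mathcal{T}$ to a fixed patch around $e$ (the operator of \cref{alg:local-proj} would also serve, but an independent choice keeps the argument self-contained).

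First I would fix, for the element $e$, a patch $\omega_e$ built from $e$ and all \Bezier elements within a bounded combinatorial distance --- so $\omega_e \supseteq \widetilde{\Omega}^e$ and $\mathrm{diam}\,\omega_e = O(h_e)$ by the compact support of the B-spline basis --- together with a quasi-interpolant $Q_e$ on $\omega_e$ enjoying \emph{(i) local polynomial reproduction}: if $v|_{\omega_e}$ equals a polynomial $\pi$ of degree at most $p$ then $Q_e v = \pi$ on $\widetilde{\Omega}^e$; and \emph{(ii) local $L^2$-stability}: $\| Q_e v \|_{L^2(\widetilde{\Omega}^e)} \leq C_{\mathrm{stab}} \| v \|_{L^2(\omega_e)}$ with $C_{\mathrm{stab}}$ independent of $h$. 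Reproduction follows from the local completeness and linear independence of the B-spline basis (\cref{lemma:bern-lin-indep} together with the compact-support and local-independence properties). Stability is where shape regularity enters: the dual functionals, equivalently the inverse of the local Gram matrix, have norms controlled by ratios of neighbouring knot spacings, so $C_{\mathrm{stab}}$ depends on the mesh only through its local quasi-uniformity.

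Given $Q_e$, the remaining steps are routine. Rescale $\omega_e$ by $h_e^{-1}$ to an $O(1)$-sized reference patch, so that the Sobolev seminorms transform by the usual powers of $h_e$. On the reference patch, for any polynomial $\pi$ of degree at most $p$ one has $v - Q_e v = (v-\pi) - Q_e(v-\pi)$, hence
\[
| v - Q_e v |_{H^k} \leq | v - \pi |_{H^k} + | Q_e(v-\pi) |_{H^k}.
\]
The second term is bounded, via a polynomial inverse inequality applied to the piecewise polynomial $Q_e(v-\pi)$ and then (ii), by $C C_{\mathrm{stab}} \| v-\pi \|_{L^2}$; taking $\pi$ to be an averaged Taylor polynomial of $v$ of degree $l-1$ (possible since $l \leq p+1$) and applying the Bramble--Hilbert/Deny--Lions lemma gives $\| v-\pi \|_{L^2} + | v-\pi |_{H^k} \leq C | v |_{H^l}$ on the reference patch. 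Undoing the scaling restores the factor $h_e^{\,l-k}$; summing elementwise over $\widetilde{\Omega}^e$ and invoking local quasi-uniformity with the finite overlap of support extensions collapses the bound to $C_{app} h_e^{\,l-k} | v |_{H^l(\widetilde{\Omega}^e)}$, and $s := Q_e v$ is the desired spline. The constraint $l \leq \alpha+1$ enters precisely because $Q_e v$ must match the $C^\alpha$ smoothness of $\mathcal{T}$ at knots of multiplicity $p-\alpha$, and a $C^\alpha$ spline cannot approximate a generic $H^{\alpha+2}$ function to order $h^{\alpha+2}$.

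The main obstacle is the uniform local $L^2$-stability (ii): everything downstream is mechanical, but producing $C_{\mathrm{stab}}$ independent of $h$ requires a quantitative handle on how the local knot configuration controls the dual functionals (or the conditioning of the local Gram and extraction matrices) --- exactly why $C_{app}$ must be permitted to depend on shape regularity. A secondary point is the patch bookkeeping, keeping $\omega_e$ within a bounded combinatorial neighbourhood of $e$ so the overlap count stays bounded. The passage beyond one dimension to rational and analysis-suitable T-spline spaces then follows by the standard devices --- tensor-product/Kronecker bookkeeping, weighting by the smooth, bounded-below NURBS weight function, and the local linear independence and polynomial completeness of ASTS recorded in \cref{thm:ast-lin-indep,thm:ast-complete} --- as noted in the surrounding appendix text.
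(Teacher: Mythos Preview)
The paper does not actually prove this lemma; it simply states the result and refers the reader to \cite{BaBeCoHuSa06} for the proof. Your sketch --- a locally stable, polynomial-reproducing quasi-interpolant fed into a Bramble--Hilbert/Deny--Lions argument on a rescaled patch, with stability of the dual functionals tied to local quasi-uniformity --- is correct and is precisely the strategy employed in that reference, so there is nothing to compare beyond noting that you have supplied what the paper outsources.

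One small remark: your explanation of the constraint $l \leq \alpha+1$ is slightly off. The issue is not that a $C^\alpha$ spline fails to approximate $H^{\alpha+2}$ functions to the expected order in $L^2$ (it does, provided $p$ is large enough); rather, the seminorm on the left is taken over the support extension $\widetilde{\Omega}^e$, which spans several \Bezier elements, and for $s \in \mathcal{T}$ to lie in $H^k(\widetilde{\Omega}^e)$ one needs $k \leq \alpha+1$. The paper (and the cited reference) impose the stronger bound $l \leq \alpha+1$ so that a single statement covers all admissible $k \leq l$; this is a convenience rather than a sharp necessity.
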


\noindent The next two ingredients we need in proving optimal convergence are idempotence and local stability.  The following lemma states these properties for the \Bezier projection operator.  The proof of the lemma is rather involved, so we postpone the proof until later in the appendix.

\begin{lemma}
\label{lemma:continuity}
We have:
\begin{alignat}{10}
\Pi_B (s) &= s, & \hspace{5pt} & \forall s \in \mathcal{T} &&& \hspace{5pt} & \textup{ (spline-preserving property)}\\
\| \Pi_B (v) \|_{L^2(\Omega^e)} &\leq C_{stab} \| v \|_{L^2(\widetilde{\Omega}^e)}, & \hspace{5pt} & \forall v \in L^2(\Omega^e), & \hspace{5pt} & \forall e \in \mathsf{E}_I & \hspace{5pt} & \textup{ (local stability property)}
\end{alignat}
where $C_{stab}$ is a constant independent of $h$ but possibly dependent on the shape regularity of the mesh, polynomial degree, and continuity.
\end{lemma}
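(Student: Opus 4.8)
\emph{Spline-preserving property.} The weights of \cref{eq:weight-def} satisfy $\sum_{e\in\mathsf{E}_A}\omega_A^e=1$ by construction, so by \cref{thm:weight-sum} the operator $\Pi_B$ is a projector. More explicitly, for $s\in\mathcal{T}$ the restriction $s|_{\Omega^e}$ is a polynomial of degree $p$, hence its $L^2$ projection onto $\mathcal{B}^p$ in step~1 of \cref{alg:local-proj} returns $s|_{\Omega^e}$ exactly (\cref{lemma:bern-lin-indep}); step~2 is an exact change of basis; and \cref{thm:same-coeffs} gives $\lambda_A^e(s)=\lambda_A(s)$ for every $e\in\mathsf{E}_A$, so the weighted average in \cref{eq:bez-proj-global-coeffs} returns $\lambda_A(s)$. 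Thus $\Pi_B(s)=s$. This part is immediate from results already in the paper.

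\emph{Local stability: reformulation.} The first key observation is that combining steps~1 and 2 of \cref{alg:local-proj} on an element $e'$ computes the $L^2(\Omega^{e'})$-orthogonal projection $P_{e'}v$ of $v$ onto the space of polynomials of degree $p$ on $\Omega^{e'}$ and expresses the result in the \emph{local} spline basis $\{N_A|_{\Omega^{e'}}\}$. Indeed, $\vec{\beta}^{e'}(v)=\mat{G}^{-1}\vec{b}$ from \cref{eq:bernstein-proj} is the $L^2$ projection of $v\circ\phi_{e'}$ onto the Bernstein basis on the biunit interval, which pulls back through the affine map $\phi_{e'}$ (of constant Jacobian $h_{e'}/2$) to $P_{e'}v$; and \cref{eq:elem-lambda-def} together with the definition of the extraction operator shows that $\sum_A\lambda_A^{e'}(v)\,N_A|_{\Omega^{e'}}$ represents the same function. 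Since the $p+1$ functions $N_A|_{\Omega^{e'}}$ are linearly independent (\cref{lemma:elem-ext-inv}) and span a space of dimension $p+1$, they form a basis of $\mathcal{B}^p$, so $\vec{\lambda}^{e'}(v)$ is precisely the coordinate vector of $P_{e'}v$ in this basis.

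\emph{Local stability: the estimates.} Orthogonal projection is $L^2$-stable, so $\|P_{e'}v\|_{L^2(\Omega^{e'})}\le\|v\|_{L^2(\Omega^{e'})}$. By equivalence of norms on the finite-dimensional space $\mathcal{B}^p$, pulled back to the biunit interval and rescaled by the Jacobian, $|\vec{\lambda}^{e'}(v)|\le C\,h_{e'}^{-1/2}\|P_{e'}v\|_{L^2(\Omega^{e'})}\le C\,h_{e'}^{-1/2}\|v\|_{L^2(\Omega^{e'})}$, where $C$ depends only on $p$, the continuity, and---through the admissible local knot configurations---the shape regularity of the mesh. Since $\omega_A^{e'}\in[0,1]$ with $\sum_{e'\in\mathsf{E}_A}\omega_A^{e'}=1$, \cref{eq:bez-proj-global-coeffs} gives $|\lambda_A(v)|\le\max_{e'\in\mathsf{E}_A}|\lambda_A^{e'}(v)|\le C\max_{e'\in\mathsf{E}_A}h_{e'}^{-1/2}\|v\|_{L^2(\Omega^{e'})}$. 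Finally, on $\Omega^e$ only a $p$-dependent (uniformly bounded) number of basis functions are nonzero, so using the triangle inequality, $0\le N_A\le 1$, and $|\Omega^e|=h_e$, we get $\|\Pi_B(v)\|_{L^2(\Omega^e)}\le\sum_A|\lambda_A(v)|\,\|N_A\|_{L^2(\Omega^e)}\le C\,h_e^{1/2}\max_A\max_{e'\in\mathsf{E}_A}h_{e'}^{-1/2}\|v\|_{L^2(\Omega^{e'})}$. For each $A$ whose support meets $\Omega^e$ and each $e'\in\mathsf{E}_A$, shape regularity yields $h_e\simeq h_{e'}$ and $\Omega^{e'}\subseteq\widetilde{\Omega}^e$, so every term is bounded by $C\|v\|_{L^2(\widetilde{\Omega}^e)}$, which is the claimed local stability bound with $C_{stab}$ depending only on $p$, continuity, and shape regularity.

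\emph{Main obstacle.} The only nontrivial point is the uniform control of the norm-equivalence constant $C$ above, equivalently of $\|\mat{R}^{e'}\|=\|(\mat{C}^{e'})^{-1}\|$: for an arbitrary mesh the reconstruction operators need not be uniformly bounded. Under the shape-regularity hypothesis, however, the set of admissible local knot configurations (up to scaling) is compact, so $\|\mat{R}^{e'}\|$ is uniformly bounded in terms of $p$, continuity, and the shape-regularity parameter; the remaining ingredients---finite overlap of supports, the inclusion $\Omega^{e'}\subseteq\widetilde{\Omega}^e$, and comparability of neighboring element sizes---are routine mesh bookkeeping.
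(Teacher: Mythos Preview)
Your proof is correct and follows essentially the same route as the paper: bound the local spline coefficients $\lambda_A^{e'}(v)$ by $C\,h_{e'}^{-1/2}\|v\|_{L^2(\Omega^{e'})}$, use $|\omega_A^{e'}|\le 1$, exploit $0\le N_A\le 1$ (partition of unity), and invoke shape regularity to compare neighboring element sizes and absorb the support extension. The only cosmetic difference is that the paper obtains the coefficient bound by explicitly tracking $\vec{b}\to\boldsymbol{\beta}^{e'}\to\boldsymbol{\lambda}^{e'}$ through the Gramian inverse and the scale-invariance of $\mat{C}^{e'}$, whereas you package the same content as ``$L^2$-stability of orthogonal projection plus norm equivalence on a finite-dimensional space, with the equivalence constant controlled by compactness of admissible local knot configurations''; and the paper finishes with a pointwise bound on $|\Pi_B(f)|$ followed by integration, while you apply the triangle inequality directly in $L^2$.
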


\noindent Using \cref{lemma:approximation,lemma:continuity} we can state the convergence properties of the \Bezier projection operator.

\begin{theorem}
\label{theorem:optimal}
Let $k$ and $l$ be integer indices with $0 \leq k \leq l \leq p + 1$ and $l \leq \alpha + 1$.  For each \Bezier element $e \in \mathsf{E}_I$, the following inequality holds:
\begin{equation}
\| f - \Pi_B (f) \|_{H^k(\Omega^e)} \leq C_{int} h_e^{l-k} \| f \|_{H^l(\widetilde{\Omega}^e)}, \hspace{15pt} \forall f \in H^l(\widetilde{\Omega}^e)
\end{equation}
where $h_e$ is the mesh size of element $e$, $\widetilde{\Omega}^e$ is the support extension of element $e$, and $C_{int}$ is a constant independent of $h_e$ but possibly dependent on the shape regularity of the mesh, polynomial degree, continuity, and the parameters $k$ and $l$.
\end{theorem}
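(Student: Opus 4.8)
The plan is to run the classical quasi-interpolation / Bramble--Hilbert argument for spline spaces, exactly as in \cite{BaBeCoHuSa06}, feeding it the three ingredients that have been isolated above: local approximability (\cref{lemma:approximation}), the spline-preserving (idempotence) property, and local $L^2$-stability (\cref{lemma:continuity}). Fix a \Bezier element $e$. Using \cref{lemma:approximation} I would choose a near-best spline approximant $s \in \mathcal{T}$ on the support extension $\widetilde{\Omega}^e$; concretely I need $s$ to satisfy the estimate of \cref{lemma:approximation} both for the index pair $(k,l)$ and for $(0,l)$. Since $\Pi_B$ is linear and $\Pi_B s = s$, on $\Omega^e$ we have the splitting $f - \Pi_B f = (f - s) - \Pi_B(f - s)$.

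From this identity,
\begin{equation}
\| f - \Pi_B f \|_{H^k(\Omega^e)} \leq \| f - s \|_{H^k(\Omega^e)} + \| \Pi_B(f - s) \|_{H^k(\Omega^e)} .
\end{equation}
The first term is controlled directly by \cref{lemma:approximation} (index pair $(k,l)$), giving a bound $C_{app} h_e^{l-k} | f |_{H^l(\widetilde{\Omega}^e)}$. For the second term I would use the key structural fact that on the interior of a single \Bezier element the spline basis is $C^\infty$, so the restriction of any element of $\mathcal{T}$ --- in particular $\Pi_B(f-s)$ --- to $\Omega^e$ is a polynomial of degree at most $p$. A standard inverse inequality on an element of size $h_e$ then yields $\| \Pi_B(f-s) \|_{H^k(\Omega^e)} \leq C_{inv} h_e^{-k} \| \Pi_B(f-s) \|_{L^2(\Omega^e)}$. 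Applying the local stability bound of \cref{lemma:continuity} gives $\| \Pi_B(f-s) \|_{L^2(\Omega^e)} \leq C_{stab} \| f - s \|_{L^2(\widetilde{\Omega}^e)}$, and a final use of \cref{lemma:approximation} with index pair $(0,l)$ bounds this by $C_{stab} C_{app} h_e^{l} | f |_{H^l(\widetilde{\Omega}^e)}$.

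Collecting the two contributions produces $\| f - \Pi_B f \|_{H^k(\Omega^e)} \leq (1 + C_{inv} C_{stab}) C_{app} h_e^{l-k} | f |_{H^l(\widetilde{\Omega}^e)} \leq C_{int} h_e^{l-k} \| f \|_{H^l(\widetilde{\Omega}^e)}$, which is the claimed inequality; the constant $C_{int}$ inherits dependence only on $p$, $\alpha$, the shape regularity, and $k,l$. The support extension $\widetilde{\Omega}^e$ appears on the right-hand side precisely because both the stability estimate and the approximation estimate are posed there. The multi-dimensional, rational, and analysis-suitable T-spline versions then follow by the tensor-product and pull-back arguments of \cite{BaBeCoHuSa06,BeBuSaVa12}, as announced at the start of the appendix.

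The real obstacle is not in this assembly --- which is routine once the ingredients are in hand --- but in establishing \cref{lemma:continuity}, in particular the local $L^2$-stability of $\Pi_B$ \emph{including} its weighted-averaging step; that is why its proof is deferred. Within the present argument the only genuinely delicate points are: verifying that the inverse inequality is applied on the correct (\Bezier) element, so that its constant is truly $h$-independent under the shape-regularity assumption; and ensuring that the form of \cref{lemma:approximation} invoked really supplies a \emph{single} $s$ that is near-best simultaneously in the $H^k$ and $L^2$ norms on $\widetilde{\Omega}^e$ (the simultaneous-in-$k$ statement established in \cite{BaBeCoHuSa06}).
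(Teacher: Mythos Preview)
Your proposal is correct and follows essentially the same route as the paper: split $f-\Pi_B f=(f-s)-\Pi_B(f-s)$ using the spline-preserving property, bound the first piece by \cref{lemma:approximation}, and bound the second piece via an inverse inequality on the polynomial element followed by the local stability from \cref{lemma:continuity} and a second application of \cref{lemma:approximation}, arriving at the same constant $C_{int}=C_{app}(1+C_{inv}C_{stab})$. Your additional remarks about the inverse inequality being applied on the \Bezier element and about needing a single $s$ valid for both the $(k,l)$ and $(0,l)$ estimates are accurate refinements that the paper leaves implicit.
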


\begin{proof}
Let $s \in \mathcal{T}$ be as in Lemma \ref{lemma:approximation}.  Then, by Lemma \ref{lemma:continuity},
\begin{align}
| v - \Pi_B (v) |_{H^k(\Omega^e)} & = | v - s - \Pi_B \left( v - s \right) |_{H^k(\Omega^e)} \nonumber \\
& \leq \left| v - s \right|_{H^k(\Omega^e)} + \left| \Pi_B \left( v - s \right) \right|_{H^k(\Omega^e)} \nonumber \\
& = I + II \nonumber
\end{align}
By Lemma \ref{lemma:approximation}, we immediately have
\begin{equation}
I \leq C_{app} h_e^{l - k}  \| f \|_{H^l(\widetilde{\Omega}^e)}. \nonumber
\end{equation}
The standard inverse inequality for polynomials yields
\begin{equation}
II \leq C_{inv} h_e^{-k} \left\| \Pi_B \left( v - s \right) \right\|_{L^2(\Omega^e)} \nonumber
\end{equation}
where $C_{inv}$ is a constant which only depends on polynomial degree.  By Lemma \ref{lemma:continuity}, we then have
\begin{equation}
II \leq C_{inv} C_{stab} h_e^{-k} \left\| v - s \right\|_{L^2(\widetilde{\Omega}^e)} \leq C_{inv} C_{stab} C_{app} h_e^{l - k} \| f \|_{H^l(\widetilde{\Omega}^e)}. \nonumber
\end{equation}
Thus the theorem holds with $C_{int} = C_{app} \left(1  + C_{inv} C_{stab} \right)$.
\end{proof}

We now return to the proof of Lemma \ref{lemma:continuity}.  While the
spline-preserving property holds trivially, the local stability
property is more technical to establish.  To proceed, we will need the
results of the following two lemmata.  The first lemma concerns the
stability of the local \Bezier element extraction operator and its
inverse while the second concerns the stability of local
$L^2$-projection onto the Bernstein basis. 

\begin{lemma}
For each \Bezier element $e \in \mathsf{E}_I$, the norm of the element extraction operator $\textup{\textbf{C}}^e$ (and its inverse) is independent of the mesh size $h$ but possibly dependent on the shape regularity of the mesh, polynomial degree, and continuity.
\end{lemma}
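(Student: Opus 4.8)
The plan is to exploit the fact that the element extraction operator $\mat{C}^e$ is a \emph{scale-invariant} object: it is completely determined by the local knot configuration of the $p+1$ B-splines whose support meets $\hat{\Omega}^e$, and an affine rescaling of the knot vector leaves $\mat{C}^e$ unchanged. First I would dispose of the elementary half of the statement, the bound on $\mat{C}^e$ itself. \Bezier extraction is nothing but knot insertion carried to full multiplicity and then restricted to a single element, so each \Bezier coefficient is a convex combination of the spline coefficients; hence $\mat{C}^e$ has nonnegative entries, and because both the B-spline basis on $\hat{\Omega}^e$ and the Bernstein basis are partitions of unity, its columns sum to one. Thus $\|\mat{C}^e\|_1 = 1$, and by equivalence of norms on $\mathbb{R}^{(p+1)\times(p+1)}$ the norm of $\mat{C}^e$ in any fixed norm is bounded by a constant depending only on $p$, in particular independent of $h$.

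The substantive part is the bound on $\mat{R}^e = (\mat{C}^e)^{-1}$, whose entries do grow as the local knot spans become disproportionate. Here I would set up a compactness argument. Map $\hat{\Omega}^e$ affinely onto the biunit interval; the knots of the local knot vectors of the basis functions active on $e$ are then encoded by finitely many nonnegative normalized span lengths together with a combinatorial pattern recording which of these knots coincide, i.e.\ the multiplicity structure. There are only finitely many such patterns, their number bounded in terms of $p$ and the continuity $\alpha$; and for each fixed pattern, the shape-regularity hypothesis---a two-sided bound $\theta^{-1}\le h_e/h_{e'}\le \theta$ on the sizes of adjacent elements, iterated across the $O(p)$ elements of the support extension $\widetilde{\Omega}^e$---confines the admissible normalized span lengths to a product of closed bounded intervals, hence to a compact set $\mathcal{K}_{p,\alpha,\theta}$. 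The entries of $\mat{C}^e$ are continuous, indeed piecewise polynomial, functions on $\mathcal{K}_{p,\alpha,\theta}$, as is visible from the knot-insertion algorithm used to compute them \cite{Borden:2010nx, ScBoHu10}.

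I would then invoke \cref{lemma:elem-ext-inv}: for \emph{every} configuration in $\mathcal{K}_{p,\alpha,\theta}$ the matrix $\mat{C}^e$ is invertible, so $\det \mat{C}^e$ is a continuous, nowhere-vanishing function on the compact set $\mathcal{K}_{p,\alpha,\theta}$ and therefore $|\det \mat{C}^e|\ge \delta > 0$ with $\delta$ depending only on $p$, $\alpha$ and $\theta$. Cramer's rule now writes each entry of $\mat{R}^e$ as a cofactor of $\mat{C}^e$---bounded via the first paragraph---divided by $\det \mat{C}^e$, which is bounded below by $\delta$. This yields $\|\mat{R}^e\| \le C(p,\alpha,\theta)$, independent of $h$, which is the assertion of the lemma.

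I expect the main obstacle to be making the compactness set-up airtight rather than any single hard estimate: one must enumerate the finitely many admissible multiplicity patterns, check that $\mat{C}^e$ depends continuously on the normalized knots \emph{including on the degenerate boundary strata} where permitted knots merge, and verify that shape regularity genuinely bounds all relevant span ratios away from $0$ and $\infty$ over the whole support extension, not merely between immediate neighbours. An alternative would be to factor $\mat{R}^e$ explicitly as a finite product of elementary knot-removal operators and estimate each factor under local quasi-uniformity, but tracking that product cleanly for general $p$ and continuity is more delicate, so I would favour the compactness route.
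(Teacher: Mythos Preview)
Your proposal is correct, and its kernel---scale invariance of $\mat{C}^e$ under affine rescaling of the knot vector---is exactly what the paper invokes. However, the paper's proof is a single sentence: it simply asserts that the element extraction operators are invariant under constant scalings of the domain $\Omega$ and declares the lemma a consequence. Your argument supplies what the paper leaves implicit: the column-stochasticity observation giving $\|\mat{C}^e\|_1=1$ outright, and, more importantly, the compactness step (finitely many multiplicity patterns, shape regularity confining normalized span ratios to a compact set, continuity and nonvanishing of $\det\mat{C}^e$ there via \cref{lemma:elem-ext-inv}, hence a uniform lower bound and a Cramer-rule bound on $\mat{R}^e$). This compactness argument is precisely what is needed to pass from ``scale-invariant for a fixed configuration'' to ``uniformly bounded over all admissible configurations,'' and the paper does not spell it out. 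Your self-identified obstacles---continuity of $\mat{C}^e$ across multiplicity strata and propagation of the shape-regularity bound across the full support extension---are the right places to be careful, but neither is a genuine obstruction in the one-dimensional setting of \cref{sec:proof-cont-proj}.
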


\begin{proof}
The lemma is a consequence of the fact that the element extraction operators for a given B-spline space are invariant under constant scalings of the domain $\Omega$.
\end{proof}

\begin{lemma}
For each \Bezier element $e \in \mathsf{E}_I$, the local Bernstein coefficient vector $\boldsymbol{\beta}^e$ associated with the local $L^2$-projection of a function $f \in L^2(\Omega^e)$ onto the space of polynomials of degree $p$ satisfies the inequality
\begin{equation}
\left\| \boldsymbol{\beta}^e \right\|_{\infty} \leq \frac{C_p}{h_e^{1/2}} \| f \|_{L^2(\Omega^e)}
\end{equation}
where $h_e$ is the mesh size of element $e$ and $C_p$ is a constant only dependent upon the polynomial degree $p$.
\end{lemma}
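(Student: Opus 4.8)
The plan is to exploit the closed-form representation of the local Bernstein projection. By \cref{eq:bernstein-proj-sol-mat} the coefficient vector is $\boldsymbol{\beta}^e = \mat{G}^{-1}\vec{b}$, where $\mat{G}$ is the Gramian of the Bernstein basis over the \emph{fixed} biunit interval and hence depends only on $p$ (indeed $\mat{G}$ and $\mat{G}^{-1}$ are available in closed form through \cref{eq:gramian-expr,eq:inv-gramian-expr}), and $\vec{b}$ has entries $b_i = (B_i^p,\, f\circ\phi_e) = \int_{-1}^{1} B_i^p(\xi)\, f(\phi_e(\xi))\,d\xi$. All of the $h_e$-dependence therefore lives in $\vec{b}$, and the proof reduces to two steps: controlling $\mat{G}^{-1}$ by a $p$-dependent constant, and showing that $\|\vec{b}\|_{\infty}$ scales like $h_e^{-1/2}\|f\|_{L^2(\Omega^e)}$.

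For the first step, since $\mat{G}^{-1}$ is a fixed $(p+1)\times(p+1)$ matrix, its induced $\infty$-norm (the maximal absolute row sum) is a finite constant $C^{(1)}_p$ depending only on $p$, and a componentwise estimate gives $\|\boldsymbol{\beta}^e\|_{\infty} = \|\mat{G}^{-1}\vec{b}\|_{\infty} \le C^{(1)}_p\,\|\vec{b}\|_{\infty}$. For the second step, I bound each entry of $\vec{b}$ by the Cauchy--Schwarz inequality on $[-1,1]$, so that $|b_i| \le \|B_i^p\|_{L^2(-1,1)}\,\|f\circ\phi_e\|_{L^2(-1,1)}$. The first factor equals $\sqrt{G^p_{ii}}$ by \cref{eq:gramian-entries}, again a $p$-dependent constant, and I set $C^{(2)}_p = \max_{1\le i\le p+1}\sqrt{G^p_{ii}}$. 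For the second factor I change variables by the affine map $\phi_e\colon[-1,1]\to\hat{\Omega}^e$, whose Jacobian is the constant $h_e/2$ (recall $\Omega=\hat{\Omega}$ throughout this appendix); this yields $\|f\circ\phi_e\|_{L^2(-1,1)}^2 = \tfrac{2}{h_e}\int_{\Omega^e}|f|^2\,dx = \tfrac{2}{h_e}\|f\|_{L^2(\Omega^e)}^2$. Combining the pieces, $\|\vec{b}\|_{\infty} \le C^{(2)}_p\sqrt{2/h_e}\,\|f\|_{L^2(\Omega^e)}$, so the lemma holds with $C_p = \sqrt{2}\,C^{(1)}_p C^{(2)}_p$, a constant depending only on $p$.

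The argument is essentially routine bookkeeping; the only step that requires any care is the change of variables, where one must track the $h_e/2$ Jacobian of $\phi_e$ and keep in mind that $\mat{G}$ is taken over the reference (biunit) interval rather than over $\Omega^e$ itself --- this mismatch is exactly what generates the $h_e^{-1/2}$ factor. If one wished to allow a non-affine reparameterization $\phi_e$, one would in addition need two-sided bounds on $\phi_e'$, but these are subsumed by the shape-regularity dependence already permitted in the statement, and for B-splines the canonical affine choice makes them unnecessary.
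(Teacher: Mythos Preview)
Your proof is correct and follows essentially the same approach as the paper: both arguments write $\boldsymbol{\beta}^e=\mat{G}^{-1}\vec{b}$, bound $\|\mat{G}^{-1}\|_\infty$ by a $p$-dependent constant, and then show $\|\vec{b}\|_\infty\le C\,h_e^{-1/2}\|f\|_{L^2(\Omega^e)}$ via the change of variables $\xi\mapsto\phi_e(\xi)$. The only minor difference is that the paper bounds $|b_i|$ using the pointwise estimate $0\le B_i^p\le 1$ followed by H\"older ($L^1\!\to\!L^2$), whereas you apply Cauchy--Schwarz directly; both routes yield the same $h_e^{-1/2}$ scaling and a $p$-only constant.
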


\begin{proof}
The vector of coefficients $\vec{\beta}^e = [\beta^{p,e}_i]$ are defined through the equation:
\begin{equation}
\vec{\beta}^e = \textbf{G}^{-1}\textbf{b} \nonumber
\end{equation}
where $\textbf{G} = [G_{ij}]$ with
\begin{equation}
G_{ij} = \int_{-1}^1 B^p_i(\xi) B^p_j(\xi) d\xi \nonumber
\end{equation}
 and $\textbf{b} = [b_i]$ with 
\begin{equation}
b_i = \int_{-1}^1 B^p_i(\xi) \left( f \circ \phi_e^{-1} \right) (\xi) d\xi. \nonumber
\end{equation}
In the above equation, $\phi_e : [-1,1] \rightarrow \Omega^e$ is the standard affine map from the biunit interval onto element $e$.  Since $B^p_i \leq 1$, we have that
\begin{equation}
\| \textbf{b} \|_{\infty} \leq \int_{-1}^1 \left|\left( f \circ \phi_e^{-1} \right) (\xi) \right| d\xi = \| f \circ \phi_e^{-1} \|_{L^1([-1,1])} = 2h_e^{-1} \| f \|_{L^1(\Omega^e)} \leq 2h_e^{-1/2} \| f \|_{L^2(\Omega^e)}. \nonumber
\end{equation}
The last inequality above follows from H\"{o}lder's inequality.  Moreover, by scaling, we have that  $\| \textbf{G}^{-1} \|_{\infty} \leq C_G$ where $C_{G}$ is a constant only dependent upon the polynomial degree $p$.  The desired result immediately follows with $C_p = 2C_{G}$.
\end{proof}

\noindent With the preceding two lemmata established, we are now in a position to complete the proof of Lemma \ref{lemma:continuity}.

\begin{proof}[Proof of Lemma \ref{lemma:continuity}]
The spline-preserving property holds trivially.  Hence, it remains to prove the local stability property.  Let $e \in \mathsf{E}_I$ and $f \in L^2(\Omega^e)$.  We have that:
\begin{equation}
\Pi_B (f)|_{\Omega^e} = \sum_{A \in I_{e}} \left[ \sum_{e' \in E_A} \omega_A^{e'} \lambda_A^{e'}(f) \right] N_A \nonumber
\end{equation}
where $I_{e}$ is the index set of all B-spline basis functions whose support overlaps with element $e$.  For each $e' \in E_A$, the weights $\omega_A^{e'}$ satisfy $|\omega_A^{e'}| \leq 1$ and the vector of coefficients $\boldsymbol{\lambda}^{e'} = [\lambda_A^{e'}(f)]$ is defined by
\begin{equation}
\boldsymbol{\lambda}^{e'} = (\mat{C}^{e'})^{-\trans} \boldsymbol{\beta^{e'}}(f). \nonumber
\end{equation}
where $\boldsymbol{\beta}^{e'}$ is the local Bernstein coefficient vector associated with the local $L^2$-projection of $f$ onto element $e'$.  The results of the previous two lemmas dictate that
\begin{equation}
\left\| \boldsymbol{\lambda}^{e'} \right\|_{\infty} \leq \frac{C_{\lambda}}{h_{e'}^{1/2}} \| f \|_{L^2(\Omega^{e'})} \nonumber
\end{equation}
where $h_{e'}$ is the mesh size of element $e'$ and $C_{\lambda}$ is a constant which only depends on the polynomial degree $p$, the continuity $\alpha$, and the shape regularity of the parametric mesh.  Since the B-spline basis functions are positive and form a partition of unity, we consequently have that
\begin{align}
|\Pi_B (f)| & \leq \left|\sum_{A \in I_{e}} \left[ \sum_{e' \in E_A} w_A^{e'} \lambda_A^{e'}(f) \right] N_A\right| \nonumber \\
& \leq \left|\sum_{A \in I_e} \left[ \sum_{e' \in E_A} \frac{C_{\lambda}}{h_{e'}^{1/2}} \| f \|_{L^2(\Omega^{e'})} \right] N_A\right| \nonumber \\
& \leq \max_{A \in I_e} \left[ \sum_{e' \in E_A} \frac{C_{\lambda}}{h_{e'}^{1/2}} \| f \|_{L^2(\Omega^{e'})} \right] \left|\sum_{A \in I_e} N_A\right| \nonumber \\
& \leq \max_{A \in I_e} \left[ \sum_{e' \in E_A} \frac{C_{\lambda}}{h_{e'}^{1/2}} \| f \|_{L^2(\Omega^{e'})} \right] \nonumber \\
& \leq C_{reg} \frac{C_{\lambda}}{h_e^{1/2}} \| f \|_{L^2(\widetilde{\Omega}^e)} \nonumber
\end{align}
over element $e$ where $C_{reg}$ is a constant only dependent on the shape regularity of the mesh and $\widetilde{\Omega}^e$ is the support extension of element $e$.
Therefore:
\begin{align}
\| \Pi_B (f) \|_{L^2(\Omega^e)} & = \left( \int_{\Omega_{e}} |\Pi_B (f)|^2 d\xi \right)^{1/2} \nonumber \\
& \leq C_{reg} \frac{C_{\lambda}}{h_e^{1/2}} \| f \|_{L^2(\widetilde{\Omega}^e)} h_e^{1/2} \nonumber \\
& = C_{reg} C_{\lambda} \| f \|_{L^2(\widetilde{\Omega}^e)}. \nonumber
\end{align}
Thus the local stability result holds with $C_{stab} = C_{reg} C_{\lambda}$.
\end{proof}
\end{appendices}
\bibliographystyle{elsarticle-harv}
%\bibliography{../isopapers/bibliography/bibliography}
%\bibliography{../bibliography/bibliography}
\bibliography{./refs}
% Emacs 24.2.1 (Org mode 8.0.3)
\end{document}